\newtheorem{thm}{Theorem}[section]
\newtheorem{lem}[thm]{Lemma}
\newtheorem{pro}[thm]{Proposition}
\newtheorem{defi}[thm]{Definition}
\newtheorem{obs}[thm]{Observation}
\newenvironment{ack}{\noindent{\bf Acknowledgments}}
\newcommand{\vol}{{\rm vol}}
\newcommand{\cs}{{\rm cs}}
\newcommand{\li}{{\rm Li}_2}
\newcommand{\modulo}{~~({\rm mod}~\pi^2)}
\begin{document}

\title{Quandle theory and the optimistic limits of the representations of link groups
}
\author{\sc Jinseok Cho}
\maketitle
\begin{abstract}
When a boudnary-parabolic representation of a link group to PSL(2,$\mathbb{C}$) is given, Inoue and Kabaya suggested a combinatorial method 
to obtain the developing map of the representation
using the octahedral triangulation and the shadow-coloring of certain quandle. Quandle is an algebraic system closely related with
the Reidemeister moves, so their method changes quite naturally under the Reidemeister moves.

In this article, we apply their method to the potential function, which was used to define the optimsitic limit,
and construct a saddle point of the function. This construction works for any boundary-parabolic representation,
and it shows that the octahedral triangulation is good enough to study all possible boundary-parabolic representations of the link group.
Furthermore the evaluation of the potential function at the saddle point
becomes the complex volume of the representation, and this saddle point changes naturally under the Reidemeister moves
because it is constructed using the quandle.
\end{abstract}

\section{Introduction}\label{sec1}

A link $L$ has the hyperbolic structure when there exists a discrete faithful representation
$\rho:\pi_1(L)\rightarrow{\rm PSL}(2,\mathbb{C})$, where {\it the link group} $\pi_1(L)$ is the fundamental group of the link complement $\mathbb{S}^3\backslash L$.
The standard method to find the hyperbolic structure of $L$ is to consider some triangulation of $\mathbb{S}^3\backslash L$ and
solve certain set of equations. (These equations are called {\it the hyperbolicity equations}.) 
Each solution determines a boundary-parabolic representation\footnote{
{\it Boundary-parabolic} means the image of the peripheral subgroup $\pi_1(\partial(\mathbb{S}^3\backslash L))$ is a parabolic subgroup of PSL(2, $\mathbb{C}$). Note that the geometric representation is boundary-parabolic.}
 and one of them is {\it the geometric representation}, which means
the determined boundary-parabolic representation is discrete and faithful.
Due to Mostow's rigidity theorem, the hyperbolic structure of a link is a topological property.
Therefore, {it is natural to expect the invariance of the hyperbolic structure under the Reidemeister moves.}
However, this could not be seen easily because, even small {change} on the triangulation changes the solution radically.

Recently, Inoue and Kabaya, in \cite{Kabaya14}, developed a method to construct the hyperbolic structure of $L$
using the link diagram and the geometric representation. More generally, when a boundary-parabolic representation $\rho$
is given, they constructed the explicit geometric shapes of the tetrahedra of certain triangulation
using $\rho$. Their main method is to construct the geometric shapes using certain quandle homology,
which is defined directly from the link diagram $D$ and the representation $\rho$.
Here, quandle is an algebric system whose axioms are closely related with the Reidemeister moves of link diagrams,
so their construction changes quite naturally under the Reidemeister moves.
(The definition of the quandle is in Section \ref{sec21}. A good survey of quandle is the book \cite{quandle}.)
{ The result \cite{Kabaya14} suggests} a combinatorial method to obtain 
the hyperbolic structure of the link complement.

Interestingly, the triangulation they used in \cite{Kabaya14} was also used to define the optimistic limit
of the Kashaev invariant {in} \cite{Cho13a}. 
As a matter of fact, this triangulation arises naturally from the link diagram.
(See Section 3 of \cite{Weeks05} and Section \ref{triang} of this article for the definition.)
We call this triangulation {\it octahedral triangulation} of $\mathbb{S}^3\backslash(L\cup\{\text{two points}\})$
associated with the link diagram $D$.

The optimistic limit first was appeared in \cite{Kashaev95} when Kashaev proposed the volume conjecture.
This conjecture relates certain limits of link invariants, called Kashaev invariants, with the hyperbolic volumes. 
The optimistic limit{, which was first defined in \cite{Murakami00a},} is the value of certain potential function evaluated at a saddle point,
where the function and the value are expected to be an analytic continuation of the Kashaev invariant and
the limit of the invariant, respectively. 
As a matter of fact, physicists usually call the evaluation {\it the classical limit} 
and consider it {\it the actual limit} of the invariant. 
{ Mathematically rigorous definition of the optimistic limit was proposed in \cite{Yokota10}
and the value was proved to coincide with the hyperbolic volume.}
The author and several others developed several versions of the optimistic limit
in many articles, but we will modify the version of \cite{Cho13a} in this article {so as to construct a solution without solving equations}. 


The optimistic limit is defined by the potential function $V(z_1,\ldots,z_n,w_k^j,\ldots)$. 
Previously, in \cite{Cho13a}, this function was defined purely by the link diagram, but here we modify it using the information of
the representation $\rho$. (The definition is in Section \ref{sec3}.) We consider a solution of the following set
\begin{equation*}
\mathcal{H}:=\left\{\left.\exp(z_k \frac{\partial V}{\partial z_k})=1,~
  \exp(w_k^j\frac{\partial V}{\partial w_k^j})=1\right|j:\text{degenerate  crossings}, ~k=1,\ldots,n\right\},
\end{equation*}
which is a saddle-point of the potential function $V$. Then {Proposition \ref{pro1}} will show that $\mathcal{H}$
becomes the hyperbolicity equations of the octahedral triangulation. 

{Solving the equations in $\mathcal{H}$ is not easy because there are infinitely many solutions.
The standard way to avoid this difficulty is to deform the octahedral triangulation of $\mathbb{S}^3\backslash(L\cup\{\text{two points}\})$ 
to the triangulation of $\mathbb{S}^3\backslash L$, as in \cite{Yokota10}.
However, this deformation produces the problem of the existence of solutions because some triangulation constructed from a link diagram 
may have no solution. (A recent paper \cite{Yokota16} proved the existence of solutions for the alternating links.)
Furthermore, the author believes these deformation of the triangulation loses the combinatorial properties of link diagrams.
Therefore, we will use the octahedral triangulation without any deformation and do not solve the equations in $\mathcal{H}$.
Instead,}
we will construct an explicit solution $(z_1^{(0)},\ldots,z_n^{(0)},(w_k^j)^{(0)},\ldots)$ of $\mathcal{H}$.
{
\begin{thm} There exists a formula to construct a solution $(z_1^{(0)},\ldots,z_n^{(0)},(w_k^j)^{(0)},\ldots)$ of $\mathcal{H}$
by using the quandle associated with the representation $\rho$. (The exact formulas are in {Theorem \ref{thm1}.})
\end{thm}}

{ The evaluation of the potential function $V$ depends on the the choice of log-branch. 
To obtain a well-defined value, modify the potential function to}
\begin{eqnarray*}
\lefteqn{V_0(z_1,\ldots,z_n,(w_k^j),\ldots):=V(z_1,\ldots,z_n,(w_k^j),\ldots)}\label{V0} \\
&&-\sum_k\left(z_k \frac{\partial V}{\partial z_k}\right)\log z_k
-\sum_{ j, k}\left(w_k^j \frac{\partial V}{\partial w_k^j}\right)\log w_k^j.
\end{eqnarray*}

{
\begin{thm} For the constructed solution $(z_1^{(0)},\ldots,z_n^{(0)},(w_k^j)^{(0)},\ldots)$ of $\mathcal{H}$
and the modified potential function $V_0$ above, the following holds:
\begin{equation}\label{optimistic}
  V_0(z_1^{(0)},\ldots,z_n^{(0)},(w_k^j)^{(0)},\ldots)\equiv i(\vol(\rho)+i\,\cs(\rho))~~({\rm mod}~\pi^2),
\end{equation}
where $\vol(\rho)$ and $\cs(\rho)$ are the hyperbolic volume and the Chern-Simons invariant of $\rho$
defined in \cite{Zickert09}, respectively. 
\end{thm}}

The proof will be in Theorem \ref{thm2}.
The left-hand side of (\ref{optimistic}) is called {\it the optimistic limit of $\rho$},
and $\vol(\rho)+i\,\cs(\rho)$ in the right-hand side is called {\it the complex volume of $\rho$}.

Note that for any boundary-parabolic representation $\rho$, we can always construct the solution associated with $\rho$.
This implies that the octahedral triangulation is good enough for the study of all possible boundary-parabolic representations
from the link group to ${\rm PSL}(2,\mathbb{C})$. The set of all possible representations can be regarded as
{\it the Ptolemy variety} (see \cite{Zickert15b} for detail) and we expect the octahedral triangulation will be
very useful to the study of the Ptolemy variety. (Actual application to the Ptolemy variety is in preparation now.)

Furthermore, the construction of the solution is based on the quandle in \cite{Kabaya14}.
Therefore, this solution changes locally under the Reidemeister moves.
This implies that we can explore the hyperbolic structure of a link by finding the solution
and keeping track of the changes of the solution under the Reidemeister moves.
As a matter of fact, after the appearance of the first draft of this article, this idea was successfully
{ in \cite{Cho14c}, \cite{Cho15a} and more applications} are in preparation.

Among the applications, we remark that the article \cite{Cho14c} contains very similar results with this article.
Both articles construct the solution associated with $\rho$ using the same quandle.
However, the major differences are the triangulations.
Both uses the same {\it octahedral decomposition} of $\mathbb{S}^3\backslash(L\cup\{\text{two points}\})$,
but this article uses the subdivision of each octahedron into four tetrahedra and call the result 
{\it four-term (or octahedral) triangulation}, 
whereas the article \cite{Cho14c} uses the subdivision of the same octahedron into five tetrahedra and call the result {\it five-term triangulation}.
Some {tetrahedra} in the four-term triangulation can be degenerate and this introduces technical difficulties.
However, the five-term triangulation used in \cite{Cho14c} does not contain any degenerate {tetrahedra},
so it is far easier and convenient. (That is why this article is three times longer than \cite{Cho14c}.)
As a conclusion, this article contains the original idea of using quandle to construct the solution
and the article \cite{Cho14c} improved the idea.

This article consists of the following contents. In Section \ref{sec2}, we will summarize some results of \cite{Kabaya14}.
Especially, the definition of the quandle and the octahedral triangulation will appear.
Section \ref{sec3} will define the optimistic limit and the hyperbolicity equations.
The main formula (Theorem \ref{thm2}) of the solution associated with the given representation $\rho$ will appear.
Section \ref{sec4} will discuss two simple examples, the figure-eight knot $4_1$ and the trefoil knot $3_1$.

\section{Quandle}\label{sec2}

In this section, we will survey some results of the article \cite{Kabaya14}. 
We remark that all formulas of this section come from \cite{Kabaya14} and the author learned them
from the series lectures of Ayumu Inoue given at
Seoul National University during spring of 2012.

\subsection{Conjugation quandle of parabolic elements}\label{sec21}

\begin{defi}A \textbf{quandle} is a set $X$ with a binary operation $*$ satisfying the following three conditions:
\begin{enumerate}
  \item $a*a=a$ for any $a\in X$,
  \item the map $*b:X\rightarrow X \;(a\mapsto  a*b)$ is bijective for any $b\in X$,
  \item  $(a*b)*c=(a*c)*(b*c)$ for any $a,b,c\in X$.
\end{enumerate}
\end{defi}

The inverse of $*b$ is notated by $*^{-1}b$. In other words, the equation $a*^{-1}b=c$ is equivalent to $c*b=a$.

\begin{defi} Let $G$ be a group and $X$ be a subset of $G$ satisfying 
  \begin{equation*}
    g^{-1}Xg=X \text{ for any }g\in G.
  \end{equation*}
Define the binary operation $*$ on $X$ by
\begin{equation}\label{eq1}
  a*b=b^{-1} a b
\end{equation}
for any $a,b\in X$. Then $(X,*)$ becomes a quandle and is called the \textbf{conjugation quandle}.
\end{defi}

As an example, let $\mathcal{P}$ be the set of parabolic elements of ${\rm PSL}(2,\mathbb{C})={\rm Isom^+}(\mathbb{H}^3)$. 
Then 
  $$g^{-1}\mathcal{P}g=\mathcal{P}$$
holds for any $g\in {\rm PSL}(2,\mathbb{C})$. Therefore, $(\mathcal{P},*)$ is a conjugation quandle,
and this is the only quandle we are using in this article.

To perform concrete calculations, explicit expression of $(\mathcal{P},*)$ was introduced {in} \cite{Kabaya14}.
At first, note that 

$$\left(\begin{array}{cc}p & q \\r & s\end{array}\right)^{-1}
  \left(\begin{array}{cc}1 & 1 \\0 & 1\end{array}\right)
  \left(\begin{array}{cc}p & q \\r & s\end{array}\right)
  =\left(\begin{array}{cc}1+rs & s^2 \\-r^2 & 1-rs\end{array}\right),$$
for $\left(\begin{array}{cc}p & q \\r & s\end{array}\right)\in{\rm PSL}(2,\mathbb{C})$.
Therefore, we can identify $(\mathbb{C}^2\backslash\{0\})/\pm$ with $\mathcal{P}$  by
\begin{equation}\label{eq2}
  \left(\begin{array}{cc}\alpha & \beta\end{array}\right)
  \longleftrightarrow\left(\begin{array}{cc}1+\alpha\beta & \beta^2 \\-\alpha^2 & 1-\alpha\beta\end{array}\right),
\end{equation}
where $\pm$ means the equivalence relation 
  $\left(\begin{array}{cc}\alpha & \beta\end{array}\right)\sim\left(\begin{array}{cc}-\alpha & -\beta\end{array}\right)$.
We define the operation $*$ on $\mathcal{P}$ by
\begin{eqnarray*}
  \left(\begin{array}{cc}\alpha & \beta\end{array}\right)*  \left(\begin{array}{cc}\gamma & \delta\end{array}\right)
  :=\left(\begin{array}{cc}\alpha & \beta\end{array}\right)
  \left(\begin{array}{cc}1+\gamma\delta &\delta^2 \\   -\gamma^2& 1-\gamma\delta\end{array}\right)
  \in(\mathbb{C}^2\backslash\{0\})/\pm,
\end{eqnarray*}
where the matrix multiplication on the right-hand side is the standard multiplication. 
(This definition is the transpose of the one used in \cite{Kabaya14} and \cite{Cho14c}.)
Note that this definition coincides with the operation of the conjugation quandle $(\mathcal{P},*)$ by
\begin{eqnarray*}
  &&\left(\begin{array}{cc}\alpha & \beta\end{array}\right)*  \left(\begin{array}{cc}\gamma & \delta\end{array}\right)
  = \left(\begin{array}{cc}\alpha & \beta\end{array}\right)
  \left(\begin{array}{cc}1+\gamma\delta &\delta^2 \\   -\gamma^2& 1-\gamma\delta\end{array}\right)
  \in(\mathbb{C}^2\backslash\{0\})/\pm\\
  &&\longleftrightarrow  \left(\begin{array}{cc}1+\gamma\delta &\delta^2 \\   -\gamma^2& 1-\gamma\delta\end{array}\right)^{-1}
  \left(\begin{array}{cc}1+\alpha\beta & -\alpha^2 \\ \beta^2& 1-\alpha\beta\end{array}\right)
   \left(\begin{array}{cc}1+\gamma\delta &\delta^2 \\   -\gamma^2& 1-\gamma\delta\end{array}\right)\\
  &&~~~~~= \left(\begin{array}{cc}\gamma & \delta\end{array}\right)^{-1}\left(\begin{array}{cc}\alpha & \beta\end{array}\right)
     \left(\begin{array}{cc}\gamma & \delta\end{array}\right)\in{\rm PSL}(2,\mathbb{C}).
\end{eqnarray*}
The inverse operation is given by
$$\left(\begin{array}{cc}\alpha & \beta\end{array}\right)*^{-1}  \left(\begin{array}{cc}\gamma & \delta\end{array}\right)
  = \left(\begin{array}{cc}\alpha & \beta\end{array}\right)
  \left(\begin{array}{cc}1-\gamma\delta & -\gamma^2 \\ \delta^2 & 1+\gamma\delta\end{array}\right).$$
From now on, we use the notation $\mathcal{P}$ instead of $(\mathbb{C}^2\backslash\{0\})/\pm$.

\subsection{Link group and shadow-coloring}\label{sec22}

Consider a representation $\rho:\pi_1(L)\rightarrow {\rm PSL}(2,\mathbb{C})$ of a hyperbolic link $L$.
We call $\rho$ {\it boundary-parabolic} when the peripheral subgroup $\pi_1(\partial (\mathbb{S}^3\backslash L))$ of $\pi_1(L)$
maps to a subgroup of ${\rm PSL}(2,\mathbb{C})$ whose elements are all parabolic.

 For a fixed oriented link diagram\footnote{
We always assume the diagram does not contain a trivial knot component which has only over-crossings or under-crossings
or no crossing. (For example, any unseparable link diagram satisfies this condition.)
If it happens, then we change the diagram of the trivial component slightly.
For example, applying Reidemeister second move to make different types of crossings 
or Reidemeister first move to add a kink is good enough. 
This assumption is necessary to guarantee that
the octahedral triangulation becomes a topological triangulation of $\mathbb{S}^3\backslash(L\cup\{\text{two points}\})$} $D$ of $L$, Wirtinger presentation gives an algorithmic expression of $\pi_1(L)$.
For each arc $\alpha_k$ of $D$, we draw a small arrow labelled $a_k$ as in Figure \ref{pic01}, which presents a loop. 
(The details are in \cite{Rolfsen}. Here we are using the opposite orientation of $a_k$
to be consistent with the operation of the conjugation quandle.)
This loop corresponds to one of the meridian curves of the boundary tori, so $\rho(a_k)$ is
an element in $\mathcal{P}$. Hence we call $\{\rho(a_1),\ldots,\rho(a_n)\}$ {\it arc-coloring}\footnote{
Strictly speaking, arc-coloring is a map from arcs of $D$ to $\mathcal{P}$, not a set. 
(Region-coloring, which will be defined below, is also a map from regions of $D$ to $\mathcal{P}$.) However, 
we abuse the set notation here for convenience.} of $D$,
whereas each $\rho(a_k)$ is assigned to the corresponding arc $\alpha_k$.

\begin{figure}[h]
\centering
  \includegraphics[scale=0.45]{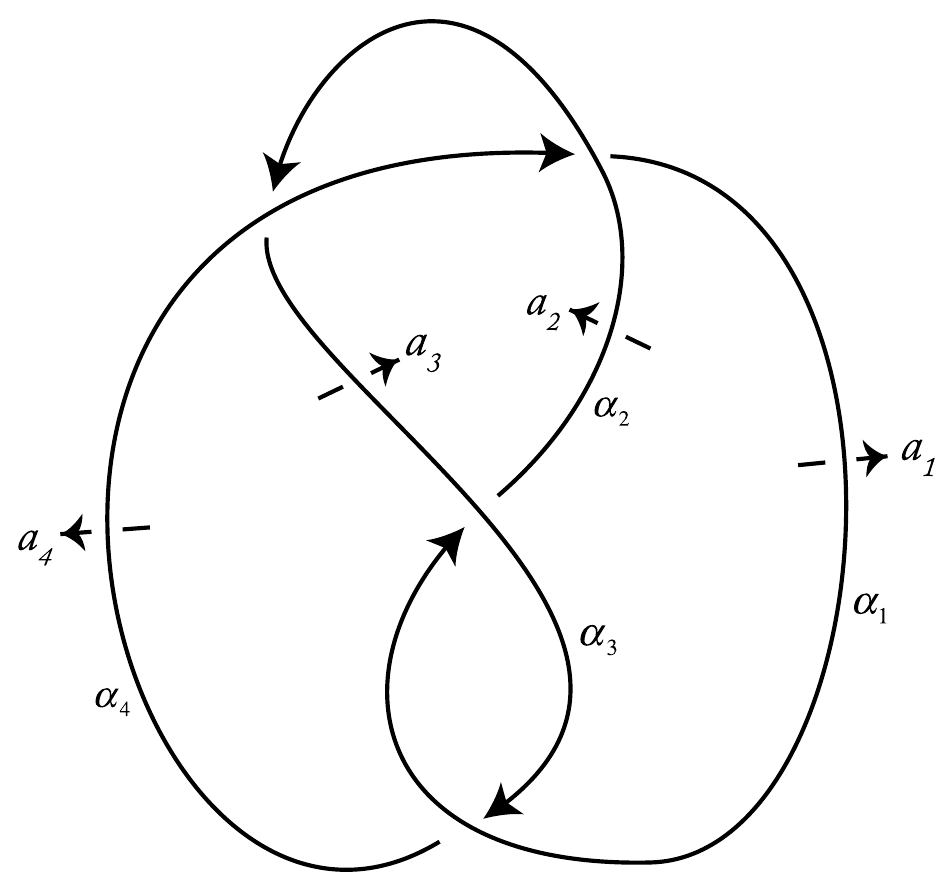}
  \caption{The figure-eight knot $4_1$}\label{pic01}
\end{figure}

Wirtinger presentation of the link group is given by
$$\pi_1(L)=<a_1,\ldots,a_n;r_1,\ldots,r_n>,$$
where the relation $r_l$ is assigned to each crossing as in Figure \ref{pic02}. Note that $r_l$ coincides with (\ref{eq1}),
so we can write down relation of the arc-colors as in Figure \ref{pic03}.

\begin{figure}[h]
\centering
\subfigure[$r_l : a_{l+1}=a_k^{-1} a_l a_k$]{
  \includegraphics[scale=0.4]{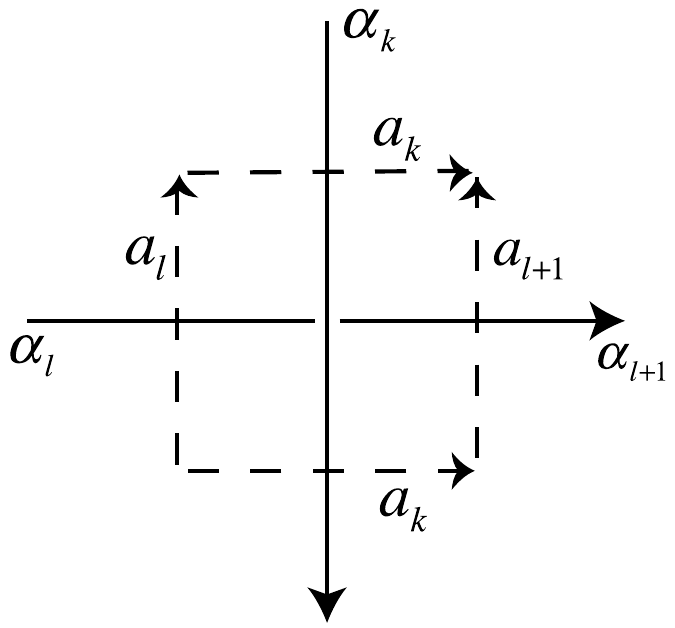}}\hspace{1cm}
  \subfigure[$r_l : a_l=a_k^{-1} a_{l+1}a_k$]{
  \includegraphics[scale=0.4]{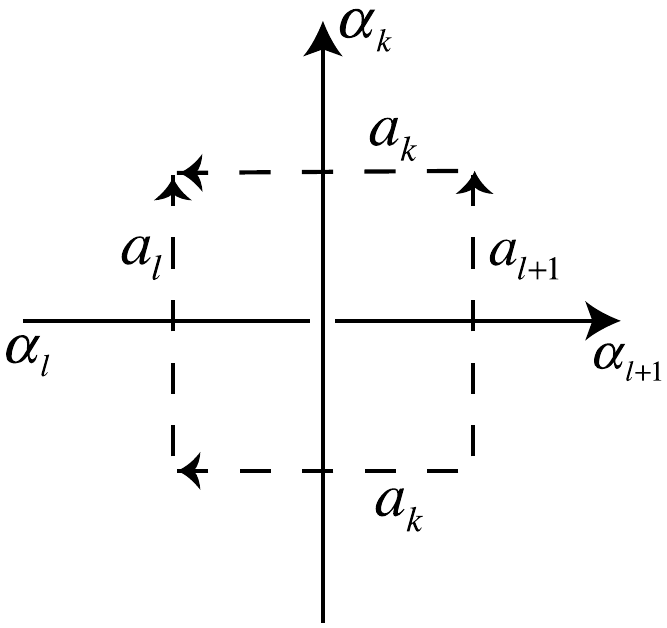} } 
  \caption{Relations at crossings}\label{pic02}
\end{figure}

\begin{figure}[h]
\centering  \setlength{\unitlength}{0.6cm}\thicklines
\begin{picture}(6,6)  
    \put(6,5){\vector(-1,-1){4}}
    \put(2,5){\line(1,-1){1.8}}
    \put(4.2,2.8){\line(1,-1){1.8}}
    \put(6.2,5.2){$\rho(a_k)$}
    \put(1.3,5.2){$\rho(a_l)$}
    \put(4.5,0.3){$\rho(a_l)*\rho(a_k)$}
  \end{picture}
  \caption{Arc-coloring}\label{pic03}
\end{figure}
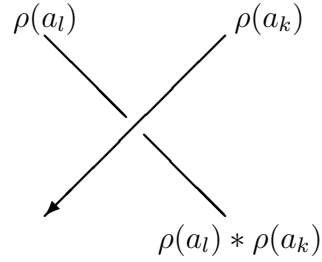

From now on, we always assume $\rho:\pi_1(L)\rightarrow {\rm PSL}(2,\mathbb{C})$
 is a given boundary-parabolic representation. 
To avoid redundant notations, {arc-coloring will be} denoted by $\{a_1,\ldots,a_n\}$ without { indicating} $\rho$ from now on.
Choose an element {$s_f\in\mathcal{P}$} corresponding to {a} region of the diagram $D$ and determine
{$s_1,s_2,\ldots,s_m\in\mathcal{P}$ corresponding to each regions} using the relation in Figure \ref{pic04}.

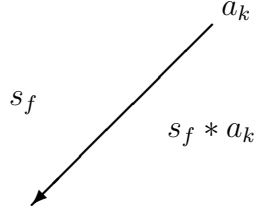
\begin{figure}[h]
\centering  \setlength{\unitlength}{0.6cm}\thicklines
\begin{picture}(6,5)  
    \put(6,4){\vector(-1,-1){4}}
    \put(1.5,2.2){$s_f$}
    \put(5,1.5){$s_f*a_k$}
    \put(6.2,4.2){$a_k$}
  \end{picture}
  \caption{Region-coloring}\label{pic04}
\end{figure}

The assignment of elements of $\mathcal{P}$ to all regions using the relation in Figure \ref{pic04} is called {\it the region-coloring}. This assignment is well-defined
because the two curves in Figure \ref{pic05}, which we call {\it the cross-changing pair}, determine the same region-coloring, and 
any pair of curves with the same starting and ending points can be transformed each other by finite sequence of cross-changing pairs.

\begin{figure}[h]
\centering  \setlength{\unitlength}{0.9cm}\thicklines
\subfigure[Positive crossing]{
  \includegraphics[scale=0.8]{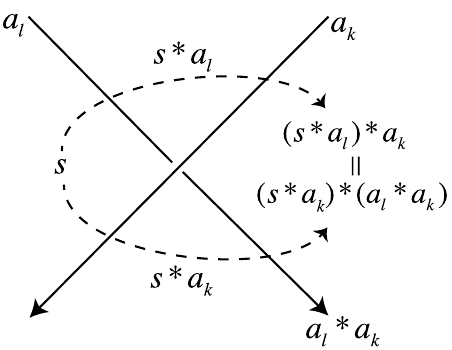}}  \hspace{1cm}
\subfigure[Negative crossing]{
  \includegraphics[scale=0.8]{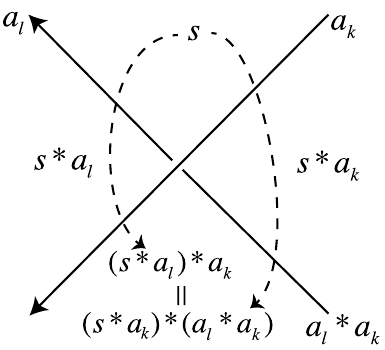}}  
  \caption{Well-definedness of region-coloring}\label{pic05}
\end{figure}

An arc-coloring together with a region-coloring is called {\it an shadow-coloring}.
The following lemma shows important property of shadow-colorings, which is crucial for showing the existence of solutions of certain equations.

\begin{defi}\label{Hopf} 
The \textbf{Hopf map} $h:\mathcal{P}\longrightarrow \mathbb{CP}^1=\mathbb{C}\cup\{\infty\}$ is defined by
$$\left(\begin{array}{cc}\alpha &\beta\end{array}\right)\mapsto \frac{\alpha}{\beta}.$$
Note that $h\left(\begin{array}{cc}\alpha &\beta\end{array}\right)=\frac{\alpha}{\beta}$ 
is the fixed point of the M\"{o}bius transformation $f(z)=\frac{(1+\alpha\beta)z-\alpha^2}{\beta^2 z+(1-\alpha\beta)}$.
\end{defi}

\begin{lem}\label{lem1} Let $L$ be a link and assume an arc-coloring is already given 
by the boundary-parabolic representation $\rho:\pi_1(L)\longrightarrow {\rm PSL}(2,\mathbb{C})$.
{Then there exists a region-coloring such that, 
for any edge of the link diagram with its arc-color $a_k$ ($k=1,\ldots,n$) and its surrounding region-colors {$s_f, s_f*a_k$} 
(see Figure \ref{pic04}), the following holds:}
\begin{equation}\label{regcon}
  h(a_k)\neq h({s_f})\neq h({s_f}*a_k)\neq h(a_k)
\end{equation}
{holds}.
\end{lem}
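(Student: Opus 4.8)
The plan is to produce the desired region-coloring by making one good choice at a single region and propagating it by the rule in Figure \ref{pic04}; the only thing to verify is that the finitely many inequalities \eqref{regcon}, one for each crossing of $D$, can be satisfied simultaneously. First I would fix the arc-coloring $\{a_1,\dots,a_n\}$ coming from $\rho$ and observe that, once a color $s_0\in\mathcal P$ is assigned to one base region, the color of every other region is a fixed word in $s_0$ and the $a_k$'s under the operation $*$; concretely, if $R$ is obtained from the base region by crossing arcs colored $b_1,\dots,b_m$ (with signs), then its color is $s_0 *^{\pm} b_1 *^{\pm}\cdots *^{\pm} b_m$, which, written in the $(\mathbb C^2\setminus\{0\})/\pm$ model, is $M_R\, s_0$ for an explicit matrix $M_R\in\mathrm{SL}(2,\mathbb C)$ depending only on the arc-coloring. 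So the whole region-coloring is the single vector-valued function $s_0\mapsto (M_R s_0)_R$, and each condition in \eqref{regcon} at a given crossing becomes a condition of the form ``$h(M_R s_0)\neq h(a_k)$'' or ``$h(M_R s_0)\neq h(M_{R'} s_0)$'' for the two regions $R,R'$ adjacent to that crossing across the over-arc colored $a_k$.

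Next I would show each of these forbidden conditions cuts out a \emph{proper} algebraic subset of the space of choices for $s_0$, i.e.\ $(\mathbb C^2\setminus\{0\})/\pm$. The equality $h(M_R s_0)=h(a_k)$ says the vector $M_R s_0$ is proportional to the vector $a_k$, equivalently $\det(M_R s_0\mid a_k)=0$, which is a single linear equation in the two coordinates of $s_0$ and hence fails on a Zariski-dense open set unless it is identically zero; identical vanishing would force $M_R s_0 \parallel a_k$ for \emph{all} $s_0$, impossible since $M_R$ is invertible and $a_k$ is a fixed line. Similarly $h(M_R s_0)=h(M_{R'} s_0)$ says $\det(M_R s_0\mid M_{R'} s_0)=0$; this is a homogeneous quadratic in $s_0$, and it vanishes identically only if $M_R$ and $M_{R'}$ induce the same map on $\mathbb{CP}^1$, i.e.\ $M_{R'}M_R^{-1}$ is $\pm\mathrm{Id}$. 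Here is where the crossing matters: $R$ and $R'$ are the two regions meeting along the over-arc $a_k$, so $M_{R'}M_R^{-1}$ is, up to the already-chosen conjugations, the elementary matrix attached to $a_k$ in \eqref{eq2}, which is $\pm\mathrm{Id}$ only if $a_k$ is the identity parabolic — excluded because $\rho$ is a representation sending meridians to genuine parabolics, together with the footnote hypothesis on the diagram. Hence every forbidden locus is a proper (closed, measure-zero) subvariety of the parameter space for $s_0$.

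Finally, since there are only finitely many crossings and hence finitely many forbidden loci, their union is still a proper algebraic subset of $(\mathbb C^2\setminus\{0\})/\pm$, so its complement is nonempty (indeed Zariski-dense). Choosing $s_0$ in this complement and propagating gives a region-coloring with $h(a_k)\neq h(s)$, $h(s)\neq h(s*a_k)$, and $h(s*a_k)\neq h(a_k)$ at every crossing simultaneously, which is exactly \eqref{regcon}. I expect the main obstacle to be the bookkeeping in the second step: one must set up the matrices $M_R$ carefully enough to identify $M_{R'}M_R^{-1}$ with the elementary matrix at the relevant crossing and thereby rule out the degenerate case $M_{R'}M_R^{-1}=\pm\mathrm{Id}$ — this is precisely the point where the non-triviality of the arc-colors (and the diagram assumption in the footnote) is used, and where a careless argument could miss a genuine obstruction.
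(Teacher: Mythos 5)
Your proof is correct and follows essentially the same route as the paper: fix the arc-coloring, observe that every region-color is determined by the color of one base region, and choose that base color generically so as to avoid finitely many proper forbidden loci. The only minor difference is that the paper imposes the stronger condition $h(s_i)\neq h(a_j)$ for all regions and arcs and then deduces $h(s)\neq h(s*a_k)$ from the fact that the parabolic $\widehat{a_k}$ has unique fixed point $h(a_k)$, whereas you treat that middle inequality as one more (quadratic) forbidden locus and rule out its identical vanishing by noting that $M_{R'}M_R^{-1}$ is a nontrivial parabolic.
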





\begin{proof}
Note that this was already proved inside the proof of Proposition 2 of \cite{Kabaya14}.
However, finding out the proof in the article is not easy, so we write it down below for the readers' convenience.

For the given arc-colors $a_1,\ldots,a_n$, we choose region-colors $s_1,\ldots,s_m$ so that
\begin{equation}\label{exi}
  \{h(s_1),\ldots,h(s_m)\}\cap\{h(a_1),\ldots,h(a_n)\}=\emptyset.
\end{equation}
This is always possible because, 
{
each $h(s_k)$ is written as $h(s_k)=M_k(h(s_1))$ by a M\"{o}bius transformation $M_k$, which only depends on
the arc-colors $a_1,\dots,a_r$. If we choose $h(s_1)\in\mathbb{CP}^1$ away from the finite set
$$\bigcup_{1\leq k\leq n}\left\{M_k^{-1}(h(a_1)),\ldots,M_k^{-1}(h(a_r))\right\},$$
we have $h(s_k)\notin \{h(a_1),\ldots,h(a_r)\}$ for all $k$.
This choice of a region-coloring guarantees $ h(a_k)\neq h({s_f})$ and $h({s_f}*a_k)\neq h(a_k)$.

Now assume $h(s_f*a_k)=h(s_f)$ holds under the choice of the region-coloring above. Then we obtain
\begin{equation}\label{eqnh}
  h(s_f*a_k)=\widehat{a_k}(h(s_f))=h(s_f),
\end{equation}
where $\widehat{a_k}:\mathbb{CP}^1\rightarrow\mathbb{CP}^1$ is the M\"{o}bius transformation
$$\widehat{a_k}(z)=\frac{(1+\alpha_k\beta_k)z-\alpha_k^2}{\beta_k^2 z+(1-\alpha_k\beta_k)}$$
of $a_k=\left(\begin{array}{cc}\alpha_k &\beta_k\end{array}\right)$. Then (\ref{eqnh}) implies
$h(s)$ is the fixed point of $\widehat{a_k}$, which means $h(a_k)=h(s)$ that contradicts (\ref{exi}).}

\end{proof}

We remark that the condition (\ref{exi}) {of a region-coloring} is stronger than the condition in { Lemma \ref{lem1}.
For example, the region-colorings of the examples in Section \ref{sec4} satisfy Lemma \ref{lem1}, but they do not satisfy (\ref{exi}).
Even though we actually proved stronger condition (\ref{exi}) in the proof, 
the region-colorings we consider are always assumed to satisfy Lemma \ref{lem1} from now on.}
The arc-coloring induced by $\rho$ together with the region-coloring satisfying Lemma \ref{lem1} is called 
the {\it shadow-coloring induced by} $\rho$.
This shadow-coloring will determine the exact coordinates of points of the octahedral triangulation in the next section.

\subsection{Octahedral triangulations of link complements}\label{triang}

In this section, we describe the ideal triangulation of $\mathbb{S}^3\backslash (L\cup\{\text{two points}\})$ which appeared in \cite{Cho13a}.
{Note that this triangulation naturally arises from the link diagram and has been widely used in various names. 
For example, the famous software SnapPea used this triangulation to obtain 
an ideal triangulation of the link complement $\mathbb{S}^3\backslash L$ \cite{Weeks05} (see also \cite{Yokota10}.) 
Another name of this construction is the {\it tunnel construction} in \cite{Baseilhac07}.
It seems the first written appearance of this construction was in \cite{Thurston99}.}

To obtain the triangulation, we consider the crossing $j$ in Figure \ref{crossings} 
and place an octahedron ${\rm A}_j{\rm B}_j{\rm C}_j{\rm D}_j{\rm E}_j{\rm F}_j$ 
on each crossing $j$ as in Figure \ref{twistocta}(a). Then we twist the octahedron by identifying edges ${\rm B}_j{\rm F}_j$ to ${\rm D}_j{\rm F}_j$ and
${\rm A}_j{\rm E}_j$ to ${\rm C}_j{\rm E}_j$, respectively. The edges ${\rm A}_j{\rm B}_j$, ${\rm B}_j{\rm C}_j$,
${\rm C}_j{\rm D}_j$ and ${\rm D}_j{\rm A}_j$ are called {\it horizontal edges} and we sometimes express these edges
in the diagram as arcs around the crossing as in Figure \ref{crossings}.

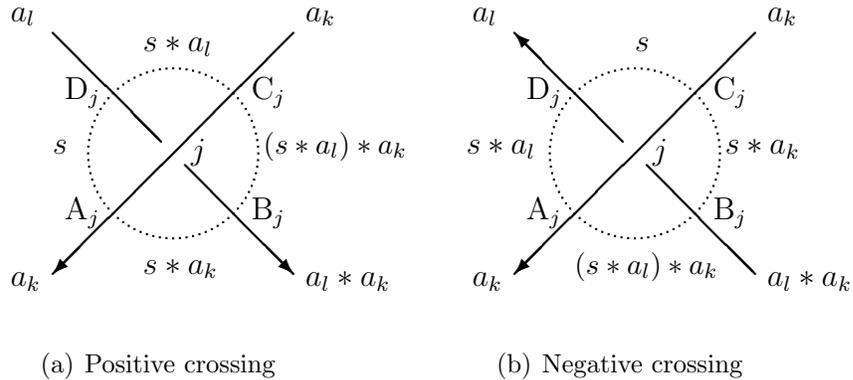
\begin{figure}[h]
\centering
\subfigure[Positive crossing]{\begin{picture}(6,5.5)  
  \setlength{\unitlength}{0.8cm}\thicklines
        \put(4,3){\arc[5](1,1){360}}
    \put(6,5){\vector(-1,-1){4}}
    \put(2,5){\line(1,-1){1.8}}
    \put(4.2,2.8){\vector(1,-1){1.8}}
    \put(2.2,1.9){${\rm A}_j$}
    \put(5.3,1.9){${\rm B}_j$}
    \put(5.3,3.9){${\rm C}_j$}
    \put(2.2,3.9){${\rm D}_j$}
    \put(4.3,2.9){$j$}
    \put(6.2,5.2){$a_k$}
    \put(1.3,0.8){$a_k$}
    \put(1.3,5.2){$a_l$}
    \put(6.2,0.8){$a_l*a_k$}
    \put(3.5,4.7){$s*a_l$}
    \put(2,3){$s$}
    \put(5.5,3){\small $(s*a_l)*a_k$}
    \put(3.5,1){$s*a_k$}
  \end{picture}}
\subfigure[Negative crossing]{\begin{picture}(6,5.5)  
  \setlength{\unitlength}{0.8cm}\thicklines
        \put(4,3){\arc[5](1,1){360}}
    \put(6,5){\vector(-1,-1){4}}
    \put(3.8,3.2){\vector(-1,1){1.8}}
    \put(4.2,2.8){\line(1,-1){1.8}}
    \put(2.2,1.9){${\rm A}_j$}
    \put(5.3,1.9){${\rm B}_j$}
    \put(5.3,3.9){${\rm C}_j$}
    \put(2.2,3.9){${\rm D}_j$}
    \put(4.3,2.9){$j$}
    \put(6.2,5.2){$a_k$}
    \put(1.3,0.8){$a_k$}
    \put(1.3,5.2){$a_l$}
    \put(6.2,0.8){$a_l*a_k$}
    \put(4,4.7){$s$}
    \put(1.2,3){$s*a_l$}
    \put(5.5,3){$s*a_k$}
    \put(3,1){\small $(s*a_l)*a_k$}
  \end{picture}}
 \caption{Crossing $j$ with shadow-coloring}\label{crossings}
\end{figure}

\begin{figure}[h]
\centering
\subfigure[]{\includegraphics[scale=0.7]{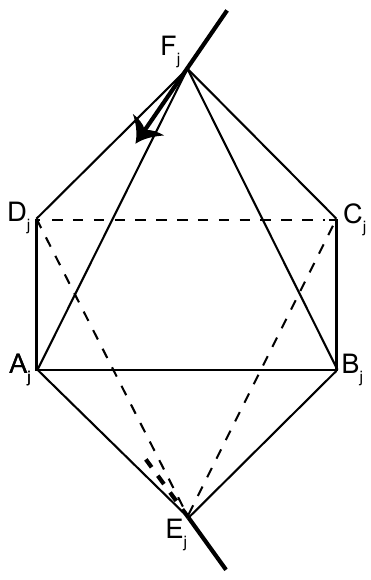}}\hspace{0.5cm}
\subfigure[]{\includegraphics[scale=0.7]{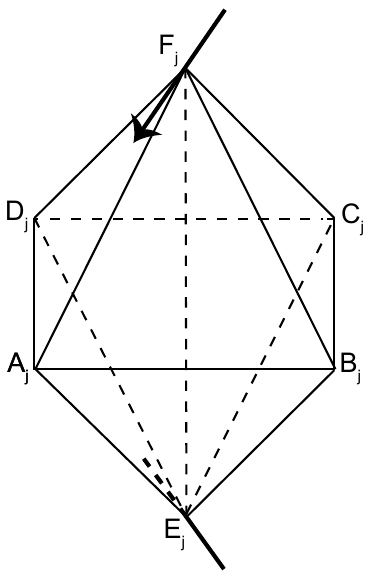}}\hspace{0.5cm}
\subfigure[]{\includegraphics[scale=0.7]{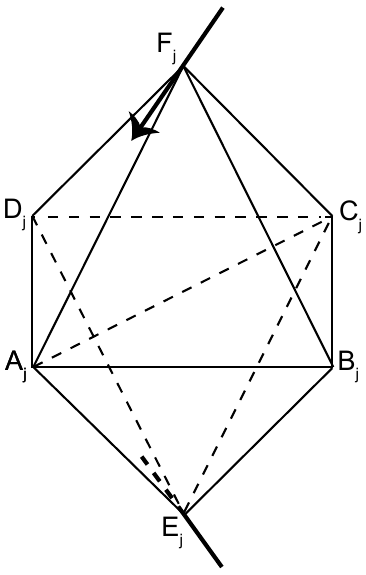}}
 \caption{Octahedron on the crossing $j$}\label{twistocta}
\end{figure}

Then we glue faces of the octahedra following the lines of the link diagram. 
Specifically, there are three gluing patterns as in Figure \ref{glue pattern}.
In each cases (a), (b) and (c), we identify the faces
$\triangle{\rm A}_{j}{\rm B}_{j}{\rm E}_{j}\cup\triangle{\rm C}_{j}{\rm B}_{j}{\rm E}_{j}$ to
$\triangle{\rm C}_{j+1}{\rm D}_{j+1}{\rm F}_{j+1}\cup\triangle{\rm C}_{j+1}{\rm B}_{j+1}{\rm F}_{j+1}$,
$\triangle{\rm B}_{j}{\rm C}_{j}{\rm F}_{j}\cup\triangle{\rm D}_{j}{\rm C}_{j}{\rm F}_{j}$ to
$\triangle{\rm D}_{j+1}{\rm C}_{j+1}{\rm F}_{j+1}\cup\triangle{\rm B}_{j+1}{\rm C}_{j+1}{\rm F}_{j+1}$
and
$\triangle{\rm A}_{j}{\rm B}_{j}{\rm E}_{j}\cup\triangle{\rm C}_{j}{\rm B}_{j}{\rm E}_{j}$ to
$\triangle{\rm C}_{j+1}{\rm B}_{j+1}{\rm E}_{j+1}\cup\triangle{\rm A}_{j+1}{\rm B}_{j+1}{\rm E}_{j+1}$,
respectively.

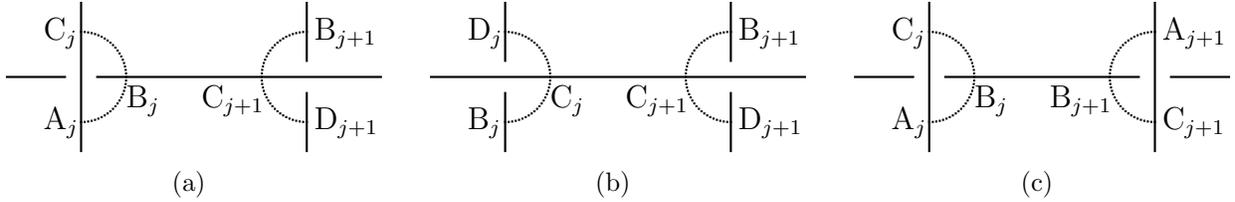
\begin{figure}[h]
\centering
  \subfigure[]
  {\begin{picture}(5,2)\thicklines
   \put(1,1){\arc[5](0,-0.6){180}}
   \put(4,1){\arc[5](0,0.6){180}}
   \put(1.2,1){\line(1,0){3.8}}
   \put(1,2){\line(0,-1){2}}
   \put(4,0){\line(0,1){0.8}}
   \put(4,2){\line(0,-1){0.8}}
   \put(0.8,1){\line(-1,0){0.8}}
   \put(0.5,0.3){${\rm A}_j$}
   \put(1.6,0.6){${\rm B}_j$}
   \put(0.5,1.5){${\rm C}_j$}
   \put(4.1,0.3){${\rm D}_{j+1}$}
   \put(2.6,0.6){${\rm C}_{j+1}$}
   \put(4.1,1.5){${\rm B}_{j+1}$}
  \end{picture}}\hspace{0.5cm}
  \subfigure[]
  {\begin{picture}(5,2)\thicklines
   \put(1,1){\arc[5](0,-0.6){180}}
   \put(4,1){\arc[5](0,0.6){180}}
   \put(5,1){\line(-1,0){5}}
   \put(1,2){\line(0,-1){0.8}}
   \put(1,0){\line(0,1){0.8}}
   \put(4,2){\line(0,-1){0.8}}
   \put(4,0){\line(0,1){0.8}}
   \put(0.5,0.3){${\rm B}_j$}
   \put(1.6,0.6){${\rm C}_j$}
   \put(0.5,1.5){${\rm D}_j$}
   \put(4.1,0.3){${\rm D}_{j+1}$}
   \put(2.6,0.6){${\rm C}_{j+1}$}
   \put(4.1,1.5){${\rm B}_{j+1}$}
  \end{picture}}\hspace{0.5cm}
  \subfigure[]
  {\begin{picture}(5,2)\thicklines
   \put(1,1){\arc[5](0,-0.6){180}}
   \put(4,1){\arc[5](0,0.6){180}}
   \put(4.2,1){\line(1,0){0.8}}
   \put(1,2){\line(0,-1){2}}
   \put(0.8,1){\line(-1,0){0.8}}
   \put(4,2){\line(0,-1){2}}
   \put(1.2,1){\line(1,0){2.6}}
   \put(0.5,0.3){${\rm A}_j$}
   \put(1.6,0.6){${\rm B}_j$}
   \put(0.5,1.5){${\rm C}_j$}
   \put(4.1,0.3){${\rm C}_{j+1}$}
   \put(2.6,0.6){${\rm B}_{j+1}$}
   \put(4.1,1.5){${\rm A}_{j+1}$}
  \end{picture}}
  \caption{Three gluing patterns}\label{glue pattern}
\end{figure} 

Note that this gluing process identifies vertices $\{{\rm A}_j, {\rm C}_j\}$ to one point, denoted by $-\infty$,
and $\{{\rm B}_j, {\rm D}_j\}$ to another point, denoted by $\infty$, and finally $\{{\rm E}_j, {\rm F}_j\}$ to
the other points, denoted by ${\rm P}_t$ where $t=1,\ldots,c$ and $c$ is the number of the components of the link $L$. 
The regular neighborhoods of $-\infty$ and $\infty$ are two 3-balls and that of $\cup_{t=1}^c P_t$ is
a tubular neighborhood of the link $L$. 
Therefore, after removing all vertices of the gluing, 
we obtain an {\it octahedral decomposition} of $\mathbb{S}^3\backslash (L\cup\{\pm\infty\})$.
The {\it octahedral triangulation} is obtained by subdividing each octahedron of the decomposition 
into four tetrahedra in certain way.

{To apply the construction of the developing map of $\rho$ in Theorem 4.11 of \cite{Zickert09}, 
we subdivide each octahedron into four tetrahedra using the shadow-coloring of $\rho$ as follows.
}

{
\begin{defi}
Consider a crossing $j$ with the shadow-color in Figure \ref{crossings}. 
The crossing $j$ is called \textbf{non-degenerate} when $h(a_k)\neq h(a_l)$
and \textbf{degenerate} when $h(a_k)= h(a_l)$.
\end{defi}}

If {a crossing $j$ is non-degenerate}, 
then we subdivide the octahedron { on the crossing $j$} into four tetrahedra by adding edge ${\rm E}_j{\rm F}_j$ as in Figure \ref{twistocta}(b).
Also, if {a crossing $j$ is degenerate}, then we subdivide it by adding edge ${\rm A}_j{\rm C}_j$ as in Figure \ref{twistocta}(c).
{ These subdivision guarantees non-degeneracy of all tetrahedra, which will be proved at the end of this section.
The resulting triangulation} is called {\it the octahedral triangulation} of $\mathbb{S}^3\backslash (L\cup\{\pm\infty\})$.

Consider the shadow-coloring of a link diagram $D$ induced by $\rho$, and
let $\{a_1,a_2,\ldots,a_n\}$ be the arc-colors and $\{s_1,s_2,\ldots,s_m\}$ be the region-colors.
The number of these colors is finite, so can choose an element $p\in\mathcal{P}$ satisfying 
\begin{equation}\label{defp}
h(p)\notin\{h(a_1),\ldots,h(a_n), h(s_1),\ldots,h(s_m)\}.
\end{equation}

The geometric shape of the triangulation is determined by the shadow-coloring induced by $\rho$ in the following way. 
If the crossing $j$ {in Figure \ref{crossings} is non-degenerate and} positive, then let the signed coordinates of the tetrahedra 
${\rm E}_j{\rm F}_j{\rm C}_j{\rm D}_j$, ${\rm E}_j{\rm F}_j{\rm A}_j{\rm D}_j$, 
${\rm E}_j{\rm F}_j{\rm A}_j{\rm B}_j$, ${\rm E}_j{\rm F}_j{\rm C}_j{\rm B}_j$ be
\begin{equation}\label{octa1}
(a_l, a_k, s*a_l,p), -(a_l, a_k, s, p), (a_l*a_k,a_k,s*a_k,p), -(a_l*a_k,a_k, (s*a_l)*a_k, p),
\end{equation}
respectively. Here, the minus sign of the coordinate means the orientation of the tetrahedron
does not coincide with the one induced by the vertex-ordering. 
Also, if the crossing $j$ is {non-degenerate and} negative, then 
let the signed coordinates of the tetrahedra 
${\rm E}_j{\rm F}_j{\rm C}_j{\rm D}_j$, ${\rm E}_j{\rm F}_j{\rm A}_j{\rm D}_j$, 
${\rm E}_j{\rm F}_j{\rm A}_j{\rm B}_j$, ${\rm E}_j{\rm F}_j{\rm C}_j{\rm B}_j$ be
\begin{equation}\label{octa1-1}
(a_l, a_k, s, p), -(a_l, a_k, s*a_l,p), (a_l*a_k,a_k, (s*a_l)*a_k, p), -(a_l*a_k,a_k,s*a_k,p),
\end{equation}
respectively. Figures \ref{pic08}--\ref{coordiocta} show the signed coordinates of (\ref{octa1}) and (\ref{octa1-1}).

\begin{figure}[h]
\subfigure[Positive crossing]{
  \includegraphics[scale=1]{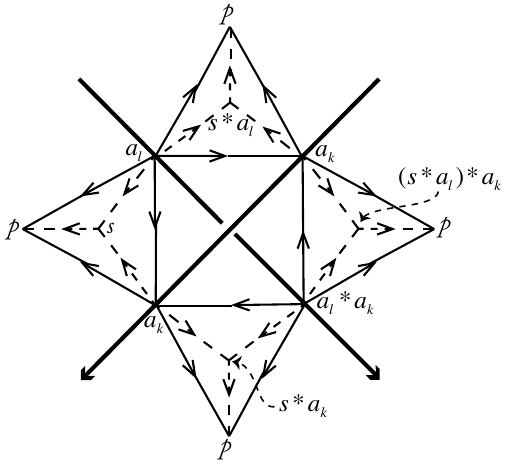}}
  \subfigure[Negative crossing]{
  \includegraphics[scale=1]{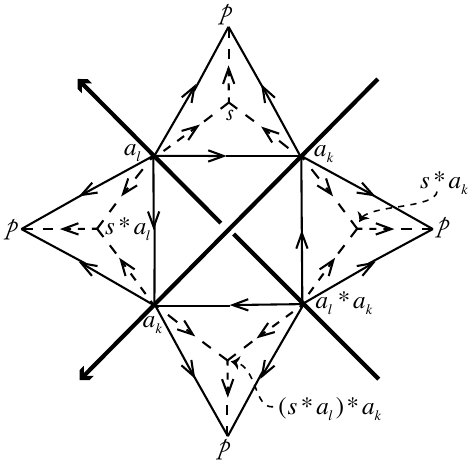} } 
  \caption{Cooridnates of tetrahedra when $h(a_k)\neq h(a_l)$}\label{pic08}
\end{figure}

\begin{figure}[h]
\subfigure[Positive crossing]{
  \includegraphics[scale=1.1]{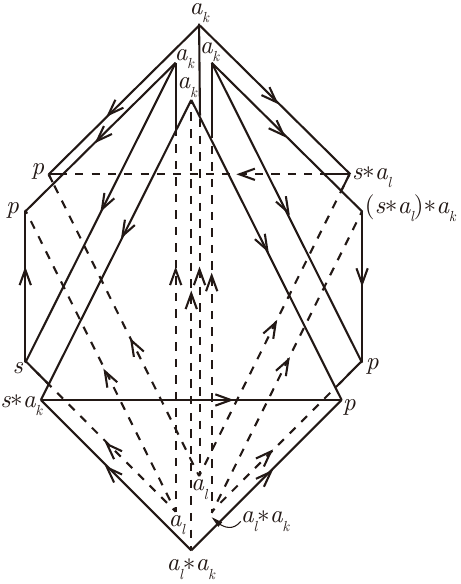}}
  \subfigure[Negative crossing]{
  \includegraphics[scale=1.1]{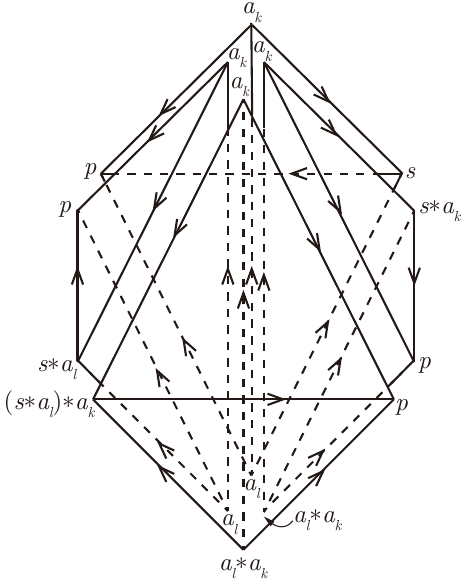} } 
  \caption{Figure \ref{pic08} in octahedral position}\label{coordiocta}
\end{figure}

On the other hand, if the crossing $j$ {in Figure \ref{crossings} is degenerate and} is positive,
then let the signed coordinates of the tetrahedra 
${\rm F}_j{\rm A}_j{\rm C}_j{\rm D}_j$, ${\rm E}_j{\rm A}_j{\rm C}_j{\rm D}_j$,
${\rm E}_j{\rm A}_j{\rm C}_j{\rm B}_j$, ${\rm F}_j{\rm A}_j{\rm C}_j{\rm B}_j$
be
\begin{equation}\label{octa2}
-(a_k,s, s*a_l,p), (a_l,s, s*a_l, p), -(a_l*a_k,s*a_k,(s*a_l)*a_k, p), (a_k,s*a_k, (s*a_l)*a_k, p),
\end{equation}
respectively. If $j$ is {degenerate and} negative, then let the signed coordinates be
\begin{equation}\label{octa2-1}
-(a_k,s*a_l,s,p), (a_l,s*a_l,s, p), -(a_l*a_k,(s*a_l)*a_k,s*a_k, p), (a_k,(s*a_l)*a_k,s*a_k, p),
\end{equation}
respectively.


Figure \ref{degocta} shows the signed coordinates of (\ref{octa2}) and (\ref{octa2-1}).
Note that the orientations of (\ref{octa1})--(\ref{octa2-1}) are different from \cite{Kabaya14} and match with \cite{Cho13a}.

\begin{figure}[h]\centering
\subfigure[Positive crossing]{
  \includegraphics[scale=0.8]{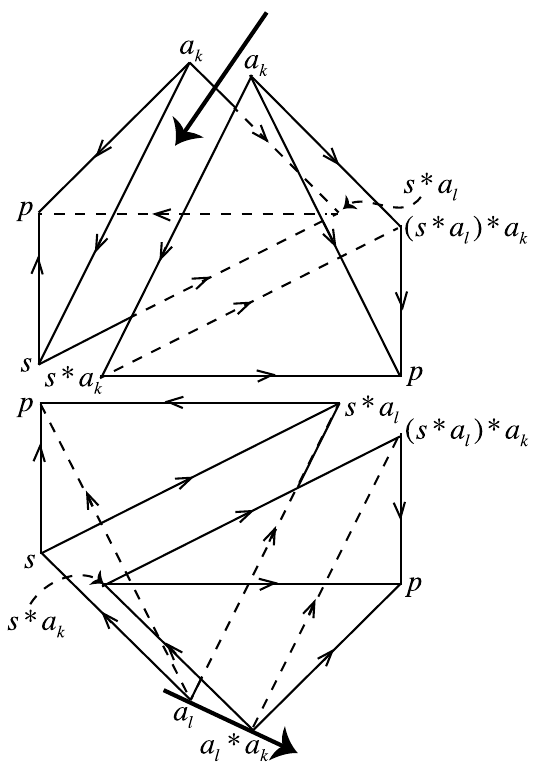}}
  \subfigure[Negative crossing]{
  \includegraphics[scale=0.8]{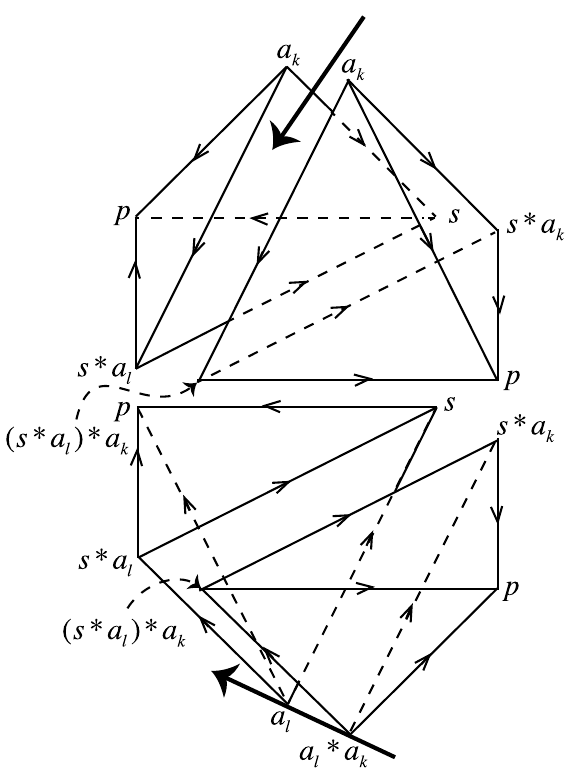} } 
  \caption{Cooridnates of tetrahedra when $h(a_k)= h(a_l)$}\label{degocta}
\end{figure}

We remark that the signed coordinates (\ref{octa1})--(\ref{octa2-1}) actually define an element in
certain simplicial quandle homology {in} \cite{Kabaya14}. Although this homology is crucial for proving
the main results of \cite{Kabaya14}, we will use their results without the homology.

\begin{defi} Let $v_0,v_1,v_2,v_3\in\mathbb{CP}^1=\mathbb{C}\cup\{\infty\}=\partial\mathbb{H}^3$.
 The hyperbolic ideal tetrahedron with signed coordinate $\sigma(v_0,v_1,v_2,v_3)$ with $\sigma\in\{\pm1\}$
 is called \textbf{degenerate} when some of the vertices $v_0,v_1,v_2,v_3$ coincide, and
\textbf{non-degenerate} when all the vertices are different.
\textbf{The cross-ratio} $[v_0,v_1,v_2,v_3]^\sigma$ of the non-degenerate signed coordinate $\sigma(v_0,v_1,v_2,v_3)$ is defined by
 $$[v_0,v_1,v_2,v_3]^\sigma=\left(\frac{v_3-v_0}{v_2-v_0}\,\frac{v_2-v_1}{v_3-v_1}\right)^\sigma
 \in\mathbb{C}\backslash\{0,1\}.$$
\end{defi}

The tetrahedra in (\ref{octa1})--(\ref{octa2-1}) have elements of the coordinates in $\mathcal{P}$. Therefore,
we need to send them to points in the boundary of the hyperbolic 3-space $\partial\mathbb{H}^3$ so as to obtain hyperbolic ideal tetrahedra.
The Hopf map $h$, defined in Definition \ref{Hopf}, plays the role. 

\begin{lem}
The images of (\ref{octa1})--(\ref{octa2-1}) under the Hopf map $h$ are non-degenerate tetrahedra. 
Specifically, if the crossing $j$ {is non-degenerate and} positive, then
  \begin{eqnarray}
  (h(a_l), h(a_k), h(s*a_l),h(p)), -(h(a_l), h(a_k), h(s), h(p)),\label{octa3}\\
  (h(a_l*a_k),h(a_k),h(s*a_k),h(p)), -( h(a_l*a_k),h(a_k), h((s*a_l)*a_k), h(p))\nonumber,
  \end{eqnarray}
and, if the crossing $j$ {is non-degenerate and} negative, then
  \begin{eqnarray}
  (h(a_l), h(a_k), h(s), h(p)), -(h(a_l), h(a_k), h(s*a_l),h(p)),\label{octa3-1}\\
  ( h(a_l*a_k),h(a_k), h((s*a_l)*a_k), h(p)), -(h(a_l*a_k),h(a_k),h(s*a_k),h(p))\nonumber,
  \end{eqnarray}
are non-degenerate hyperbolic ideal tetrahedra. 

If the crossing $j$ {is degenerate and} positive, then
  \begin{eqnarray}
  (h(a_l), h(s), h(s*a_l),h(p)), -(h(a_k), h(s), h(s*a_l), h(p)),\label{octa4}\\
  (h(a_k), h(s*a_k), h((s*a_l)*a_k), h(p)), -(h(a_l*a_k), h(s*a_k), h((s*a_l)*a_k), h(p)), \nonumber
  \end{eqnarray}
and, if the crossing $j$ {is degenerate and} negative, then
  \begin{eqnarray}
  (h(a_l), h(s*a_l),h(s), h(p)), -(h(a_k), h(s*a_l),h(s),  h(p)),\label{octa4-1}\\
  (h(a_k),  h((s*a_l)*a_k), h(s*a_k),h(p)), -(h(a_l*a_k), h((s*a_l)*a_k), h(s*a_k),h(p)), \nonumber
  \end{eqnarray}
are non-degenerate ideal hyperbolic tetrahedra.
\end{lem}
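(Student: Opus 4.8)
The identification of (\ref{octa3})--(\ref{octa4-1}) as the $h$-images of (\ref{octa1})--(\ref{octa2-1}) is immediate, since each listed $4$-tuple is obtained by applying $h$ to every coordinate of the corresponding tuple; so the only content of the lemma is the assertion of non-degeneracy, namely that in each of these $4$-tuples the four points of $\mathbb{CP}^1$ are pairwise distinct. The plan is to verify this pair by pair, after sorting the vertices of every tetrahedron into three kinds: arc-colors, which always lie in $\{a_k,\,a_l,\,a_l*a_k\}$; region-colors, which always lie in $\{s,\,s*a_l,\,s*a_k,\,(s*a_l)*a_k\}$; and the auxiliary element $p$. It then suffices to treat each of the resulting types of pair (arc/arc, arc/region, region/region, and anything with $p$).

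Three facts will do all the work. First, by Lemma \ref{lem1} the region-coloring is chosen to satisfy (\ref{regcon}); unwinding this, $h(a)\ne h(s')\ne h(s'')\ne h(a)$ whenever $(a,s',s'')$ is a local configuration as in Figure \ref{pic04}, i.e. $a$ is an arc-color and $s''=s'*a$. Second, by (\ref{defp}) we have $h(p)\ne h(a_i)$ and $h(p)\ne h(s_j)$ for all $i,j$, so every pair containing $p$ is automatically distinct. Third, $h(a_l*a_k)=\widehat{a_k}(h(a_l))$, and $\widehat{a_k}$ is a bijection of $\mathbb{CP}^1$ fixing $h(a_k)$ (this is the computation already appearing in the proof of Lemma \ref{lem1}); hence $h(a_l*a_k)=h(a_k)=\widehat{a_k}(h(a_k))$ would force $h(a_l)=h(a_k)$, so $h(a_l*a_k)\ne h(a_k)$ whenever $h(a_k)\ne h(a_l)$.

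With these in hand I would read off the remaining pairs from Figure \ref{crossings}, using the third quandle axiom in the form $(s*a_k)*(a_l*a_k)=(s*a_l)*a_k$ to see that $s*a_k$ and $(s*a_l)*a_k$ are related by the arc-color $a_l*a_k$. In the case $h(a_k)\ne h(a_l)$: each tetrahedron of (\ref{octa1}) (resp. (\ref{octa1-1})) contains exactly one region-color, every arc/region pair occurring in it is of the form $(a,s')$ with $s'=s''*a$ or $s''=s'*a$ for a neighbouring region-color $s''$ and so is covered by the first fact; the only arc/arc pairs that occur are $\{a_k,a_l\}$, distinct by hypothesis, and $\{a_k,a_l*a_k\}$, distinct by the third fact; and pairs with $p$ are handled by the second fact. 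In the case $h(a_k)=h(a_l)$: each tetrahedron of (\ref{octa2}) (resp. (\ref{octa2-1})) contains exactly one arc-color and two region-colors; the two region-colors form either $\{s,\,s*a_l\}$ or $\{s*a_k,\,(s*a_l)*a_k\}$, each a pair $(s',s'*a)$ for a suitable arc-color $a$, and the single arc-color is in the correct relation with both, so every pair is again covered by the first two facts. Since Figure \ref{crossings} exhibits the same color set and the same incidences for positive and negative crossings, all four cases (\ref{octa1})--(\ref{octa2-1}) go through identically.

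There is no hard analytic step here: the only thing not read straight off (\ref{regcon}) and (\ref{defp}) is the fixed-point remark $h(a_l*a_k)\ne h(a_k)$, which is already essentially contained in the proof of Lemma \ref{lem1}. The step I expect to be the real obstacle is the bookkeeping: one must match every one of the $\binom{4}{2}=6$ vertex-pairs in each tetrahedron of (\ref{octa1})--(\ref{octa2-1}) with the correct local configuration in Figure \ref{crossings} and be certain that none has been overlooked; organizing this cleanly (e.g. by the three-kinds classification above, so that the four crossing cases collapse to essentially one verification) is where the care lies.
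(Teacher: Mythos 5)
Your proposal is correct and follows essentially the same route as the paper's proof: reduce non-degeneracy to pairwise distinctness of edge endpoints, dispose of every pair involving $p$ via (\ref{defp}) and every adjacent arc/region or region/region pair via Lemma \ref{lem1}, and handle the one remaining pair $(a_k,\,a_l*a_k)$ by the injectivity of the M\"obius transformation $\widehat{a_k}$ (the paper phrases this as $h(a_l*a_k)=h(a_k*a_k)$ forcing $h(a_l)=h(a_k)$). Your explicit three-kinds classification and the use of $(s*a_k)*(a_l*a_k)=(s*a_l)*a_k$ just make the paper's adjacency bookkeeping more precise.
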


\begin{proof} Note that the {region-coloring} we are considering satisfies Lemma \ref{lem1}.
To show the non-degeneracy { of a tetrahedron}, it is enough to show any two endpoints of an edge are different.

In the cases of (\ref{octa3})--(\ref{octa3-1}), endpoints of any edge are adjacent, as a pair among $a_k, s, s*a_k$ in Figure \ref{pic04} (to check the adjacency, refer Figure \ref{pic05}), or one of them is $p$, except
the edges $(a_l, a_k)$, $(a_l*a_k,a_k)$.
Therefore, it is enough to show that $h(a_k)\neq h(a_l)$ implies $h(a_l*a_k)\neq h(a_k)$, which is trivial because
$h(a_l*a_k)=h(a_k*a_k)$ implies $h(a_l)=h(a_k)$.

In the cases of (\ref{octa4})--(\ref{octa4-1}), all endpoints of edges are adjacent or one of them is $p$, 
so we get the proof.

\end{proof}

Note that, when {the crossing $j$ is degenerate}, first two tetrahedra in (\ref{octa4}) share the same coordinate
with different signs and the others do the same. Therefore, all tetrahedra cancel out each other geometrically
and we can remove the octahedron of the crossing. 
{(This is why the crossing is called degenerate.)} Also, the same holds for (\ref{octa4-1}).
{This idea} will be used in Section \ref{sec3}.

{The assignment of the coordinates to tetrahedra above is from \cite{Kabaya14}.
Note that this assignment is based on the construction of the developing map of $\rho$
proposed in \cite{Neumann99} and \cite{Zickert09},
so the shapes of the triangulation determines the developing map of $\rho$.}

\subsection{Complex volume of $\rho$}

Consider an ideal tetrahedron with vertices $v_0$, $v_1$, $v_2$, $v_3$, where $v_k\in\mathbb{CP}^1$.
For each edge $v_k v_l$, we assign $g_{kl}$ and $\widehat{g}_{kl}\in\mathbb{CP}^1$, 
and call them {\it long-edge parameter} and {\it edge parameter}, respectively. (See Figure \ref{pic11}.)
Later, we will distinguish them by considering $g_{kl}$ is assigned to the edge of a triangulation
and $\widehat{g}_{kl}$ to the edge of a tetrahedron.

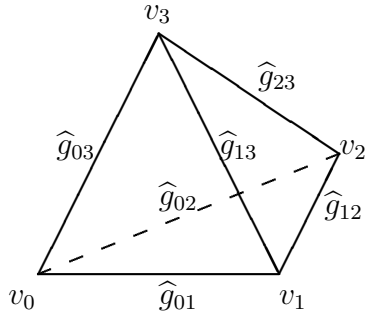
\begin{figure}[h]
\centering
  {\setlength{\unitlength}{0.4cm}
  \begin{picture}(12,10)\thicklines
   \put(1,1){\line(1,0){8}}
   \put(1,1){\line(1,2){4}}
   \put(5,9){\line(1,-2){4}}
   \put(9,1){\line(1,2){2}}
   \put(5,9){\line(3,-2){6}}
   \dashline{0.5}(1,1)(11,5)
   \put(0,0){$v_0$}
   \put(9,0){$v_1$}
   \put(11,5){$v_2$}
   \put(4.5,9.5){$v_3$}
   \put(5,0){$\widehat{g}_{01}$}
   \put(8.3,7.3){$\widehat{g}_{23}$}
   \put(1.6,5){$\widehat{g}_{03}$}
   \put(10.5,3){$\widehat{g}_{12}$}
   \put(5,3.3){$\widehat{g}_{02}$}
   \put(7,5){$\widehat{g}_{13}$}
  \end{picture}}
  \caption{Edge parameter}\label{pic11}
\end{figure}

\begin{defi} For the edge parameter $\widehat{g}_{kl}$ of an ideal tetrahedron, 
\textbf{Ptolemy {relation}} is the following equation:
$$\widehat{g}_{02}\widehat{g}_{13}=\widehat{g}_{01}\widehat{g}_{23}+\widehat{g}_{03}\widehat{g}_{12}.$$
\end{defi}

For example, if we define edge parameter $\widehat{g}_{kl}:=v_l-v_k$, then direct calculation shows
\begin{equation}\label{eq7}
  (v_2-v_0)(v_3-v_1)=(v_1-v_0)(v_3-v_2)+(v_3-v_0)(v_2-v_1),
\end{equation}
which is the Ptolemy {relation}. Furthermore, these edge parameters satisfy
\begin{equation}\label{eq8}
  [v_0,v_1,v_2,v_3]=\frac{\widehat{g}_{03}\widehat{g}_{12}}{\widehat{g}_{02}\widehat{g}_{13}}.
\end{equation}
{ }

To apply the results of \cite{Zickert09} and \cite{Hikami13}, 
{the edge parameters should satisfy the Ptolemy relation, (\ref{eq8}) and one more condition that
they} should depend on the edge of the triangulation, not of the tetrahedron.
In other words, if two edges are glued in the triangulation, {the edge parameters should be the same}. 
We call this {latter} condition {\it the coincidence  condition}.
When the edge-parameters satisfy the coincidence {condition}, we call them {\it the long-edge parameters} and denote it by $g_{kl}$. 
(We also need extra condition that the orientations of the two glued edges induced by the vertex-orientations 
of each tetrahedra should coincide. However, the vertex-orientation in (\ref{octa3})--(\ref{octa4-1}) always satisfies it.)
Unfortunately, the edge-parameter $\widehat{g}_{kl}=v_l-v_k$ defined above
does not satisfy this condition, so we will redefine the edge-parameter and the long-edge parameter using \cite{Kabaya14} as follows.

At first, consider two elements 
$a=\left(\begin{array}{cc}\alpha_1 &\alpha_2\end{array}\right), b=\left(\begin{array}{cc}\beta_1 &\beta_2\end{array}\right)$ in $\mathcal{P}$.
We define {\it determinant} $\det(a,b)$ by
\begin{equation*}
  \det(a,b):=\pm\det\left(\begin{array}{cc}\alpha_1 & \alpha_2 \\\beta_1 & \beta_2\end{array}\right)=\pm(\alpha_1 \beta_2-\alpha_2\beta_1 ).
\end{equation*}
Note that the determinant is defined up to sign due to the choice of the representative 
$a=\left(\begin{array}{cc} \alpha_1 & \alpha_2\end{array}\right)=\left(\begin{array}{cc} -\alpha_1 & -\alpha_2\end{array}\right)\in\mathcal{P}$.
To remove this ambiguity, we fix representatives\footnote{
The difference with \cite{Kabaya14} is that they chose
a sign of the determinant once and for all. Their choice is good enough to define long-edge parameter $g_{jk}$, 
but not for edge parameter $\widehat{g}_{jk}$.} 
of arc-colors in $\mathbb{C}^2\backslash\{0\}$ once and for all. 
Then we fix a representative of one region-color, which uniquely determines the representatives of all the other region-colors 
by the arc-coloring.
(This is due to the fact that $s*(\pm a)=s*a$ for any $s,a\in\mathbb{C}^2\backslash\{0\}$.)

After fixing all the representatives of the shadow-coloring, we obtain a well-defined determinant
\begin{equation}\label{det}
  \det(a,b)=\det\left(\begin{array}{cc}\alpha_1 & \alpha_2 \\\beta_1 & \beta_2\end{array}\right)=\alpha_1 \beta_2- \alpha_2\beta_1.
\end{equation}

\begin{lem} For $a,b,c\in\mathbb{C}^2\backslash\{0\}$, the determinant satisfies
$$\det(a*c,b*c)=\det(a,b).$$
\end{lem}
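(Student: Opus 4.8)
The plan is to verify the identity by a direct matrix computation, exploiting the fact that the operation $*c$ on $\mathbb{C}^2\backslash\{0\}$ is given by left-multiplication by a fixed matrix. Write $c=\left(\begin{smallmatrix}\gamma\\ \delta\end{smallmatrix}\right)$ and let
$$M_c=\left(\begin{array}{cc}1+\gamma\delta & -\gamma^2 \\ \delta^2 & 1-\gamma\delta\end{array}\right),$$
so that $a*c=M_c\,a$ and $b*c=M_c\,b$ as column vectors in $\mathbb{C}^2\backslash\{0\}$ (this is exactly the explicit formula for $*$ recorded in Section \ref{sec21}). Then $\det(a*c,b*c)$ is the determinant of the $2\times 2$ matrix whose columns are $M_c a$ and $M_c b$, which equals $\det(M_c)\cdot\det(a,b)$ by the multiplicativity of the determinant for the product $M_c\,[\,a\mid b\,]$.

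The key step is therefore to observe that $\det(M_c)=(1+\gamma\delta)(1-\gamma\delta)-(-\gamma^2)(\delta^2)=1-\gamma^2\delta^2+\gamma^2\delta^2=1$, i.e. $M_c\in\mathrm{SL}(2,\mathbb{C})$. This is immediate and is precisely why the identity \eqref{eq2} was set up with those entries. Hence $\det(a*c,b*c)=\det(a,b)$, as claimed. One should note here that the statement is about the sign-refined determinant \eqref{det}, defined after fixing representatives of all shadow-colors; since $a*c$ and $b*c$ are computed from the chosen representatives $a,b,c$ by the genuine (not projective) matrix formula above, the equality holds at the level of these fixed representatives, with no sign ambiguity to track.

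There is essentially no obstacle: the only thing to be careful about is bookkeeping of representatives, namely that the determinant in the statement refers to \eqref{det} rather than the up-to-sign version, and that the fixing of representatives is compatible with $*c$ because $s*(\pm a)=s*a$ was already used to propagate the choice of region-colors consistently. Once that is noted, the proof is a one-line consequence of $\det(M_c)=1$ together with multiplicativity of the determinant.
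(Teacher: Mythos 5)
Your proof is correct and is essentially identical to the paper's: both write $a*c=M_c\,a$, $b*c=M_c\,b$ and conclude via $\det(a*c,b*c)=\det(M_c)\det(a,b)$ with $\det(M_c)=1$. The extra remark about fixed representatives is a harmless (and accurate) addition.
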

\begin{proof}
  Let $a=\left(\begin{array}{cc} \alpha_1 & \alpha_2\end{array}\right)$, $b=\left(\begin{array}{cc} \beta_1 & \beta_2\end{array}\right)$,
  $c=\left(\begin{array}{cc} \gamma_1 & \gamma_2\end{array}\right)$,
  and $C=\left(\begin{array}{cc} 1+\gamma_1 \gamma_2& \gamma_2^2 \\ -\gamma_1^2& 1-\gamma_1 \gamma_2\end{array}\right)$. Then
  $$\det(a*c,b*c)=\det(aC,bC)
  =\det(a,b)\cdot \det C=\det(a,b).$$
\end{proof}

Consider the shadow-coloring and the coordinates of tetrahedra in Figure \ref{pic08} (or Figure \ref{coordiocta}) and Figure \ref{degocta}. 
We define the edge parameter $\widehat{g}_{kl}$
using those coordinates. Specifically, when the signed coordinate of the tetrahedron is $\sigma(a_0,a_1,a_2,a_3)$ 
with $\sigma\in\{\pm 1\}$ and $a_k\in\mathbb{C}^2\backslash\{0\}$, we define the edge parameter by
\begin{equation}\label{gjk}
  \widehat{g}_{kl}=\det(a_k,a_l).
\end{equation}
For example, the edge parameters of the tetrahedron $\mp(a_l,a_k,s,p)$ in the left-hand or the right-hand side of Figure \ref{pic08}
(or Figure \ref{coordiocta}) are defined by
\begin{eqnarray*}
  \widehat{g}_{01}=\det(a_l,a_k), &\widehat{g}_{02}=\det(a_l,s), &\widehat{g}_{03}=\det(a_l,p), \\
  \widehat{g}_{12}=\det(a_k,s), &\widehat{g}_{13}=\det(a_k,p), &\widehat{g}_{23}=\det(s,p).
\end{eqnarray*}

\begin{lem} The edge parameter $\widehat{g}_{kl}$ of the tetrahedron $\sigma(a_0,a_1,a_2,a_3)$ 
defined in (\ref{gjk}) satisfies the Ptolemy identity and
  \begin{equation}\label{eq11}
    [h(a_0),h(a_1),h(a_2),h(a_3)]=\frac{\widehat{g}_{03}\widehat{g}_{12}}{\widehat{g}_{02}\widehat{g}_{13}}.
  \end{equation}
\end{lem}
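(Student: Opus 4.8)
The plan is to verify the two claims by direct computation, reducing both to the already-established determinant identity. First I would set up notation: write the signed coordinate as $\sigma(a_0,a_1,a_2,a_3)$ with each $a_k=\left(\begin{smallmatrix}\alpha_k^{(1)}\\ \alpha_k^{(2)}\end{smallmatrix}\right)\in\mathbb{C}^2\backslash\{0\}$, set $\widehat{g}_{kl}=\det(a_k,a_l)$, and recall that $h(a_k)=\alpha_k^{(1)}/\alpha_k^{(2)}$. The key elementary observation is that for any two vectors $a_k,a_l$ one has
\begin{equation*}
  h(a_l)-h(a_k)=\frac{\alpha_l^{(1)}}{\alpha_l^{(2)}}-\frac{\alpha_k^{(1)}}{\alpha_k^{(2)}}
  =\frac{\alpha_l^{(1)}\alpha_k^{(2)}-\alpha_k^{(1)}\alpha_l^{(2)}}{\alpha_k^{(2)}\alpha_l^{(2)}}
  =\frac{-\det(a_k,a_l)}{\alpha_k^{(2)}\alpha_l^{(2)}}=\frac{-\widehat{g}_{kl}}{\alpha_k^{(2)}\alpha_l^{(2)}}.
\end{equation*}
So every difference of Hopf images is, up to the nowhere-vanishing scalar $\alpha_k^{(2)}\alpha_l^{(2)}$, the corresponding edge parameter. (Here I should note that if some $\alpha_k^{(2)}=0$, i.e.\ $h(a_k)=\infty$, the identity still holds as a limiting statement in $\mathbb{CP}^1$; alternatively one checks that case separately, since by the previous lemma the tetrahedra in question are non-degenerate so at most one vertex is $\infty$, and the cross-ratio formula extends continuously.)

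For the cross-ratio identity (\ref{eq11}), I would simply substitute the displayed relation into the definition of the cross-ratio:
\begin{equation*}
  [h(a_0),h(a_1),h(a_2),h(a_3)]
  =\frac{h(a_3)-h(a_0)}{h(a_2)-h(a_0)}\cdot\frac{h(a_2)-h(a_1)}{h(a_3)-h(a_1)}.
\end{equation*}
Replacing each factor $h(a_l)-h(a_k)$ by $-\widehat{g}_{kl}/(\alpha_k^{(2)}\alpha_l^{(2)})$, the four scalar denominators are $\alpha_0^{(2)}\alpha_3^{(2)}$, $\alpha_0^{(2)}\alpha_2^{(2)}$, $\alpha_1^{(2)}\alpha_2^{(2)}$, $\alpha_1^{(2)}\alpha_3^{(2)}$; they appear with the pattern (numerator $\cdot$ numerator)/(denominator $\cdot$ denominator), and each $\alpha_k^{(2)}$ occurs exactly twice on top and twice on bottom, so they all cancel, as do the four minus signs. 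What remains is exactly $\widehat{g}_{03}\widehat{g}_{12}/(\widehat{g}_{02}\widehat{g}_{13})$, which is (\ref{eq11}). Note the sign $\sigma$ is irrelevant here because the left side of (\ref{eq11}) is the unsigned cross-ratio.

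For the Ptolemy identity, I would observe that $\det(\cdot,\cdot)$ is, up to a common scalar, the same as the naive edge parameter $v_l-v_k$: indeed by the computation above, $\widehat{g}_{kl}=\det(a_k,a_l)=-\alpha_k^{(2)}\alpha_l^{(2)}\,(h(a_l)-h(a_k))$. Setting $v_k:=h(a_k)$ and $\lambda_k:=\alpha_k^{(2)}$, this reads $\widehat{g}_{kl}=-\lambda_k\lambda_l(v_l-v_k)$. Substituting into the three products appearing in the Ptolemy identity $\widehat{g}_{02}\widehat{g}_{13}=\widehat{g}_{01}\widehat{g}_{23}+\widehat{g}_{03}\widehat{g}_{12}$, every term carries the common factor $\lambda_0\lambda_1\lambda_2\lambda_3$, and after dividing it out one is left with precisely the classical Ptolemy relation (\ref{eq7}) for the complex numbers $v_0,v_1,v_2,v_3$, which holds identically. (If one of the $v_k$ is $\infty$ one instead uses the scale-invariant form of Ptolemy, or rescales the offending vector; the lemma guarantees the vertices are distinct so this is harmless.) The only mild subtlety — and the one place to be careful — is the handling of the point $\infty\in\mathbb{CP}^1$ and the sign ambiguity of the representatives; but the latter is already fixed by the convention (\ref{det}) of choosing representatives of all shadow-colors, and the former is a standard limiting argument, so no real obstacle remains.
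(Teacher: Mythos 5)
Your proposal is correct and follows essentially the same route as the paper: both rest on the single identity $h(a_l)-h(a_k)=-\det(a_k,a_l)/(\alpha_k^{(2)}\alpha_l^{(2)})$, from which the cross-ratio formula follows by cancellation of the second coordinates and the Ptolemy identity reduces to the elementary relation (\ref{eq7}) for the numbers $v_k=h(a_k)$. Your extra remarks on the case $h(a_k)=\infty$ and on the irrelevance of $\sigma$ are sound refinements of points the paper leaves implicit.
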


\begin{proof} From (\ref{det}), we obtain
  \begin{equation}\label{eq12}
  h(x)-h(y)=\frac{x_1}{x_2}-\frac{y_1}{y_2}=\frac{\det(x,y)}{x_2 y_2},
  \end{equation}
where $x=\left(\begin{array}{cc} x_1 & x_2\end{array}\right)$ and $y=\left(\begin{array}{cc} y_1 & y_2\end{array}\right)$.

Let $a_k=\left(\begin{array}{cc} \alpha_k & \beta_k\end{array}\right)$ for $k=0,\ldots,3$, and
let $v_k=h(a_k)=\frac{\alpha_k}{\beta_k}$. Then (\ref{eq7}) and (\ref{eq12}) imply
$$\frac{\det(a_0,a_2)}{\beta_0\beta_2}\frac{\det(a_1,a_3)}{\beta_1\beta_3}
=\frac{\det(a_0,a_1)}{\beta_0\beta_1}\frac{\det(a_2,a_3)}{\beta_2\beta_3}
+\frac{\det(a_0,a_3)}{\beta_0\beta_3}\frac{\det(a_1,a_2)}{\beta_1\beta_2},$$
which is equivalent to the Ptolemy identity 
$\widehat{g}_{02}\widehat{g}_{13}=\widehat{g}_{01}\widehat{g}_{23}+\widehat{g}_{03}\widehat{g}_{12}$.

Also, using (\ref{eq12}), we obtain
  \begin{eqnarray*}
    [h(a_0),h(a_1),h(a_2),h(a_3)]=\frac{\frac{\det(a_0,a_3)}{\beta_0\beta_3}}{\frac{\det(a_1,a_3)}{\beta_1\beta_3}}
    \frac{\frac{\det(a_1,a_2)}{\beta_1\beta_2}}{\frac{\det(a_0,a_2)}{\beta_0\beta_2}}
    =\frac{\widehat{g}_{03}\widehat{g}_{12}}{\widehat{g}_{02}\widehat{g}_{13}}.
  \end{eqnarray*}

\end{proof}

Note that, by the same calculation of the proof above, we obtain
$$[h(a_0),h(a_3),h(a_1),h(a_2)]=\frac{\widehat{g}_{02}\widehat{g}_{13}}{\widehat{g}_{01}\widehat{g}_{23}},~
[h(a_0),h(a_2),h(a_3),h(a_1)]=-\frac{\widehat{g}_{01}\widehat{g}_{23}}{\widehat{g}_{03}\widehat{g}_{12}}.$$
If we put $z^\sigma=[h(a_0),h(a_1),h(a_2),h(a_3)]$, using Ptolemy identity, the above equations are expressed by
\begin{equation}\label{edge_para}
  z^\sigma=\frac{\widehat{g}_{03}\widehat{g}_{12}}{\widehat{g}_{02}\widehat{g}_{13}},
  ~\frac{1}{1-z^\sigma}=\frac{\widehat{g}_{02}\widehat{g}_{13}}{\widehat{g}_{01}\widehat{g}_{23}},
  ~1-\frac{1}{z^\sigma}=-\frac{\widehat{g}_{01}\widehat{g}_{23}}{\widehat{g}_{03}\widehat{g}_{12}}.
\end{equation}

The edge parameter $\widehat{g}_{jk}$ defined above satisfies all needed properties of the long-edge parameter $g_{jk}$ except
{\it the coincidence }, which $\widehat{g}_{jk}$ satisfies up to sign.
To see this phenomenon, consider the two edges of Figure \ref{pic08}(a) as in Figure \ref{inconsistency}, which are glued in the triangulation.
Assume the chosen representative of $a_m$ in Figure \ref{inconsistency} satisfies $a_m=-a_l*a_k\in\mathbb{C}^2\backslash\{0\}$.
(This actually happens often and quite important. For example, the minus signs of (\ref{minus1}) and (\ref{minus2}) in Section \ref{sec4}
show this situation. It will be discussed seriously at later article.) Then the edge parameters satisfy
$$\widehat{g}_{01}=\det(a_l,a_k)=\det(a_l*a_k,a_k)=-\det(a_m,a_k)=-\widehat{g}_{01}'.$$

\begin{figure}[h]
\centering  \setlength{\unitlength}{0.9cm}\thicklines
\begin{picture}(6,4)  
    \put(4,4){\vector(-1,-1){4}}
    \put(0,4){\line(1,-1){1.8}}
    \put(2.2,1.8){\vector(1,-1){1.8}}
    \put(1,3){\vector(0,-1){2}}
    \put(1,3.2){$a_l$}
    \put(1,0.7){$a_k$}
    \put(0.4,2){$\widehat{g}_{01}$}
    \put(3,1){\vector(0,1){2}}
    \put(3.2,2.9){$a_k$}
    \put(3.2,0.9){$a_m=-a_l*a_k$}
    \put(3.1,2){$\widehat{g}_{01}^\prime$}
  \end{picture}
  \caption{Example of the inconsistency of edge parameter}\label{inconsistency}
\end{figure}
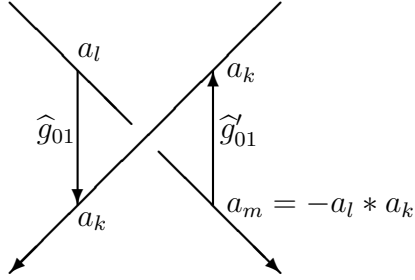

To obtain the long-edge parameter $g_{jk}$, we assign certain signs to the edge parameters
$$g_{jk}=\pm \widehat{g}_{jk},$$
so that the consistency property holds. Due to Lemma 6 of \cite{Kabaya14},
any choice of values of ${g}_{jk}$ determines the same complex volume.
Actually, in Section \ref{sec3}, we do not need the exact values of $g_{jk}$,
but we use the existence of them.

The relations (\ref{edge_para}) of the edge parameters become
\begin{equation}\label{eq13}
  z^\sigma=\pm\frac{g_{03}{g}_{12}}{{g}_{02}{g}_{13}},
  ~\frac{1}{1-z^\sigma}=\pm\frac{{g}_{02}{g}_{13}}{{g}_{01}{g}_{23}},
  ~1-\frac{1}{z^\sigma}=\pm\frac{{g}_{01}{g}_{23}}{{g}_{03}{g}_{12}}.
\end{equation}
Using (\ref{eq13}), we define integers $p$ and $q$ by
  \begin{eqnarray}\label{pq}
      \left\{\begin{array}{ll}
      p\pi i =-\log z^{\sigma}+\log g_{03}+\log g_{12}-\log g_{02}-\log g_{13},\\
      q\pi i =\log(1-z^{\sigma})+\log g_{02}+\log g_{13}-\log g_{01}-\log g_{23}.
      \end{array}\right.
  \end{eqnarray}
Now we consider the tetrahedron with the signed coordinate $\sigma(a_0,a_1,a_2,a_3)$ and the signed triples $\sigma[z^\sigma;p,q]\in\widehat{\mathcal{P}}(\mathbb{C})$.
({\it The extended pre-Bloch group} is denoted by $\widehat{\mathcal{P}}(\mathbb{C})$ here. For the definition, see Definition 1.6 of \cite{Zickert09}.)
To consider all signed triples corresponding to all tetrahedra in the triangulation, we denote the triple by $\sigma_t[z_t^{\sigma_t};p_t,q_t]$, where $t$ is the index of tetrahedra.
We define a function $\widehat{L}:\widehat{\mathcal{P}}(\mathbb{C})\rightarrow \mathbb{C}/\pi^2\mathbb{Z}$ by
\begin{equation}\label{dilo}
  [z;p,q] \mapsto \li(z)+\frac{1}{2}\log z\log(1-z)+\frac{\pi i}{2}(q\log z+p\log(1-z))-\frac{\pi^2}{6},
\end{equation}
where $\li(z)=-\int_0^z\frac{\log(1-t)}{t}dt$ is the dilogarithm function. (Well-definedness of $\widehat{L}$ was proved in \cite{Neumann04}.)
Recall that, for a boundary-parabolic representation $\rho$, the hyperbolic volume $\vol(\rho)$ and the Chern-Simons invariant $\cs(\rho)$ 
was already defined in \cite{Zickert09}. We call $\vol(\rho)+i\,\cs(\rho)$ {\it the complex volume of $\rho$}.
The following theorem is one of the main result of \cite{Kabaya14}.

\begin{thm}[\cite{Zickert09}, \cite{Kabaya14}]\label{thmvol}
For a given boundary-parabolic representation $\rho$ and the shadow-coloring induced by $\rho$,
the complex volume of $\rho$ is calculated by
$$\sum_t \sigma_t\,\widehat{L}[z_t^{\sigma_t};p_t,q_t]\equiv i(\vol(\rho)+i\,\cs(\rho))\modulo,$$
where $t$ is over all tetrahedra of the triangulation defined in Section \ref{triang}.

\end{thm}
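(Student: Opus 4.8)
The statement is essentially a repackaging of two facts, and the plan is to make the bridge between them explicit. On one side, Zickert's work \cite{Zickert09} computes $\vol(\rho)+i\,\cs(\rho)$ from an \emph{ordered, decorated} ideal triangulation realizing the developing map $D_\rho$ of $\rho$, by summing the function $\widehat{L}$ over the tetrahedra with suitable flattenings. On the other side, \cite{Kabaya14} shows that the shadow-coloring induced by $\rho$ produces precisely such data on the octahedral triangulation of Section \ref{triang}. So the first step is to recall, as already noted after the lemma identifying the Hopf images of (\ref{octa1})--(\ref{octa2-1}) as non-degenerate, that attaching the boundary points $h(a_i),h(s_j),h(p)\in\mathbb{CP}^1=\partial\mathbb{H}^3$ to the vertices of the triangulation reproduces a conjugate of $D_\rho$; consequently the cross-ratios $z_t^{\sigma_t}$ of (\ref{eq11}) are the shape parameters of an ideal triangulation with holonomy $\rho$, and the vertex-orderings recorded in (\ref{octa1})--(\ref{octa2-1}) supply the ordering that the flattening calculus requires.

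The second step is to check that the integers $p_t,q_t$ of (\ref{pq}) assemble into an honest combinatorial flattening. After representatives of all shadow-colors are fixed, the determinant is invariant under the quandle action by the Lemma above, so the edge parameters $\widehat{g}_{kl}=\det(a_k,a_l)$ obey the Ptolemy identity and (\ref{eq11}); passing to the long-edge parameters $g_{kl}=\pm\widehat{g}_{kl}$ with the coincidence property makes $\log g_{kl}$ a genuine function of the edges of the triangulation rather than of individual tetrahedra. Summing the two lines of (\ref{pq}) over all $t$ and grouping terms edge by edge, every $\log g_{kl}$ then cancels around each edge; together with the edge relations $\prod_t z_t^{\pm1}=1$ this is exactly Neumann's flattening condition, so the collection $\{\sigma_t[z_t^{\sigma_t};p_t,q_t]\}$ defines a class on which $\widehat{L}$ is well defined modulo $\pi^2$ (using the well-definedness of $\widehat{L}$ on $\widehat{\mathcal{P}}(\mathbb{C})$ from \cite{Neumann04}), and by Lemma 6 of \cite{Kabaya14} the chosen signs in $g_{kl}=\pm\widehat{g}_{kl}$ do not affect the sum.

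The third step disposes of the crossings with $h(a_k)=h(a_l)$: in (\ref{octa4}) the first two tetrahedra have identical Hopf-coordinates and opposite signs $\sigma_t$, and likewise for the last two, with the same phenomenon in (\ref{octa4-1}); since $\sigma_t\,\widehat{L}[z_t^{\sigma_t};p_t,q_t]$ depends only on these coordinates and on the branch data of (\ref{pq}), each such crossing contributes $0$ to $\sum_t\sigma_t\,\widehat{L}$. Hence the sum over the triangulation of Section \ref{triang} agrees with the sum over the non-degenerate crossings, which is the configuration treated in \cite{Kabaya14}, and applying Zickert's theorem in the form stated there gives $\sum_t\sigma_t\,\widehat{L}[z_t^{\sigma_t};p_t,q_t]\equiv i(\vol(\rho)+i\,\cs(\rho))\modulo$.

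The hard part is purely bookkeeping of orientations and signs, not geometry. The vertex-orderings of (\ref{octa1})--(\ref{octa2-1}) are normalized to agree with \cite{Cho13a} and therefore differ from those of \cite{Kabaya14}, so one must verify that $\sigma\,\widehat{L}[z^\sigma;p,q]$ is unchanged under the reorderings relating the two conventions; this reduces to the symmetry relations of $\widehat{L}$ under even permutations of the vertices (equivalently, the lifted five-term and transfer relations defining $\widehat{\mathcal{P}}(\mathbb{C})$), where one must track how $p$ and $q$ transform. One must also confirm that the geometric cancellation of the degenerate octahedra is compatible with the simplicial cycle condition underlying Kabaya's quandle-homology argument, so that discarding those octahedra still leaves a legitimately flattened triangulation of $\mathbb{S}^3\backslash(L\cup\{\pm\infty\})$.
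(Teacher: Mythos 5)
Your proposal is correct and follows essentially the same route as the paper, which simply cites Theorem 5 of \cite{Kabaya14} (itself built on Zickert's flattening formula) for this statement and then handles the cancellation of the degenerate octahedra (\ref{octa4})--(\ref{octa4-1}) as a separate remark immediately after the proof, by choosing the long-edge parameters of the paired tetrahedra to agree so that $z=z'$, $p=p'$, $q=q'$. Your three steps are precisely the content of that citation plus that remark, so there is nothing to add.
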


\begin{proof}
See Theorem 5 of \cite{Kabaya14}. 

\end{proof}

Note that the removal of the tetrahedra in (\ref{octa4}) and (\ref{octa4-1}) does not have any effect on the complex volume. 
For example, if we put $[z;p,q]$ and $-[z';p',q']$ the corresponding triples of the tetrahedron 
$(h(a_l),h(s),h(s*a_l),h(p))$ and $-(h(a_k),h(s),h(s*a_l),h(p))$ in (\ref{octa4}), respectively,
and put $\{{g}_{kl}\}, \{{g}_{kl}'\}$ the sets of long-edge parameters 
of the two tetrahedra, respectively. Then, from $h(a_l)=h(a_k)$, we obtain $z=z'$. 
Furthermore, we can choose long-edge parameters so that 
${g}_{kl}={g}_{kl}'$ holds for all pairs of edges sharing the same coordinate, 
which induces $p=p'$, $q= q'$ and $\widehat{L}[z;p,q]-\widehat{L}[z';p',q']=0$.

\section{Optimistic limit}\label{sec3}

In this Section, we will use the result of Section \ref{sec2} to redefine the optimistic limit of \cite{Cho13a} and 
construct a solution of $\mathcal{H}$.
At first, we consider a given boundary-parabolic representation $\rho$ and fix its shadow-coloring of a link diagram $D$. For the diagram, define
\text{sides} of the diagram by the lines connecting two adjacent crossings. 
(The word {\it edge} is more common than {\it side} here. However, we want to keep the word {\it edge} for the edges of a triangulation.)
For example, the diagram in Figure \ref{pic13} has eight sides. 
We assign $z_1,\ldots,z_n$ to sides of $D$ as in Figure \ref{pic13} and call them {\it side variables}.

\begin{figure}[h]
\centering
  \includegraphics[scale=0.45]{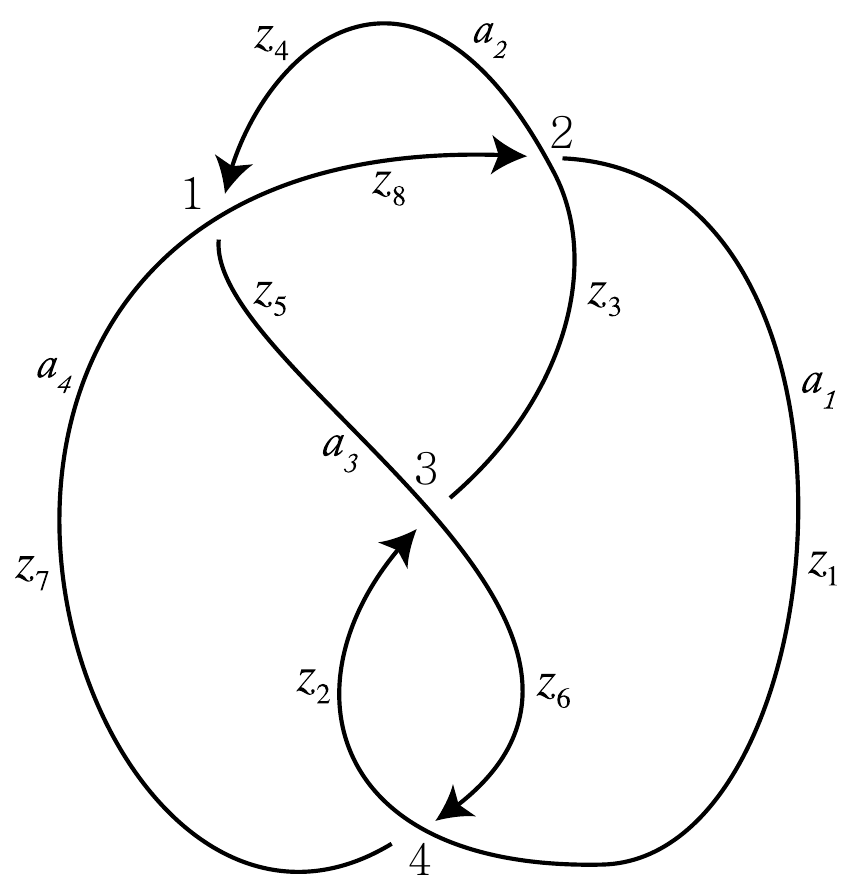}
  \caption{Sides of a link diagram}\label{pic13}
\end{figure}

For the crossing $j$ in Figure \ref{pic14}, let $z_e,z_f,z_g,z_h$ be side variables and let $a_l, a_k$ be the arc-colors.
If $h(a_k)\neq h(a_l)$, then we define the potential function $V_j$ of the crossing $j$ by
  \begin{equation}\label{potential1}
      V_j(z_e,z_f,z_g,z_h)=\li(\frac{z_f}{z_e})-\li(\frac{z_f}{z_g})+\li(\frac{z_h}{z_g})-\li(\frac{z_h}{z_e}).
   \end{equation}
   
\begin{figure}[h]
\centering  \setlength{\unitlength}{0.8cm}
\begin{picture}(5.5,4.5)\thicklines
    \put(5,4){\vector(-1,-1){4}}
    \put(2.8,2.2){\line(-1,1){1.8}}
    \put(3.2,1.8){\line(1,-1){1.8}}
    \put(5,4.2){$a_k$}
    \put(0.5,4.2){$a_l$}
    \put(2.8,1.3){$j$}
    \put(1.2,1){$z_e$}
    \put(4.3,1){$z_f$}
    \put(4.3,2.9){$z_g$}
    \put(1.2,2.9){$z_h$}
        \put(3,2){\arc[5](1,1){360}}
    \put(1.8,0.3){${\rm A}_j$}
    \put(3.8,0.3){${\rm B}_j$}
    \put(3.6,3.5){${\rm C}_j$}
    \put(1.6,3.5){${\rm D}_j$}
  \end{picture}
  \caption{A crossing $j$ with arc-colors and side variables}\label{pic14}
\end{figure}
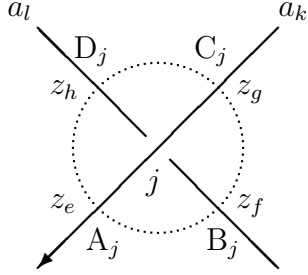

On the other hand, if $h(a_l)= h(a_k)$ in Figure \ref{pic14}, then we introduce new variables $w_e^j,w_f^j,w_g^j$ of the crossing $j$ and define 
\begin{eqnarray}
\lefteqn{V_j(z_e,z_f,z_g,z_h,w_e^j,w_f^j,w_g^j)}\label{potential2}\\
&&=-\log w_e^j \log z_e+\log w_f^j \log z_f-\log w_g^j \log z_g+\log\frac{w_e^j  w_g^j}{w_f^j}\log z_h.\nonumber
\end{eqnarray} 
For notational convenience, we put $w_h^j:={w_e^j  w_g^j}/w_f^j$.
(In (\ref{potential2}), we can choose any three variables among $w_e^j,w_f^j,w_g^j,w_h^j$ free variables.)
We call the crossing $j$ in Figure \ref{pic14} {\it degenerate} when $h(a_l)= h(a_k)$ holds.
In particular, when the degenerate crossing forms a kink, as in Figure \ref{kink}, we put
\begin{eqnarray*}
\lefteqn{V_j(z_e,z_f,z_g,w_e^j,w_f^j)}\\
&&=-\log w_e^j \log z_e+\log w_f^j \log z_f-\log w_f^j \log z_f+\log\frac{w_e^j  w_f^j}{w_f^j}\log z_g\\
&&=-\log w_e^j\log z_e+\log w_e^j \log z_g.
\end{eqnarray*} 

\begin{figure}[h]
\centering
  \includegraphics[scale=1.5]{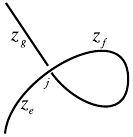}
  \caption{Kink}\label{kink}
\end{figure}

Consider the crossing $j$ in Figure \ref{pic14} and place the octahedron 
${\rm A}_j{\rm B}_j{\rm C}_j{\rm D}_j{\rm E}_j{\rm F}_j$ as in Figure \ref{twistocta}.
When the crossing $j$ is non-degenerate, in other words $h(a_k)\neq h(a_l)$, 
we consider Figure \ref{twistocta}(b) and assign shape parameters $\frac{z_f}{z_e}$, $\frac{z_g}{z_f}$, $\frac{z_h}{z_g}$ and $\frac{z_e}{z_h}$
to the horizontal edges ${\rm A}_j{\rm B}_j$, ${\rm B}_j{\rm C}_j$, ${\rm C}_j{\rm D}_j$, ${\rm D}_j{\rm A}_j$, respectively.
On the other hand, if the crossing $j$ is degenerate, in other words $h(a_k)= h(a_l)$, then we consider Figure \ref{twistocta}(c) and assign shape parameters $w_e^j,w_f^j,w_g^j$ and $w_h^j$ to
the edges ${\rm A}_j{\rm F}_j$, ${\rm B}_j{\rm E}_j$, ${\rm C}_j{\rm F}_j$ and ${\rm D}_j{\rm E}_j$, respectively.\footnote{
Note that, when $h(a_k)= h(a_l)$, by adding one more edge ${\rm B}_j{\rm D}_j$ to Figure \ref{twistocta}(c),
we obtain another subdivision of the octahedron with five tetrahedra. (This subdivision was already used in \cite{Cho13c}.)
Focusing on the middle tetrahedron  
that contains all horizontal edges, we obtain $w_e^j w_g^j =w_f^j w_h^j$. Furthermore, the shape-parameters assigned to
${\rm D}_j{\rm F}_j$ and ${\rm B}_j{\rm F}_j$ are
$\frac{1-1/w_e^j}{1-w_g^j}$ and $\frac{1-1/w_g^j}{1-w_e^j}$, respectively.}

The {\it potential function} $V(z_1,\ldots,z_n,w_k^j,\ldots)$ of the link diagram $D$ is defined by
\begin{equation*}
  V(z_1,\ldots,z_n,w_k^j,\ldots)=\sum_j V_j,
\end{equation*}
where $j$ is over all crossings. For example, if $h(a_1)\neq h(a_2)$ in Figure \ref{pic13}, then
$a_4=a_1*a_2$ implies\footnote{
If $h(a_4)=h(a_2)$, then $h(a_2*a_2)=h(a_2)=h(a_4)=h(a_1*a_2)$ induces $h(a_2)=h(a_1)$,
which is contradiction.}
 $h(a_4)\neq h(a_2)$, $a_2=a_1*a_3$ does\footnote{
If $h(a_2)=h(a_3)$, then $h(a_3*a_3)=h(a_3)=h(a_2)=h(a_1*a_3)$ induces $h(a_2)=h(a_3)=h(a_1)$,
which is contradiction. Likewise, if $h(a_1)=h(a_3)$, then $h(a_2)=h(a_1*a_3)=h(a_1)$ is contradiction.}
 $ h(a_2)\neq h(a_3)\neq h(a_1)$,
$a_2=a_3*a_4$ does $h(a_4)\neq h(a_3)$, $a_4=a_3*a_1$ does $h(a_4)\neq h(a_1)$,
and the potential function becomes
\begin{eqnarray}
  V(z_1,\ldots,z_8)&=&\left\{\li(\frac{z_5}{z_7})-\li(\frac{z_5}{z_8})+\li(\frac{z_4}{z_8})-\li(\frac{z_4}{z_7})\right\}\label{V41}\\
           &+&\left\{\li(\frac{z_1}{z_3})-\li(\frac{z_1}{z_4})+\li(\frac{z_8}{z_4})-\li(\frac{z_8}{z_3})\right\}\nonumber\\
           &+&\left\{\li(\frac{z_3}{z_6})-\li(\frac{z_3}{z_5})+\li(\frac{z_2}{z_5})-\li(\frac{z_2}{z_6})\right\}\nonumber\\
           &+&\left\{\li(\frac{z_6}{z_1})-\li(\frac{z_6}{z_2})+\li(\frac{z_7}{z_2})-\li(\frac{z_7}{z_1})\right\}.\nonumber
\end{eqnarray}
Note that, if $h(a_l)\neq h(a_k)$ for any crossing $j$ in Figure \ref{pic14}, then the definition of the potential function above
coincides with the definition in Section 2 of \cite{Cho13a}. Therefore, the above definition is a slight modification of the previous one.

On the other hand, if $h(a_1)=h(a_2)$ in Figure \ref{pic13}, then $a_1*a_2=a_1$.
This equation and the relations at crossings induce\footnote{
The relation $a_4=a_1*a_2$ induces $a_4=a_1$, $a_4=a_3*a_1$ does $a_4=a_3$, and $a_2=a_3*a_4$ does $a_2=a_4$.} 
$a_1=a_2=a_3=a_4$, 
and the potential function becomes
\begin{eqnarray*}
  \lefteqn{V(z_1,\ldots,z_8,w_8^1,w_4^1,w_7^1, w_4^2,w_8^2,w_3^2, w_6^3,w_3^3,w_5^3, w_2^4,w_7^4,w_1^4)}\\
    &&=-\log w_8^1\log z_8 +\log w_4^1\log z_4-\log w_7^1\log z_7 +\log w_5^1\log z_5\\
    && ~~- \log w_4^2\log z_4 +\log w_8^2\log z_8 -\log w_3^2 \log z_3 +\log w_1^2\log z_1\\
    &&~~-\log w_6^3 \log z_6 +\log w_3^3 \log z_3-\log w_5^3 \log z_5+\log w_2^3\log z_2\\
    &&~~-\log w_2^4 \log z_2+\log w_7^4 \log z_7-\log w_1^4 \log z_1+\log w_6^4\log z_6 ,
\end{eqnarray*}
where $w_5^1=w_8^1  w_7^1/w_4^1$, $w_1^2=w_4^2  w_3^2/w_8^2$,
$w_2^3=w_6^3  w_5^3/w_3^3$ and $w_6^4=w_2^4  w_1^4/w_7^4$.

For the potential function $V(z_1,\ldots,z_n,w_k^j,\ldots)$, let $\mathcal{H}$ be the set of equations
\begin{equation}\label{H}
\mathcal{H}:=\left\{\left.\exp(z_k \frac{\partial V}{\partial z_k})=1,
  \exp(w_k^j\frac{\partial V}{\partial w_k^j})=1\right| k=1,\ldots,n,\,j:\text{degenerate}\right\},
\end{equation}
and $\mathcal{S}=\{(z_1,\ldots,z_n,w_k^j,\ldots)\}$ be the {solution} set of $\mathcal{H}$.
Here, solutions are assumed to satisfy the properties that $z_k\neq 0$ for all $k=1,\ldots, n$ and
$\frac{z_f}{z_e}\neq 1$, $\frac{z_g}{z_f}\neq 1$, $\frac{z_h}{z_g}\neq 1$, 
$\frac{z_e}{z_h}\neq 1$, $\frac{z_g}{z_e}\neq 1$, $\frac{z_h}{z_f}\neq 1$ in Figure \ref{pic14} for any non-degenerate crossing,
and $w_k^j\neq 0$ for any degenerate crossing $j$ and the index $k$.
(All these assumptions are essential to avoid singularity of the equations in $\mathcal{H}$ and
$\log 0$ in the formula $V_0$ defined in (\ref{V0}). Even though we allow $w_k^j=1$ here,
the value we are interested in always satisfies $w_k^j\neq 1$.)

\begin{pro}\label{pro1} For the arc-coloring of a link diagram $D$ induced by $\rho$ and the potential function $V(z_1,\ldots,z_n,w_k^j,\ldots)$, 
the set $\mathcal{H}$ induces the whole set of hyperbolicity equations of the octahedral triangulation defined in Section \ref{triang}.
\end{pro}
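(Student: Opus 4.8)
The plan is to compute the partial derivatives $z_k \frac{\partial V}{\partial z_k}$ and $w_k^j \frac{\partial V}{\partial w_k^j}$ explicitly and match the resulting equations, crossing by crossing, with the gluing (edge) equations of the octahedral triangulation from Section \ref{triang}. I would organize the argument around the two local contributions: a non-degenerate crossing contributes a term of the form $V_j=\li(z_f/z_e)-\li(z_f/z_g)+\li(z_h/z_g)-\li(z_h/z_e)$, and a degenerate crossing contributes the bilinear-in-logarithms term \eqref{potential2}. Since each side variable $z_k$ appears in exactly the $V_j$'s coming from the (at most two) crossings at the ends of that side, the equation $\exp(z_k\partial V/\partial z_k)=1$ is a product of at most two local contributions, one from each crossing bordering side $k$.

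First I would record the basic computation: using $\frac{d}{dx}\li(x)=-\frac{\log(1-x)}{x}$, one gets for a non-degenerate crossing $j$ that $z_e\frac{\partial V_j}{\partial z_e}=\log\bigl(1-\tfrac{z_f}{z_e}\bigr)-\log\bigl(1-\tfrac{z_h}{z_e}\bigr)$ and similarly for $z_f,z_g,z_h$, so that exponentiating turns each side equation into a rational (multiplicative) identity among the shape parameters $\tfrac{z_f}{z_e},\tfrac{z_g}{z_f},\tfrac{z_h}{z_g},\tfrac{z_e}{z_h}$ assigned to the horizontal edges ${\rm A}_j{\rm B}_j$, etc., in Figure \ref{twistocta}(b), together with the auxiliary ratios $\tfrac{z_g}{z_e}$ and $\tfrac{z_h}{z_f}$. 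For a degenerate crossing, $w_k^j\frac{\partial V_j}{\partial w_k^j}$ is a $\mathbb{Z}$-linear combination of $\log z_e,\log z_f,\log z_g,\log z_h$, so the equations $\exp(w_k^j\partial V/\partial w_k^j)=1$ read off directly as monomial relations among the $z$'s, and the $z$-derivative equations at such a crossing become monomial relations among the $w_k^j$'s; these are exactly the edge equations for the four-tetrahedron subdivision \eqref{octa2}/\eqref{octa2-1} (with the observation, already noted after \eqref{octa4-1}, that the degenerate octahedron's tetrahedra cancel, so its edge equations are the degenerate ones $w_e^j w_g^j=w_f^j w_h^j$, etc., matching the footnote computation in Section \ref{sec3}).

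Next I would go through the three gluing patterns of Figure \ref{glue pattern}, and through the edge classes of the octahedral triangulation: the four horizontal edges ${\rm A}_j{\rm B}_j,{\rm B}_j{\rm C}_j,{\rm C}_j{\rm D}_j,{\rm D}_j{\rm A}_j$ around each crossing, the edges ${\rm E}_j{\rm F}_j$ (or ${\rm A}_j{\rm C}_j$ in the degenerate case), and the edges running along the link. For each edge of the triangulation one writes the product of shape parameters of the tetrahedra around it equal to $1$; one then checks this is precisely one of the equations produced by $\mathcal{H}$. The key bookkeeping point is that a side variable $z_k$ of the diagram corresponds to (a ratio at) a specific horizontal edge, shared between the two octahedra at the crossings it connects, and the gluing identifications in Figure \ref{glue pattern} are exactly what makes the two local contributions to $z_k\partial V/\partial z_k$ combine into the single edge equation around that identified edge. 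I would also verify the count: $n$ side variables plus three $w$-variables per degenerate crossing, versus the number of edges of the triangulation, so that "$\mathcal{H}$ induces the whole set" — no edge equation is missed — and that the (always redundant) final edge equation is the one omitted, consistent with standard triangulation theory.

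The main obstacle I expect is the degenerate case: tracking which tetrahedra of \eqref{octa2}/\eqref{octa2-1} survive after the geometric cancellation, getting the signs and orientations in $z_k\partial V_j/\partial z_k$ to line up with the ${\rm F}_j{\rm A}_j{\rm C}_j{\rm D}_j$-type tetrahedra and their shape parameters $w_e^j,w_f^j,w_g^j,w_h^j$ and the auxiliary parameters $\frac{1-1/w_e^j}{1-w_g^j}$ on ${\rm D}_j{\rm F}_j$, ${\rm B}_j{\rm F}_j$ recorded in the footnote, and making sure the relation $w_h^j=w_e^j w_g^j/w_f^j$ is consistent with — and not independent of — the edge equations. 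The non-degenerate part is essentially the verification already implicit in \cite{Cho13a}, so I would lean on that; the genuinely new content is handling the degenerate crossings and the kink case uniformly, and confirming that the enlarged equation set $\mathcal{H}$ (with the extra $w$-variables and their equations) is neither short of nor redundant beyond the usual single redundancy relative to the full edge-equation system of the octahedral triangulation.
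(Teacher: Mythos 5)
Your overall strategy is the paper's: compute $z_k\partial V/\partial z_k$ and $w_k^j\partial V/\partial w_k^j$ locally, observe that each side equation is a product of the two contributions from the crossings at its ends, lean on \cite{Cho13a} for the all-non-degenerate case, and then do the genuinely new work at degenerate crossings by running through the gluing patterns (the paper uses four cases, with a degenerate crossing $j$ glued to a non-degenerate $j+1$, and identifies each resulting equation as either a completeness condition along a meridian or a gluing equation at a specific point of the cusp diagram). Your observation that the $w$-derivative equations force $z_e=z_f=z_g=z_h$, which trivializes the horizontal-edge gluing equations at a degenerate crossing, is exactly the paper's first step.

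The one step of your plan that would not go through as stated is the final ``verify the count'' argument. The equations of $\mathcal{H}$ are not in bijection with the edge classes of the triangulation: several of them are completeness conditions along meridians rather than edge equations, each $z_k$-equation is a product of two local pieces rather than a single edge relation, and the full system of hyperbolicity equations carries more than one redundancy (one per edge-cycle/cusp structure), so matching cardinalities would neither be achievable nor sufficient to conclude that every gluing equation is implied. What is actually needed — and what the paper invokes — is the structural fact from the discussion following Lemma 3.1 of \cite{Cho13a}: the specific meridian completeness conditions and the specific edge equations that arise from $\mathcal{H}$ generate, by taking products around the cusp diagram, all remaining gluing and completeness equations. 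If you replace the counting check with that generation argument (and verify it still applies after the degenerate octahedra are removed, which the paper does by noting the horizontal shape parameters there equal $1$), your proof is complete. Also note that $w_h^j=w_e^jw_g^j/w_f^j$ is a definition, not an equation to be derived from $\mathcal{H}$, so no consistency issue arises there.
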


The {\it hyperbolicity equations} consist of the Thurston's gluing equations of edges and the completeness condition.

\begin{proof}[Proof of Proposition \ref{pro1}]
When no crossing is degenerate, this proposition was already proved in Section 3 of \cite{Cho13a}. 
To see the main idea, check Figures 10--13 and equations (3.1)--(3.3) of \cite{Cho13a}.
Equation (3.1) is a completeness condition along a meridian of certain annulus, and (3.2)--(3.3) are gluing equations of certain edges.
These three types of equations induce all the other gluing equations.

Therefore, we consider the case when the crossing $j$ in Figure \ref{pic14} is degenerate. Then, the following three equations
\begin{equation}\label{partialwj}
\exp(w_e^j\frac{\partial V}{\partial w_e^j})=\frac{z_h}{z_e}=1,~
\exp(w_f^j\frac{\partial V}{\partial w_f^j})=\frac{z_f}{z_h}=1,~
\exp(w_g^j\frac{\partial V}{\partial w_g^j})=\frac{z_h}{z_g}=1
\end{equation}
induce $z_e= z_f= z_g= z_h$. This guarantees the gluing equations of horizontal edges trivially by the assigning rule of
shape parameters.
(Note that the shape parameters assigned to the horizontal edges of the octahedron at a degenerate crossing are always 1.)

There are four possible cases of gluing pattern as in Figure \ref{cases}, 
and we assume the crossing $j$ is degenerate and $j+1$ is non-degenerate.
(The case when both of $j$ and $j+1$ are degenerate can be proved similarly.)

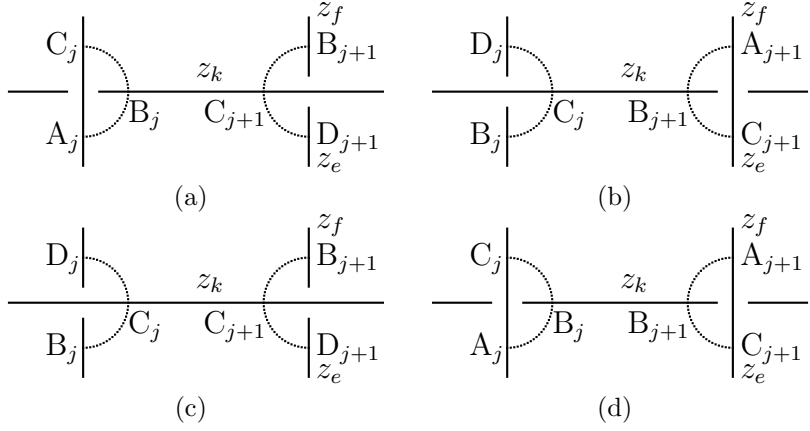
\begin{figure}[h]
\centering
  \subfigure[]
  {\begin{picture}(5,2)\thicklines
   \put(1,1){\arc[5](0,-0.6){180}}
   \put(4,1){\arc[5](0,0.6){180}}
   \put(1.2,1){\line(1,0){3.8}}
   \put(1,2){\line(0,-1){2}}
   \put(4,0){\line(0,1){0.8}}
   \put(4,2){\line(0,-1){0.8}}
   \put(0.8,1){\line(-1,0){0.8}}
   \put(0.5,0.3){${\rm A}_j$}
   \put(1.6,0.6){${\rm B}_j$}
   \put(0.5,1.5){${\rm C}_j$}
   \put(4.1,0.3){${\rm D}_{j+1}$}
   \put(2.6,0.6){${\rm C}_{j+1}$}
   \put(4.1,1.5){${\rm B}_{j+1}$}
   \put(2.5,1.2){$z_k$}
   \put(4.1,0){$z_e$}
   \put(4.1,2){$z_f$}
  \end{picture}}\hspace{0.5cm}
  \subfigure[]
  {\begin{picture}(5,2)\thicklines
   \put(1,1){\arc[5](0,-0.6){180}}
   \put(4,1){\arc[5](0,0.6){180}}
   \put(0,1){\line(1,0){3.8}}
   \put(4.2,1){\line(1,0){0.8}}
   \put(1,2){\line(0,-1){0.8}}
   \put(1,0){\line(0,1){0.8}}
   \put(4,0){\line(0,1){2}}
   \put(0.5,0.3){${\rm B}_j$}
   \put(1.6,0.6){${\rm C}_j$}
   \put(0.5,1.5){${\rm D}_j$}
   \put(4.1,0.3){${\rm C}_{j+1}$}
   \put(2.6,0.6){${\rm B}_{j+1}$}
   \put(4.1,1.5){${\rm A}_{j+1}$}
   \put(2.5,1.2){$z_k$}
   \put(4.1,0){$z_e$}
   \put(4.1,2){$z_f$}
  \end{picture}}\\
  \subfigure[]
  {\begin{picture}(5,2)\thicklines
   \put(1,1){\arc[5](0,-0.6){180}}
   \put(4,1){\arc[5](0,0.6){180}}
   \put(5,1){\line(-1,0){5}}
   \put(1,2){\line(0,-1){0.8}}
   \put(1,0){\line(0,1){0.8}}
   \put(4,2){\line(0,-1){0.8}}
   \put(4,0){\line(0,1){0.8}}
   \put(0.5,0.3){${\rm B}_j$}
   \put(1.6,0.6){${\rm C}_j$}
   \put(0.5,1.5){${\rm D}_j$}
   \put(4.1,0.3){${\rm D}_{j+1}$}
   \put(2.6,0.6){${\rm C}_{j+1}$}
   \put(4.1,1.5){${\rm B}_{j+1}$}
   \put(2.5,1.2){$z_k$}
   \put(4.1,0){$z_e$}
   \put(4.1,2){$z_f$}
  \end{picture}}\hspace{0.5cm}
  \subfigure[]
  {\begin{picture}(5,2)\thicklines
   \put(1,1){\arc[5](0,-0.6){180}}
   \put(4,1){\arc[5](0,0.6){180}}
   \put(4.2,1){\line(1,0){0.8}}
   \put(1,2){\line(0,-1){2}}
   \put(0.8,1){\line(-1,0){0.8}}
   \put(4,2){\line(0,-1){2}}
   \put(1.2,1){\line(1,0){2.6}}
   \put(0.5,0.3){${\rm A}_j$}
   \put(1.6,0.6){${\rm B}_j$}
   \put(0.5,1.5){${\rm C}_j$}
   \put(4.1,0.3){${\rm C}_{j+1}$}
   \put(2.6,0.6){${\rm B}_{j+1}$}
   \put(4.1,1.5){${\rm A}_{j+1}$}
   \put(2.5,1.2){$z_k$}
   \put(4.1,0){$z_e$}
   \put(4.1,2){$z_f$}
  \end{picture}}
  \caption{Four cases of gluing pattern}\label{cases}
\end{figure} 

The part of the potential function $V$ containing $z_k$ in Figure \ref{cases}(a) is
\begin{equation*}
  V^{(a)}=\log w_k^j\log z_k+\li\left(\frac{z_e}{z_k}\right)-\li\left(\frac{z_f}{z_k}\right),
\end{equation*}
and
\begin{equation*}
  \exp\left(z_k\frac{\partial V}{\partial z_k}\right)=
  \exp\left(z_k\frac{\partial V^{(a)}}{\partial z_k}\right)={w_k^j}\left(1-\frac{z_e}{z_k}\right)\left(1-\frac{z_f}{z_k}\right)^{-1}=1
\end{equation*}
is equivalent with the following completeness condition 
$$\frac{1}{w_k^j}\left(1-\frac{z_e}{z_k}\right)^{-1}\left(1-\frac{z_f}{z_k}\right)=1$$
along a meridian $m$ in Figure \ref{cases2}(a).
(Compare it with Figure 11 of \cite{Cho13a}.) 
Here, $a_j$, $b_j$, $c_j$, $b_{j+1}$, $c_{j+1}$, $d_{j+1}$ in Figure \ref{cases2}(a) are the points of the cusp diagram, 
which lie on the edges ${\rm A}_j{\rm E}_j$, ${\rm B}_j{\rm E}_j$, ${\rm C}_j{\rm E}_j$,
${\rm B}_{j+1}{\rm F}_{j+1}$, ${\rm C}_{j+1}{\rm F}_{j+1}$, ${\rm D}_{j+1}{\rm F}_{j+1}$
of Figure \ref{twistocta}(a), respectively.

\begin{figure}[h]
\centering
  \subfigure[]{\includegraphics[scale=0.72]{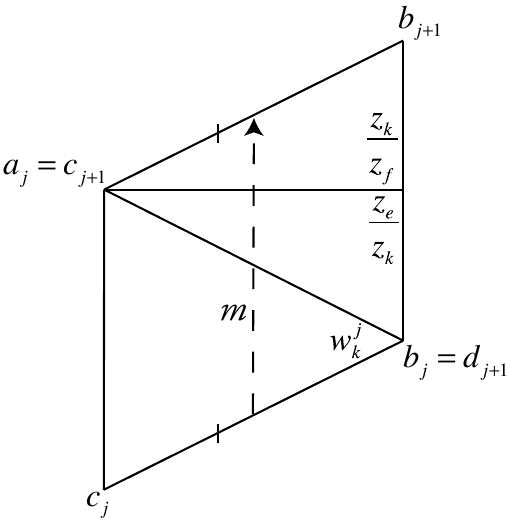}}
  \subfigure[]{\includegraphics[scale=0.72]{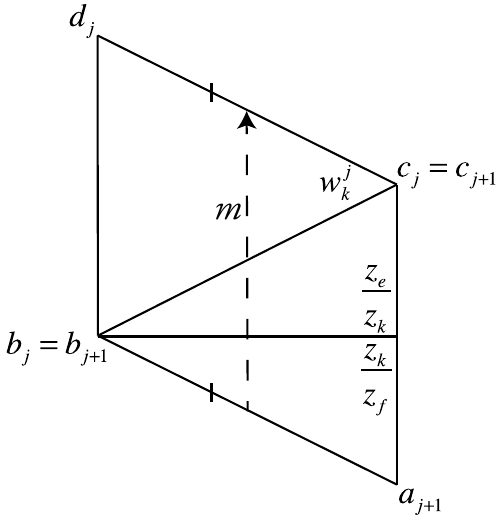}}\\
  \subfigure[]{\includegraphics[scale=0.75]{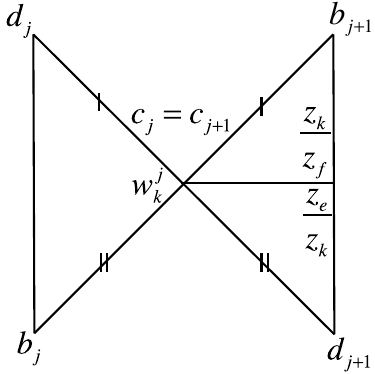}}\hspace{1cm}
  \subfigure[]{\includegraphics[scale=0.76]{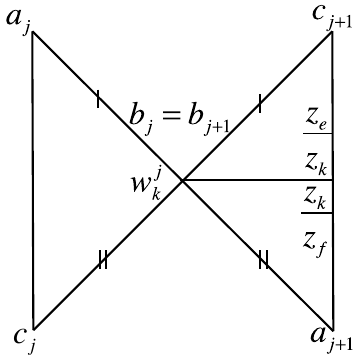}}
  \caption{Four cusp diagrams from Figure \ref{cases}}\label{cases2}
\end{figure}

The part of the potential function $V$ containing $z_k$ in Figure \ref{cases}(b) is
\begin{equation*}
  V^{(b)}=-\log w_k^j\log z_k-\li\left(\frac{z_k}{z_e}\right)+\li\left(\frac{z_k}{z_f}\right),
\end{equation*}
and
\begin{equation*}
  \exp\left(z_k\frac{\partial V}{\partial z_k}\right)=
  \exp\left(z_k\frac{\partial V^{(b)}}{\partial z_k}\right)=\frac{1}{w_k^j}\left(1-\frac{z_k}{z_e}\right)\left(1-\frac{z_k}{z_f}\right)^{-1}=1
\end{equation*}
is equivalent with the following completeness condition 
$$\frac{1}{w_k^j}\left(1-\frac{z_k}{z_f}\right)^{-1}\left(1-\frac{z_k}{z_e}\right)=1$$
along a meridian $m$ in Figure \ref{cases2}(b).
Here, $b_j$, $c_j$, $d_j$, $a_{j+1}$, $b_{j+1}$, $c_{j+1}$ in Figure \ref{cases2}(b) are the points of the cusp diagram, 
which lie on the edges ${\rm B}_j{\rm F}_j$, ${\rm C}_j{\rm F}_j$, ${\rm D}_j{\rm F}_j$,
${\rm A}_{j+1}{\rm E}_{j+1}$, ${\rm B}_{j+1}{\rm E}_{j+1}$, ${\rm C}_{j+1}{\rm E}_{j+1}$
of Figure \ref{twistocta}(a), respectively.
(To simplify the cusp diagram in Figure \ref{cases2}(b), we subdivided the polygon ${\rm A}_j{\rm B}_j{\rm C}_j{\rm D}_j{\rm F}_j$
in Figure \ref{twistocta}(c) into three tetrahedra by adding the edge ${\rm B}_j{\rm D}_j$.)

The part of the potential function $V$ containing $z_k$ in Figure \ref{cases}(c) is
\begin{equation*}
  V^{(c)}=-\log w_k^j\log z_k+\li\left(\frac{z_e}{z_k}\right)-\li\left(\frac{z_f}{z_k}\right),
\end{equation*}
and
\begin{equation*}
  \exp\left(z_k\frac{\partial V}{\partial z_k}\right)=
    \exp\left(z_k\frac{\partial V^{(c)}}{\partial z_k}\right)=\frac{1}{w_k^j}\left(1-\frac{z_e}{z_k}\right)\left(1-\frac{z_f}{z_k}\right)^{-1}=1
\end{equation*}
is equivalent with the following gluing equation
$${w_k^j}\left(1-\frac{z_e}{z_k}\right)^{-1}\left(1-\frac{z_f}{z_k}\right)=1$$
of $c_j=c_{j+1}$ in Figure \ref{cases2}(c).
(Compare it with Figure 12 of \cite{Cho13a}.)
Here, $b_j$, $c_j$, $d_j$, $b_{j+1}$, $c_{j+1}$, $d_{j+1}$ in Figure \ref{cases2}(c) are the points of the cusp diagram, 
which lie on the edges ${\rm B}_j{\rm F}_j$, ${\rm C}_j{\rm F}_j$, ${\rm D}_j{\rm F}_j$,
${\rm B}_{j+1}{\rm F}_{j+1}$, ${\rm C}_{j+1}{\rm F}_{j+1}$, ${\rm D}_{j+1}{\rm F}_{j+1}$
of Figure \ref{twistocta}(a), respectively,
and the edges $d_j c_j$ and $b_j c_j$ are identified to $b_{j+1}c_{j+1}$ and $d_{j+1}c_{j+1}$, respectively.
(To simplify the cusp diagram in Figure \ref{cases2}(c), we subdivided the polygon ${\rm A}_j{\rm B}_j{\rm C}_j{\rm D}_j{\rm F}_j$
in Figure \ref{twistocta}(c) into three tetrahedra by adding the edge ${\rm B}_j{\rm D}_j$.)

The part of the potential function $V$ containing $z_k$ in Figure \ref{cases}(d) is
\begin{equation*}
  V^{(d)}=\log w_k^j\log z_k-\li\left(\frac{z_k}{z_e}\right)+\li\left(\frac{z_k}{z_f}\right),
\end{equation*}
and
\begin{equation*}
  \exp\left(z_k\frac{\partial V}{\partial z_k}\right)=
  \exp\left(z_k\frac{\partial V^{(d)}}{\partial z_k}\right)={w_k^j}\left(1-\frac{z_k}{z_e}\right)\left(1-\frac{z_k}{z_f}\right)^{-1}=1
\end{equation*}
is equivalent with the following gluing equation
$${w_k^j}\left(1-\frac{z_k}{z_e}\right)\left(1-\frac{z_k}{z_f}\right)^{-1}=1$$
of $b_j=b_{j+1}$ in Figure \ref{cases2}(d).
(Compare it with Figure 13 of \cite{Cho13a}.)
Here, $a_j$, $b_j$, $c_j$, $a_{j+1}$, $b_{j+1}$, $c_{j+1}$ in Figure \ref{cases2}(d) are the points of the cusp diagram, 
which lie on the edges ${\rm A}_j{\rm E}_j$, ${\rm B}_j{\rm E}_j$, ${\rm C}_j{\rm E}_j$,
${\rm A}_{j+1}{\rm E}_{j+1}$, ${\rm B}_{j+1}{\rm E}_{j+1}$, ${\rm C}_{j+1}{\rm E}_{j+1}$
of Figure \ref{twistocta}(a), respectively,
and the edges $a_j b_j$ and $c_j b_j$ are identified to $c_{j+1}b_{j+1}$ and $a_{j+1}b_{j+1}$, respectively.

Note that the case when both of the crossings $j$ and $j+1$ in Figure \ref{cases} are degenerate can be proved by the same way.

On the other hand, it was already shown  in \cite{Cho13a} that all hyperbolicity equations are induced by these types of equations 
(see the discussion that follows Lemma 3.1 of \cite{Cho13a}), so the proof is done.

\end{proof}

In \cite{Cho13a}, we could not prove the existence of a solution of $\mathcal{H}$, in other words $\mathcal{S}\neq \emptyset$,
so we assumed it. However, the following theorem proves the existence by directly constructing one solution
from the given boundary-parabolic representation $\rho$ together with the shadow-coloring.

\begin{thm}\label{thm1}
 Consider a shadow-coloring of a link diagram $D$ induced by $\rho$ 
 and 
 the potential function $V(z_1,\ldots,z_n, w_k^j,\ldots)$ from $D$.
 For each side of $D$ with the side variable $z_k$, arc-color $a_l$ and the region-color $s$, as in Figure \ref{pic18}, we define
 \begin{equation}\label{sol1}
   z_k^{(0)}:=\frac{\det(a_l,p)}{\det(a_l,s)}.
 \end{equation}
 Also, if the positive crossing $j$ in Figure \ref{ze=zf}(a) is degenerate, then we define
  \begin{eqnarray*}
   (w_e^j)^{(0)}:=\frac{\det(s,p)}{\det(s*a_k,p)},~
   (w_f^j)^{(0)}:=\frac{\det( (s*a_l)*a_k,p)}{\det( s*a_k,p)},\label{sol2}\\
   (w_g^j)^{(0)}:=\frac{\det( (s*a_l)*a_k,p)}{\det( s*a_l,p)},~
   (w_h^j)^{(0)}:=\frac{\det(s,p)}{\det( s*a_l,p)},\nonumber
 \end{eqnarray*}
and, if the negative crossing $j$ in Figure \ref{ze=zf}(b) is degenerate, then we define
  \begin{eqnarray*}
   (w_e^j)^{(0)}:=\frac{\det( s*a_l,p)}{\det( (s*a_l)*a_k,p)},~
   (w_f^j)^{(0)}:=\frac{\det( s*a_k,p)}{\det( (s*a_l)*a_k,p)},\label{sol2}\\
   (w_g^j)^{(0)}:=\frac{\det( s*a_k,p)}{\det(s,p)},~
   (w_h^j)^{(0)}:=\frac{\det( s*a_l,p)}{\det(s,p)}.\nonumber
 \end{eqnarray*}

 Then $z_k^{(0)}\neq 0,1,\infty$, $(w_k^j)^{(0)}\neq 0,1$ for all possible $j, k$, 
 and $(z_1^{(0)},\ldots,z_n^{(0)},(w_k^j)^{(0)},\ldots)\in\mathcal{S}$.
\end{thm}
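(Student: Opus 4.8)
The plan is to reduce the whole statement to Proposition \ref{pro1} together with the fact, recalled at the end of Section \ref{triang}, that the coordinates (\ref{octa1})--(\ref{octa2-1}) realize the developing map $D_\rho$ of \cite{Zickert09}. By Proposition \ref{pro1} the set $\mathcal{H}$ is exactly the system of Thurston gluing equations and the completeness condition of the octahedral triangulation, and the shape parameters coming from a developing map automatically solve that system. So it suffices to check two things: that the tuple $(z_1^{(0)},\dots,(w_k^j)^{(0)},\dots)$ satisfies the non-degeneracy required for membership in $\mathcal{S}$, and that the shape parameters it assigns to the octahedra --- namely $z_f^{(0)}/z_e^{(0)},\,z_g^{(0)}/z_f^{(0)},\,z_h^{(0)}/z_g^{(0)},\,z_e^{(0)}/z_h^{(0)}$ on the horizontal edges of a non-degenerate octahedron, and $(w_e^j)^{(0)},(w_f^j)^{(0)},(w_g^j)^{(0)},(w_h^j)^{(0)}$ at a degenerate one --- coincide with the cross-ratios of the tetrahedra (\ref{octa3})--(\ref{octa4-1}) computed from those coordinates.

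For non-degeneracy I would use two elementary facts. First, (\ref{eq12}) gives $\det(x,y)=0$ iff $h(x)=h(y)$; hence the denominator $\det(a_l,s)$ of (\ref{sol1}) is non-zero by Lemma \ref{lem1}, the numerator $\det(a_l,p)$ is non-zero by (\ref{defp}), and every $\det(r,p)$ appearing in a $(w_k^j)^{(0)}$ with $r$ a region-color is non-zero by (\ref{defp}); this yields $z_k^{(0)}\neq 0,\infty$ and $(w_k^j)^{(0)}\neq 0,\infty$. Second, since $*c$ is linear and $\det(a*c,b*c)=\det(a,b)$, one has the identity $s-s*a=\det(a,s)\,a$, so $h(s-s*a)=h(a)$ whenever $h(a)\neq h(s)$. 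Using this, the numerator minus the denominator of each $(w_k^j)^{(0)}$ collapses to a product of the form $\pm\det(a,s)\det(b,p)$, where $a$ is an arc-color adjacent to a region $s$ and $b$ is, up to a non-zero scalar, one of the arc-colors incident to the crossing (including $a_l*a_k$); both factors are non-zero by Lemma \ref{lem1} and (\ref{defp}), so $(w_k^j)^{(0)}\neq 1$. For $z_k^{(0)}\neq 1$ --- and likewise for the two extra conditions $z_g^{(0)}/z_e^{(0)}\neq 1$, $z_h^{(0)}/z_f^{(0)}\neq 1$ at a non-degenerate crossing, which after cancellation of the $\det(\cdot,p)$ factors reduce to ratios of determinants handled by the same identity --- I would observe that (\ref{sol1}) depends on a chosen representative of $p$ in $\mathbb{C}^2\backslash\{0\}$, that replacing it by $\lambda p$ multiplies every $z_k^{(0)}$ by the common scalar $\lambda$ while leaving $h(p)$ and every $(w_k^j)^{(0)}$ unchanged, and that membership in $\mathcal{S}$ is unaffected because each equation of $\mathcal{H}$ is invariant under a simultaneous rescaling of all the $z_k$'s; avoiding the finitely many $\lambda$ with some $\lambda z_k^{(0)}=1$ finishes the point. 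The remaining four ratios at a non-degenerate crossing are the cross-ratios of non-degenerate tetrahedra, hence lie in $\mathbb{C}\backslash\{0,1\}$ automatically.

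The core of the proof is the identification of shape parameters with cross-ratios. Using (\ref{eq11}), the cross-ratio of a tetrahedron with coordinate $\sigma(a_0,a_1,a_2,a_3)$ and fourth vertex $a_3=p$ is $\det(a_0,p)\det(a_1,a_2)\,/\,\det(a_0,a_2)\det(a_1,p)$; substituting $z_k^{(0)}=\det(a_l,p)/\det(a_l,s)$ and repeatedly applying $\det(a*c,b*c)=\det(a,b)$, the $\det(\cdot,p)$ factors contributed by the sides around a crossing match up so that $z_f^{(0)}/z_e^{(0)}$, read with the arc- and region-colors of Figures \ref{crossings} and \ref{pic14}, equals the cross-ratio of the tetrahedron on the corresponding horizontal edge --- and similarly for the other three horizontal edges and for both crossing signs. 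At a degenerate crossing all four incident arc-colors coincide, so (\ref{sol1}) together with $\det(a*c,b*c)=\det(a,b)$ forces $z_e^{(0)}=z_f^{(0)}=z_g^{(0)}=z_h^{(0)}$ (consistently with the horizontal shape parameters there being $1$) while the octahedron is geometrically removed; what must then hold, by the proof of Proposition \ref{pro1}, are the completeness/gluing relations read off the cusp diagrams of Figure \ref{cases2} (the ``$1/w_k^j$'' equations), and one checks these by plugging in (\ref{sol1}) and the definition of $(w_k^j)^{(0)}$ and simplifying with (\ref{eq12}) and the identity $s-s*a=\det(a,s)\,a$.

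The main obstacle is not any single computation but the combinatorial bookkeeping in that last step: one must pin down the dictionary sending the side variables $z_e,z_f,z_g,z_h$ and the surrounding arc- and region-colors to the ordered vertices of the tetrahedra and to the shape-parameter assignment on the twisted octahedron, consistently for positive and negative crossings, for degenerate and non-degenerate ones, and for all four gluing patterns of Figure \ref{glue pattern}. The genuinely delicate case is a degenerate crossing adjacent to a non-degenerate one, where the removal of the cancelling tetrahedra means that the $(w_k^j)^{(0)}$, built purely from the region-colors at the degenerate crossing, must nonetheless be shown to satisfy the completeness and edge equations that involve the side variables of the neighbouring crossing --- this is the source of the technical length alluded to in the introduction. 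Once the dictionary is fixed, every verification reduces to the two determinant identities $\det(x,y)=0\iff h(x)=h(y)$ and $\det(a*c,b*c)=\det(a,b)$, together with Lemma \ref{lem1} and the choice (\ref{defp}) of $p$.
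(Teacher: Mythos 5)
Your proposal follows essentially the same route as the paper: the tuple is shown, via (\ref{eq11}) and the invariance $\det(a*c,b*c)=\det(a,b)$, to reproduce the cross-ratios of the developing-map tetrahedra (\ref{octa3})--(\ref{octa4-1}) (including the collapse $z_e^{(0)}=z_f^{(0)}=z_g^{(0)}=z_h^{(0)}$ at degenerate crossings), so the hyperbolicity equations hold automatically and Proposition \ref{pro1} gives membership in $\mathcal{S}$, while the non-degeneracy claims are handled by the identity $s-s*a=\det(a,s)\,a$ (the paper's $h(a*b-a)=h(b)$) together with Lemma \ref{lem1} and (\ref{defp}). The only real divergence is your genericity-of-$p$ argument for $z_k^{(0)}\neq 1$: the paper merely asserts this follows from Lemma \ref{lem1} and the choice of $p$, which is not literally forced by (\ref{defp}) alone, so your rescaling of $p$ is a legitimate small repair rather than a different method.
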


\begin{figure}[h]
\centering  \setlength{\unitlength}{0.6cm}\thicklines
\begin{picture}(6,5)  
    \put(6,4){\vector(-1,-1){4}}
    \put(1.5,2.2){$s$}
    \put(5,1.5){$s*a_l$}
    \put(6.2,4.2){$a_l$}
    \put(4,3){$z_k$}
  \end{picture}
  \caption{Region-coloring}\label{pic18}
\end{figure}

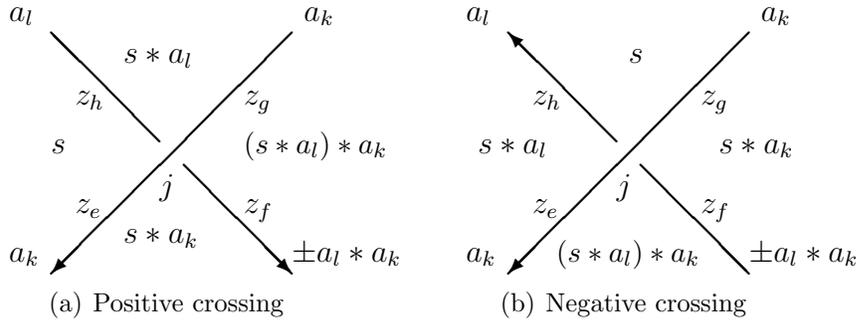
\begin{figure}[h]
\centering  \setlength{\unitlength}{0.8cm}
\subfigure[Positive crossing]{
\begin{picture}(6,5)\thicklines
    \put(5,4){\vector(-1,-1){4}}
    \put(2.8,2.2){\line(-1,1){1.8}}
    \put(3.2,1.8){\vector(1,-1){1.8}}
    \put(5.2,4.2){$a_k$}
    \put(0.3,4.2){$a_l$}
    \put(0.3,0.2){$a_k$}
    \put(5,0.2){$\pm a_l*a_k$}
    \put(2.8,1.3){$j$}
    \put(1.4,1){$z_e$}
    \put(4.2,1){$z_f$}
    \put(4.2,2.8){$z_g$}
    \put(1.4,2.8){$z_h$}  
    \put(2.2,3.5){$s*a_l$}
    \put(1,2){$s$}
    \put(2.2,0.5){$s*a_k$}
    \put(4.2,2){\small $(s*a_l)*a_k$}
\end{picture}}\hspace{1cm}
\subfigure[Negative crossing]{
\begin{picture}(6,5)\thicklines
    \put(5,4){\vector(-1,-1){4}}
    \put(2.8,2.2){\vector(-1,1){1.8}}
    \put(3.2,1.8){\line(1,-1){1.8}}
    \put(5.2,4.2){$a_k$}
    \put(0.3,4.2){$a_l$}
    \put(0.3,0.2){$a_k$}
    \put(5,0.2){$\pm a_l*a_k$}
    \put(2.8,1.3){$j$}
    \put(1.4,1){$z_e$}
    \put(4.2,1){$z_f$}
    \put(4.2,2.8){$z_g$}
    \put(1.4,2.8){$z_h$}
    \put(4.5,2){$s*a_k$}
    \put(3,3.5){$s$}
    \put(0.5,2){$s*a_l$}
    \put(1.8,0.2){\small $(s*a_l)*a_k$}
 \end{picture}}
  \caption{Crossings with shadow-colors and side-variables}\label{ze=zf}
  \end{figure}

Note that the $\pm$ signs in the arc-colors of Figure \ref{ze=zf} appears
due to the representatives of the colors in $\mathbb{C}^2\backslash\{0\}$.
However, $\pm$ does not change the value of $z_k^{(0)}$ because 
 \begin{equation*}
   \frac{\det(\pm a_l,p)}{\det(\pm a_l,s)}=\frac{\det(a_l,p)}{\det(a_l,s)}=z_k^{(0)}.
 \end{equation*}
Likewise, the value of $(w_k^j)^{(0)}$ does not depend on the choice of $\pm$ because the representatives of region-colors
are uniquely determined from the fact $s*(\pm a)=s*a$ for any $s,a\in\mathbb{C}^2\backslash\{0\}$.

\begin{proof}[Proof of Theorem \ref{thm1}]
At first, when the crossing $j$ in Figure \ref{ze=zf} is degenerate, we will show
\begin{equation}\label{zefgh}
z_e^{(0)}=z_f^{(0)}=z_g^{(0)}=z_h^{(0)},
\end{equation}
which satisfies (\ref{partialwj}). Using $h(a_k)=h(a_l)$, we put 
$a_k=\left(\begin{array}{cc}\alpha&\beta\end{array}\right)$ and
$a_l=\left(\begin{array}{cc}c\,\alpha &c\,\beta\end{array}\right)=c\,a_k$ for some constant $c\in\mathbb{C}\backslash\{0\}$.
Then we obtain $a_l*a_k=a_l$ and, if $j$ is positive crossing, then
\begin{eqnarray*}
&&z_e^{(0)}=\frac{c\,\det(a_k,p)}{c\,\det(a_k,s)}=\frac{\det(a_l,p)}{\det(a_l,s)}=z_h^{(0)},\\
&&z_f^{(0)}=\frac{\det(\pm a_l*a_k,p)}{\det(\pm a_l*a_k,s*a_k)}=\frac{\det(a_l*a_k,p)}{\det(a_l*a_k,s*a_k)}
=\frac{\det(a_l,p)}{\det(a_l,s)}=z_h^{(0)},\\
&&z_g^{(0)}=\frac{c\,\det(a_k,p)}{c\,\det(a_k,s*a_l)}=\frac{\det(a_l,p)}{\det(a_l,s*a_l)}=z_h^{(0)}.
\end{eqnarray*}
If $j$ is negative crossing, then by exchanging the indices $e\leftrightarrow g$ in the above calculation, we obtain the same result.

Note that Lemma \ref{lem1} and the definition of $p$ in Section \ref{triang} guarantee 
$z_k^{(0)}\neq 0,1,\infty$ and $(w_k^j)^{(0)}\neq 0,1$, 
so we will concentrate on proving $(z_1^{(0)},\ldots,z_n^{(0)},(w_k^j)^{(0)},\ldots)\in\mathcal{S}$.

Consider the positive crossing $j$ in Figure \ref{ze=zf}(a) and assume it is non-degenerate. 
Also consider the tetrahedra in Figures \ref{pic08}(a) and \ref{coordiocta}(a), 
and assign variables $z_e,z_f,z_g,z_h$ to sides of the link diagram as in Figure \ref{ze=zf}(a). 
Then, using (\ref{eq11}) and (\ref{sol1}), the shape parameters assigned to the horizontal edges 
${\rm A}_j{\rm B}_j$ and ${\rm D}_j{\rm A}_j$ are
\begin{eqnarray*}
1&\neq&  [h(s*a_k),h(p),h(\pm a_l*a_k),h(a_k)]\\
&=&\frac{\det(s,a_k)}{\det(s*a_k,\pm a_l*a_k)}\frac{\det(p,\pm a_l*a_k)}{\det(p,a_k)}
  =\frac{z_f^{(0)}}{z_e^{(0)}}, \\
1&\neq&  {[h(s), h(p), h ( a_k), h(a_l)]}
  =\frac{\det(s,a_l)}{\det(s,a_k)}\frac{\det(p,a_k)}{\det(p,a_l)}
  =\frac{z_e^{(0)}}{z_h^{(0)}},
\end{eqnarray*}
respectively. Likewise, the shape parameters assigned to ${\rm B}_j{\rm C}_j$ and ${\rm C}_j{\rm D}_j$ are 
$\frac{z_g^{(0)}}{z_f^{(0)}}$ and $\frac{z_h^{(0)}}{z_g^{(0)}}$ respectively.
Furthermore, for any $a,b\in\mathbb{C}^2\backslash\{0\}$, we can easily show that $h(a*b-a)=h(b)$. If
$\frac{z_g^{(0)}}{z_e^{(0)}}=\frac{\det(a_k,s)}{\det(a_k,s*a_l)}=1$, then $h(a_k)=h(s*a_l-s)=h(a_l)$, which is contradiction.
Therefore, we obtain $\frac{z_g^{(0)}}{z_e^{(0)}}\neq 1$, and $\frac{z_h^{(0)}}{z_f^{(0)}}\neq 1$ can be obtained similarly.

We can verify the same holds for non-degenerate negative crossing $j$
by the same way.

Now consider the case when the positive crossing $j$ in Figure \ref{ze=zf}(a) is degenerate. 
(See Figures \ref{twistocta}(c) and \ref{degocta}(a).)
Then, using (\ref{eq11}) and (\ref{sol2}), the shape parameters assigned to the edges 
${\rm F}_j{\rm A}_j$, ${\rm E}_j{\rm B}_j$, ${\rm F}_j{\rm C}_j$ and ${\rm E}_j{\rm D}_j$ in Figure \ref{twistocta}(c) are
\begin{eqnarray*}
[ h(a_k), h(s),h(p),h(s*a_l)][ h(a_k), h(s*a_k), h((s*a_l)*a_k), h(p)]\\
= \frac{\det(s,p)}{\det(s*a_k,p)}=(w_e^j)^{(0)},\\
  {[} h ( \pm a_l*a_k), h(p), h((s*a_l)*a_k), h(s*a_k) ]\\=\frac{\det(p,(s*a_l)*a_k)}{\det(p,s*a_k)}=(w_f^j)^{(0)},\\
{[} h(a_k), h((s*a_l)*a_k),h(p), h(s*a_k)][ h(a_k), h(s*a_l),h(s), h(p)]\\
= \frac{\det((s*a_l)*a_k,p)}{\det(s*a_l,p)}=(w_g^j)^{(0)},\\
  {[} {h}(a_l),h(p),h(s),h(s*a_l)]\\=\frac{\det(p,s)}{\det(p,s*a_l)}=(w_h^j)^{(0)},
\end{eqnarray*}
respectively. We can verify the same holds for degenerate negative crossing $j$
by the same way.

Therefore $(z_1^{(0)},\ldots,z_n^{(0)},(w_k^j)^{(0)},\ldots)$ 
satisfies the hyperbolicity equations of octahedral triangulation defined in Section \ref{triang}
and, from Proposition \ref{pro1},
we obtain $(z_1^{(0)},\ldots,z_n^{(0)},(w_k^j)^{(0)},\ldots)$ is a solution of $\mathcal{H}$. 
By the definition of $\mathcal{S}$, we obtain $(z_1^{(0)},\ldots,z_n^{(0)},(w_k^j)^{(0)},\ldots)\in\mathcal{S}$.

\end{proof}


To obtain the complex volume of $\rho$ from the potential function $V(z_1,\ldots,z_n,(w_k^j),\ldots)$, we modify it to
\begin{eqnarray}
\lefteqn{V_0(z_1,\ldots,z_n,(w_k^j),\ldots):=V(z_1,\ldots,z_n,(w_k^j),\ldots)}\label{V0} \\
&&-\sum_k\left(z_k \frac{\partial V}{\partial z_k}\right)\log z_k
-\sum_{j:\text{degenerate}\atop k}\left(w_k^j \frac{\partial V}{\partial w_k^j}\right)\log w_k^j.\nonumber
\end{eqnarray}
This modification guarantees the invariance of the value under the choice of any log-branch.
(See Lemma 2.1 of \cite{Cho13a}.) Note that $V_0(z_1^{(0)},\ldots,z_n^{(0)},(w_k^j)^{(0)},\ldots)$ means
the evaluation of the function $V_0(z_1,\ldots,z_n,(w_k^j),\ldots)$ at $(z_1^{(0)},\ldots,z_n^{(0)},(w_k^j)^{(0)},\ldots)$.

\begin{thm}\label{thm2}
Consider a hyperbolic link $L$, the shadow-coloring induced by $\rho$, 
the potential function $V(z_1,\ldots,z_n,(w_k^j),\ldots)$ 
and the solution $(z_1^{(0)},\ldots,z_n^{(0)},(w_k^j)^{(0)},\ldots)\in\mathcal{S}$ defined in Theorem \ref{thm1}. Then,
\begin{equation}\label{eq19}
  V_0(z_1^{(0)},\ldots,z_n^{(0)},(w_k^j)^{(0)},\ldots)\equiv i(\vol(\rho)+i\,\cs(\rho))~~({\rm mod}~\pi^2).
\end{equation}
\end{thm}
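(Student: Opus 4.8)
The plan is to reduce the evaluation of $V_0$ at the explicit solution to the formula of Theorem \ref{thmvol}. The key observation is that the shape parameters assigned to the horizontal edges (non-degenerate case) and to the edges ${\rm A}_j{\rm F}_j$, ${\rm B}_j{\rm E}_j$, etc. (degenerate case) of the octahedron are, by the computations already carried out in the proof of Theorem \ref{thm1}, precisely the cross-ratios $z_t^{\sigma_t}$ of the four tetrahedra subdividing that octahedron. So the first step is to rewrite each local potential $V_j$ evaluated at the solution as a sum over the four tetrahedra of the octahedron, pairing each $\li$-term in (\ref{potential1}) (or each $\log w \cdot \log z$ term in (\ref{potential2})) with the dilogarithm part $\li(z_t^{\sigma_t}) + \tfrac12 \log z_t^{\sigma_t}\log(1-z_t^{\sigma_t})$ of the corresponding tetrahedron. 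The subtracted terms $-(z_k\partial V/\partial z_k)\log z_k$ and $-(w_k^j\partial V/\partial w_k^j)\log w_k^j$ in the definition (\ref{V0}) of $V_0$ are designed to supply exactly the $\tfrac{\pi i}{2}(q\log z + p\log(1-z))$ correction terms, because at the solution the logarithmic derivatives $z_k\partial V/\partial z_k$ and $w_k^j\partial V/\partial w_k^j$ are integer multiples of $2\pi i$ (by definition of $\mathcal{H}$), and matching these integers against the definitions (\ref{pq}) of $p_t$ and $q_t$ is the bookkeeping heart of the argument.

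Concretely, I would proceed as follows. First, establish the purely local identity: for a fixed crossing $j$, show that $V_j$ evaluated at $z^{(0)}$, together with the portion of the correction terms in (\ref{V0}) attached to the side variables and $w$-variables of that crossing, equals $\sum_{t} \sigma_t \widehat L[z_t^{\sigma_t}; p_t, q_t]$ up to an element of $\tfrac{\pi^2}{6}\mathbb{Z}$ coming from the $-\pi^2/6$ in (\ref{dilo}) — and one checks that the number of such terms is a multiple of $6$ or that the ambiguity is absorbed mod $\pi^2$. This step uses only the five-term / four-term dilogarithm identities relating $\li(z_f/z_e)$-type expressions to $\li$ of cross-ratios, exactly as in \cite{Cho13a}; in the degenerate case it is even simpler since $V_j$ is already a sum of $\log\cdot\log$ terms and the four tetrahedra cancel in pairs geometrically (as noted after (\ref{octa4-1})), contributing zero to the complex volume. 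Second, sum over all crossings $j$: the correction terms in (\ref{V0}) were organized by side variable $z_k$ and by $w_k^j$, and since each side is shared by (at most) two crossings, summing the local identities reassembles $V_0(z^{(0)})$ on the left and $\sum_t \sigma_t\widehat L[z_t^{\sigma_t}; p_t, q_t]$ over all tetrahedra on the right. Third, invoke Theorem \ref{thmvol} to identify the right-hand side with $i(\vol(\rho) + i\,\cs(\rho))$ mod $\pi^2$, which finishes the proof.

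The main obstacle will be the branch-tracking in the second step: one must verify that the integers $p_t$, $q_t$ produced by expanding $z_k \partial V/\partial z_k$ and $w_k^j \partial V/\partial w_k^j$ at the solution genuinely coincide with the integers defined in (\ref{pq}) via the long-edge parameters $g_{kl}$. This is delicate because the side variables $z_k^{(0)} = \det(a_l,p)/\det(a_l,s)$ are ratios of determinants, so $\log z_k^{(0)}$ decomposes as $\log\det(a_l,p) - \log\det(a_l,s)$, and one needs these determinant-logarithms to be consistent with a choice of long-edge parameters across the whole triangulation — precisely the sign ambiguity $g_{kl} = \pm\widehat g_{kl}$ discussed around Figure \ref{inconsistency}. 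The saving grace, which I would lean on, is Lemma 6 of \cite{Kabaya14}: any choice of the $g_{kl}$ yields the same complex volume, so it suffices to exhibit one consistent choice compatible with the decomposition $\log z_k^{(0)} = \log\det(a_l,p) - \log\det(a_l,s)$, and then the integer computation becomes forced. A secondary, more mechanical difficulty is handling the degenerate crossings and kinks uniformly with the non-degenerate ones, but since their tetrahedra cancel geometrically and their $w$-variables at the solution are again determinant ratios, the same mechanism applies and contributes nothing new to the volume. I would also double-check, as in Lemma 2.1 of \cite{Cho13a}, that the whole expression $V_0(z^{(0)})$ is independent of log-branch choices, so that the congruence mod $\pi^2$ is well-posed.
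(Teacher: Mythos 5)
Your overall strategy is the paper's: evaluate $V_0$ at the constructed solution, match it against $\sum_t\sigma_t\widehat{L}[z_t^{\sigma_t};p_t,q_t]$, and invoke Theorem \ref{thmvol}; the reduction of the degenerate crossings (their octahedra cancel geometrically and $(V_j)_0$ vanishes there) is also exactly what the paper does. The genuine gap is in your first step, the ``purely local identity'' at a single crossing. That identity is false as stated. Writing $\widehat{L}[u;p,q]=\li(u)+\tfrac12 q\pi i\log u+\tfrac12\log(1-u)\bigl(\log u+p\pi i\bigr)-\tfrac{\pi^2}{6}$ and substituting the definitions (\ref{pq}) of $p$ and $q$, the non-dilogarithm part of each tetrahedron's contribution is a sum of products of $\log u$ or $\log(1-u)$ with logarithms of long-edge parameters. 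Within one octahedron these do not cancel: the parameters $\gamma$ attached to the horizontal edges (one per adjacent region) survive the signed sum over the four tetrahedra of the crossing, and they only disappear when you combine the two crossings at the ends of each side of the diagram --- this per-side cancellation is precisely how the paper converts $r_k\pi i=z_k\partial\widehat V/\partial z_k$ into $-\sum_m\sigma_k^m\tau_k^m q_k^m\pi i$ (equation (\ref{rkq})). Worse, the $p$-part requires Observation \ref{obs}, $\log\alpha_k-\log\beta_k\equiv\log z_k+A~(\mathrm{mod}~\pi i)$ with a single global constant $A$, and $A$ is killed only by the global identity $\sum_k r_k=0$ (Lemma \ref{lem_b}). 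Neither mechanism is visible at the level of one crossing, so no splitting of the correction terms $-(z_k\partial V/\partial z_k)\log z_k$ among crossings can make your local identity hold modulo $\pi^2$; the leftovers are generic complex numbers, not elements of $\tfrac{\pi^2}{6}\mathbb{Z}$.

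Your proposed remedy --- fixing one consistent choice of long-edge parameters compatible with $\log z_k^{(0)}=\log\det(a_l,p)-\log\det(a_l,s)$ --- addresses the sign ambiguity $g_{kl}=\pm\widehat g_{kl}$ but not this locality problem, since the region parameters $\gamma$ appear with the ``wrong'' multiplicities inside a single octahedron no matter how their values are chosen. The correct granularity is: dilogarithm terms match crossing by crossing; the $q$-terms match side by side; the $p$-terms match only after summing over the whole diagram. Two further small points: you also need the analogue of Lemma \ref{integer} to know that the exponents $r_k$ of the simplified potential $\widehat V$ are even integers (the condition $\exp(z_k\partial V/\partial z_k)=1$ is stated for $V$, not for $\widehat V$, and the passage between them uses the degenerate-crossing relation $w_e^jw_g^j=w_f^jw_h^j$); and the $-\tfrac{\pi^2}{6}$ constants cancel exactly because each octahedron contributes two positively and two negatively oriented tetrahedra, so no divisibility-by-$6$ argument is needed.
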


\begin{proof}
When the crossing $j$ is degenerate, direct calculation shows that 
the potential function $V_j$ of the crossing defined at (\ref{potential2}) satisfies
\begin{equation}\label{V0=0}
  (V_j)_0(z,z,z,z,w_1,w_2,w_3)=0,
\end{equation}
for any nonzero values of $z,w_1,w_2,w_3$.
To simplify the potential function, we rearrange the side variables $z_1, \ldots,z_n$ to 
$z_1,\ldots,z_r$, $z_{r+1}, z_{r+1}^1,z_{r+1}^2,z_{r+1}^3,\ldots$, $z_t,\ldots,z_t^3$ so that
all endpoints of sides with variables $z_1,\ldots,z_r$ are non-degenerate crossings and
the degenerate crossings induce $z_{r+1}^{(0)}=(z_{r+1}^1)^{(0)}=(z_{r+1}^2)^{(0)}=(z_{r+1}^3)^{(0)}$, 
$\ldots$, $z_t^{(0)}=\ldots=(z_t^3)^{(0)}$. (Refer (\ref{zefgh}).)
Then we define simplified potential function $\widehat{V}$ by
$$\widehat{V}(z_1,\ldots,z_t):=
  \sum_{j:\text{non-degenerate}}V_j(z_1,\ldots, z_r,z_{r+1},z_{r+1},z_{r+1},z_{r+1},\ldots,z_t,z_t,z_t,z_t).$$
Note that $\widehat{V}$ is obtained from $V$ by removing the potential functions (\ref{potential2}) of the degenerate crossings
and substituting the side variables $z_e,z_f, z_g, z_h$ around the degenerate crossing with $z_e$. From (\ref{V0=0}), we have
\begin{equation*}
  \widehat{V}_0(z_1^{(0)},\ldots,z_t^{(0)})=V_0(z_1^{(0)},\ldots,z_n^{(0)},(w_k^j)^{(0)},\ldots),
\end{equation*}
which suggests $\widehat{V}$ is just a simplification of $V$ with the same value.
Therefore, from now on, we will use only $\widehat{V}$ and 
substitute the side variables of the link diagram $z_{r+1}^1, z_{r+1}^2, z_{r+1}^3$ to $z_{r+1}$ and $z_t^1,\ldots,z_t^3$ to $z_t$, etc, except at Lemma \ref{integer} below.
Also, we remove octahedra (\ref{octa4}) or (\ref{octa4-1}) placed at all degenerate crossings
(in other words, the octahedra in Figure \ref{coordiocta})
because they do not have any effect on the complex volume. (See the comment below the proof of Theorem \ref{thmvol}.)

Now we will follow ideas of the proof of Theorem 1.2 in \cite{Cho13a}. 
However, due to the degenerate crossings, we will improve the proof to cover more general cases.
At first, we define $r_k$ by
\begin{equation}\label{r}
r_k\pi i=\left.z_k\frac{\partial \widehat{V}}{\partial z_k}\right\vert_{z_1=z_1^{(0)},\ldots,z_t=z_t^{(0)}},
\end{equation}
for $k=1,\ldots,t$, where $\vert_{z_1=z_1^{(0)},\ldots,z_t=z_t^{(0)}}$ 
means the evaluation of the equation at $(z_1^{(0)},\ldots,z_t^{(0)})$.
Unlike \cite{Cho13a}, we cannot guarantee $r_k$ is an even integer yet,
so we need the following lemma.

\begin{lem}\label{integer} For the value $z_k^{(0)}$ defined in Theorem \ref{thm1}, $(z_1^{(0)},\ldots,z_t^{(0)})$
is a solution of the following set of equations
$$\widehat{\mathcal{H}}=\left\{\left.\exp(z_k\frac{\partial \widehat{V}}{\partial z_k})=1~\right\vert~ k=1,\ldots,t\right\}.$$
\end{lem}

\begin{proof}
For a degenerate crossing $j$, from (\ref{potential2}), 
\begin{equation*}
V_j(z_k,z_k,z_k,z_k,w_e^j,w_f^j,w_g^j)
=(-\log w_e^j+\log w_f^j-\log w_g^j+\log w_h^j)\log z_k.
\end{equation*}
Therefore, using $\frac{w_f^j w_h^j}{w_e^j w_g^j}=1$, we obtain
\begin{equation*}
\exp\left(z_k\frac{\partial V_j}{\partial z_k}(z_k,z_k,z_k,z_k,w_e^j,w_f^j,w_g^j)\right)=1.
\end{equation*}
This equation implies that, if we substitute the variables $z_{r+1}^1, z_{r+1}^2, z_{r+1}^3$ to $z_{r+1}$ and $z_t^1,\ldots,z_t^3$ to $z_t$, etc, of the equations in $\mathcal{H}$,
then it becomes $\widehat{\mathcal{H}}$. 
Therefore, Theorem \ref{thm1} induces this lemma.

\end{proof}

As a corollary of Lemma \ref{integer}, now we know $r_k$ defined in (\ref{r}) is an even integer.

To avoid redundant complicate indices, we use $z_k$ instead of $z_k^{(0)}$ in this proof from now on.
Using the even integer $r_k$, we can denote $V_0(z_1,\ldots,z_t)$ by
\begin{equation}\label{v0r}
\widehat{V}_0(z_1,\ldots,z_t)=\widehat{V}(z_1,\ldots,z_t)-\sum_{k=1}^t r_k\pi i\log z_k.
\end{equation}

Now we introduce notations $\alpha_m, \beta_m, \gamma_l, \delta_j$ for the long-edge parameters defined in (\ref{gjk}).
We assign $\alpha_m$ and $\beta_m$ to non-horizontal edges as in Figure \ref{labeling}, where $m$ is over all sides
of the link diagram. 
(Recall that the edges ${\rm A}_j{\rm B}_j$, ${\rm B}_j{\rm C}_j$, ${\rm C}_j{\rm D}_j$ and ${\rm D}_j{\rm A}_j$
in Figure \ref{labeling} were named horizontal edges.)
We also assign $\gamma_l$ to horizontal edges, where $l$ is over all regions, and $\delta_j$ to the edge ${\rm E}_j{\rm F}_j$ inside the octahedron.
Although we have $\alpha_a=\alpha_c$ and $\beta_b=\beta_d$ because of the gluing, we use $\alpha_a$ for the tetrahedron
${\rm E}_j{\rm F}_j{\rm A}_j{\rm B}_j$ and ${\rm E}_j{\rm F}_j{\rm A}_j{\rm D}_j$, $\alpha_c$ for
${\rm E}_j{\rm F}_j{\rm C}_j{\rm B}_j$ and ${\rm E}_j{\rm F}_j{\rm C}_j{\rm D}_j$, $\beta_b$ for
${\rm E}_j{\rm F}_j{\rm A}_j{\rm B}_j$ and ${\rm E}_j{\rm F}_j{\rm C}_j{\rm B}_j$, $\beta_d$ for
${\rm E}_j{\rm F}_j{\rm C}_j{\rm D}_j$ and ${\rm E}_j{\rm F}_j{\rm A}_j{\rm D}_j$, respectively.
Note that the labeling is consistent even when some crossing is degenerate because, 
when the crossing $j$ in Figure \ref{labeling} is degenerate,
we obtain $z_a=z_b=z_c=z_d$ and, after removing the octahedron of the crossing,
the long-edge parameters satisfy $\alpha_a=\alpha_b=\alpha_c=\alpha_d$ and $\beta_a=\beta_b=\beta_c=\beta_d$.

\begin{figure}[h]
\centering
\begin{picture}(6,8)  
  \setlength{\unitlength}{0.8cm}\thicklines
        \put(4,5){\arc[5](1,1){360}}
    \put(6,7){\line(-1,-1){4}}
    \put(2,7){\line(1,-1){1.8}}
    \put(4.2,4.8){\line(1,-1){1.8}}
    \put(1.5,7.2){$z_d$}
    \put(6,7.2){$z_c$}
    \put(1.6,2.6){$z_a$}
    \put(6,2.6){$z_b$}
    \put(2.2,4){${\rm A}_j$}
    \put(5.3,4){${\rm B}_j$}
    \put(5.3,5.9){${\rm C}_j$}
    \put(2.2,5.9){${\rm D}_j$}
    \put(4.4,4.9){$j$}
  \end{picture}\hspace{2cm}
\includegraphics[scale=0.8]{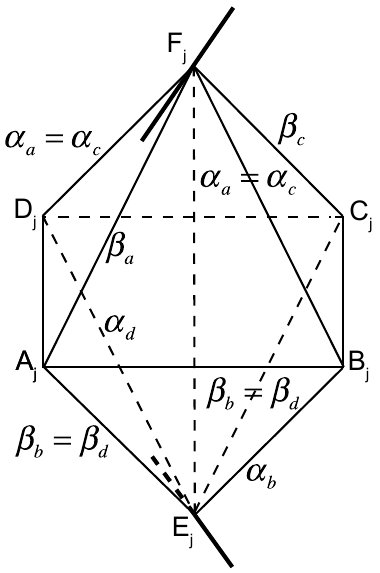}
 \caption{Long-edge parameters of non-horizontal edges}\label{labeling}
\end{figure}

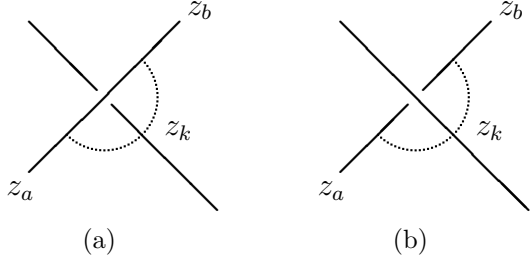
\begin{figure}[h]
\centering  \setlength{\unitlength}{1cm}\thicklines
\subfigure[]{\begin{picture}(3,3)  
  \put(1.5,1.5){\arc[5](0.5,0.5){-180}}
  \put(0.5,0.5){\line(1,1){2}}
  \put(0.5,2.5){\line(1,-1){0.9}}
  \put(1.6,1.4){\line(1,-1){1.4}}
  \put(0.2,0.2){$z_a$}
  \put(2.6,2.6){$z_b$}
  \put(2.3,1){$z_k$}
  \end{picture}}\hspace{1cm}
\subfigure[]{\begin{picture}(3,3)  
  \put(1.5,1.5){\arc[5](0.5,0.5){-180}}
  \put(0.5,0.5){\line(1,1){0.9}}
  \put(2.5,2.5){\line(-1,-1){0.9}}
  \put(0.5,2.5){\line(1,-1){2.5}}
  \put(0.2,0.2){$z_a$}
  \put(2.6,2.6){$z_b$}
  \put(2.3,1){$z_k$}
  \end{picture}}
 \caption{Two cases with respect to $z_k$}\label{twocases1}
\end{figure}

Now consider a side with variable $z_k$ and two possible cases in Figure \ref{twocases1}. We consider the case when the crossing is non-degenerate,
or equivalently, $z_a\neq z_k\neq z_b$. (If it is degenerate, we assume there is a degenerated octahedron\footnote{
Octahedron is called degenerate when two vertices at the top and the bottom coincide.} at the crossing.)
For $m=a,b$, let $\sigma_k^m\in\{\pm1\}$ be the sign of the tetrahedron\footnote{
Sign of a tetrahedron is the sign of the coordinate in (\ref{octa3}) or (\ref{octa3-1}). 
} between the sides $z_k$ and $z_m$,
and $u_k^m$ be the shape parameter of the tetrahedron assigned to the horizontal edge.
We put $\tau_k^m=1$ when $z_k$ is the numerator of $(u_k^m)^{\sigma_k^m}$ and $\tau_k^m=-1$ otherwise.
We also define $p_k^m$ and $q_k^m$ by (\ref{pq}) so that 
$\sigma_k^m[(u_k^m)^{\sigma_k^m};p_k^m,q_k^m]$ becomes the element of 
$\widehat{\mathcal{P}}(\mathbb{C})$ corresponding to the tetrahedron.
Then $\frac{1}{2}\sum_{1\leq k,m\leq t}\sigma_k^m[(u_k^m)^{\sigma_k^m};p_k^m,q_k^m]$ is the element\footnote{
The coefficient $\frac{1}{2}$ appears because the same tetrahedron is counted twice in the summation.}
 of $\widehat{\mathcal{B}}(\mathbb{C})$
corresponding to the octahedral triangulation in Section \ref{triang}, and
\begin{equation}\label{volcs}
\frac{1}{2}\sum_{1\leq k,m\leq t}\sigma_k^m \widehat{L}[(u_k^m)^{\sigma_k^m};p_k^m,q_k^m]\equiv i(\vol(\rho)+i\,\cs(\rho))\modulo,
\end{equation}
from Theorem \ref{thmvol}.

By definition, we know
\begin{equation}
u_k^a=\frac{z_k}{z_a},~u_k^b=\frac{z_b}{z_k}.\label{def_u}
\end{equation}

\begin{figure}[h]
\centering
\subfigure[]{\includegraphics[scale=0.8]{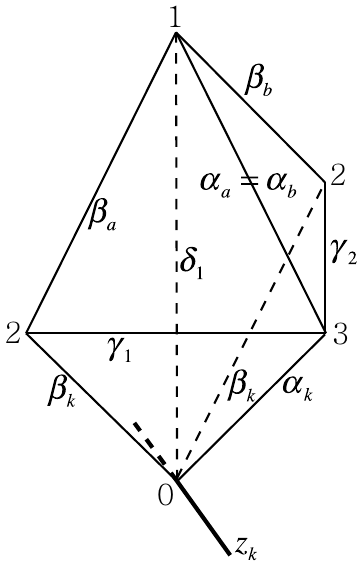}}\hspace{1cm}
\subfigure[]{\includegraphics[scale=0.8]{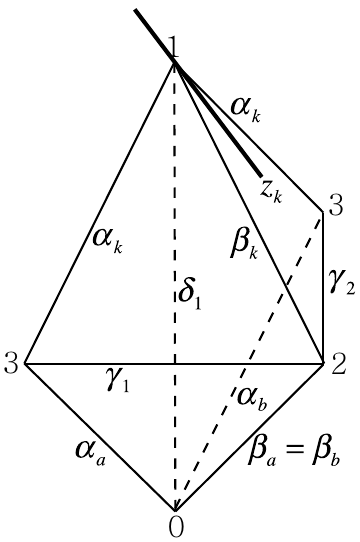}}
 \caption{Tetrahedra of Figure \ref{twocases1}}\label{twocases2}
\end{figure}

In the case of Figure \ref{twocases1}(a), we have
$$\sigma_k^a=1,~\sigma_k^b=-1~\text{ and }~\tau_k^a=\tau_k^b=1.$$
Using the equation (\ref{pq}) and Figure \ref{twocases2}(a), we decide $p_k^m$ and $q_k^m$ as follows:

\begin{eqnarray}
      \left\{\begin{array}{ll}
      \log\frac{z_k}{z_a}+p_k^a\pi i=(\log\alpha_k-\log\beta_k)-(\log\alpha_a-\log\beta_a),\\
      \log\frac{z_k}{z_b}+p_k^b\pi i=(\log\alpha_k-\log\beta_k)-(\log\alpha_b-\log\beta_b),
      \end{array}\right.\label{case a p}
\end{eqnarray}

\begin{eqnarray}
      \left\{\begin{array}{ll}
      -\log(1-\frac{z_k}{z_a})+q_k^a\pi i=\log\beta_k+\log\alpha_a-\log\gamma_1-\log\delta_1,\\
      -\log(1-\frac{z_k}{z_b})+q_k^b\pi i=\log\beta_k+\log\alpha_b-\log\gamma_2-\log\delta_1.\\
      \end{array}\right.\label{case a q}
\end{eqnarray}

In the case of Figure \ref{twocases1}(b), we have
$$\sigma_k^a=-1,~\sigma_k^b=1~\text{ and }~\tau_k^a=\tau_k^b=-1.$$
Using the equation (\ref{pq}) and Figure \ref{twocases2}(b), we decide $p_k^m$ and $q_k^m$ as follows:

\begin{eqnarray}
      \left\{\begin{array}{ll}
      \log\frac{z_a}{z_k}+p_k^a\pi i=(\log\alpha_a-\log\beta_a)-(\log\alpha_k-\log\beta_k),\\
      \log\frac{z_b}{z_k}+p_k^b\pi i=(\log\alpha_b-\log\beta_b)-(\log\alpha_k-\log\beta_k),
      \end{array}\right.\label{case b p}
\end{eqnarray}

\begin{eqnarray}
      \left\{\begin{array}{ll}
      -\log(1-\frac{z_a}{z_k})+q_k^a\pi i=\log\beta_a+\log\alpha_k-\log\gamma_1-\log\delta_1,\\
      -\log(1-\frac{z_b}{z_k})+q_k^b\pi i=\log\beta_b+\log\alpha_k-\log\gamma_2-\log\delta_1.\\
      \end{array}\right.\label{case b q}
\end{eqnarray}

The equations (\ref{case a p}) and (\ref{case b p}) holds for all (non-degenerate and degenerate) crossings, so we get the following observation.

\begin{obs}\label{obs} We have
$$\log\alpha_k-\log\beta_k\equiv\log z_k +A~~({\rm mod}~\pi i),$$
for all $k=1,\ldots,t$, where $A$ is a complex constant number independent of $k$.
\end{obs}

Note that, by the definition (\ref{potential1}), the potential function $\widehat{V}$ is expressed by
\begin{equation}\label{simpleV}
\widehat{V}(z_1,\ldots,z_t)=\frac{1}{2}\sum_{1\leq k,m\leq t}\sigma_k^m\li((u_k^m)^{\sigma_k^m})
=\frac{1}{2}\sum_{k=1}^t\sum_{m=a,\ldots,d}\sigma_k^m\li((u_k^m)^{\sigma_k^m}),
\end{equation}
where the range of the index $m$ is determined by $k$ and we put the range of $m$ by $m=a,\ldots,d$\footnote{
The range $m=a,\ldots,d$ means that each side with one of the side variables $z_a,\ldots,z_d$ share
a non-degenerate crossing with a side with $z_k$.
} from now on.
Recall that $r_k$ was defined in (\ref{r}). Direct calculation shows
\begin{equation*}
r_k\pi i=-\sum_{m=a,\ldots,d}\sigma_k^m\tau_k^m\log (1-(u_k^m)^{\sigma_k^m}).
\end{equation*}

Combining (\ref{case a q}) and (\ref{case b q}), we obtain
\begin{equation*}
\sum_{m=a,b}\sigma_k^m\tau_k^m \left\{-\log (1-(u_k^m)^{\sigma_k^m})+q_k^m\pi i\right\}
=-\log\gamma_1+\log\gamma_2,
\end{equation*}
for both cases in Figure \ref{twocases1}. 
(Note that $\alpha_a=\alpha_b$ in (\ref{case a q}) and $\beta_a=\beta_b$ in (\ref{case b q}).)
Therefore, we obtain
$$\sum_{m=a,\ldots,d}\sigma_k^m\tau_k^m \left\{-\log (1-(u_k^m)^{\sigma_k^m})+q_k^m\pi i\right\}=0,$$
and
\begin{equation}\label{rkq}
r_k\pi i =-\sum_{m=a,\ldots,d}\sigma_k^m\tau_k^m q_k^m\pi i.
\end{equation}


\begin{lem}\label{lem_a}
For all possible $k$ and $m$, we have
\begin{equation}
\frac{1}{2}\sum_{1\leq k,m\leq t}\sigma_k^m q_k^m\pi i\log(u_k^m)^{\sigma_k^m}\equiv-\sum_{k=1}^t r_k\pi i\log z_k~~({\rm mod}~2\pi^2).
\end{equation}
\end{lem}

\begin{proof} 
Note that, by definition, $\sigma_k^m=\sigma_m^k$, $\tau_k^m=-\tau_m^k$ and
$$(u_k^m)^{\sigma_k^m}=\left(\frac{z_k}{z_m}\right)^{\tau_k^m}=(z_k)^{\tau_k^m}(z_m)^{\tau_m^k}.$$
Using the above and (\ref{rkq}), we can directly calculate
\begin{eqnarray*}
\frac{1}{2}\sum_{k=1}^t\sum_{m=a,\ldots,d}\sigma_k^m q_k^m\pi i\log (u_k^m)^{\sigma_k^m}
&\equiv&\sum_{k=1}^t\left(\sum_{m=a,\ldots,d}\sigma_k^m\tau_k^m q_k^m\pi i\right)\log z_k~~({\rm mod}~2\pi^2)\\
&=&-\sum_{k=1}^t r_k\pi i\log z_k.
\end{eqnarray*}
\end{proof}

\begin{lem}\label{lem_b}
For all possible $k$ and $m$, we have
$$\frac{1}{2}\sum_{1\leq k,m\leq t}\sigma_k^m\log\left(1-(u_k^m)^{\sigma_k^m}\right)\left(\log(u_k^m)^{\sigma_k^m}+p_k^m\pi i\right)
\equiv-\sum_{k=1}^t r_k\pi i\log z_l~~({\rm mod}~2\pi^2).$$
\end{lem}

\begin{proof} 
From (\ref{case a p}) and (\ref{case b p}), we have
$$\log(u_k^m)^{\sigma_k^m}+p_k^m\pi i=\tau_k^m(\log \alpha_k-\log \beta_k)+\tau_m^k(\log \alpha_m-\log \beta_m).$$
Therefore,
\begin{eqnarray*}
\lefteqn{\frac{1}{2}\sum_{1\leq k,m\leq t}\sigma_k^m\log\left(1-(u_k^m)^{\sigma_k^m}\right)\left(\log(u_k^m)^{\sigma_k^m}+p_k^m\pi i\right)}\\
&&=\sum_{k=1}^t\left(\sum_{m=a,\ldots,d}\sigma_k^m\tau_k^m\log(1-(u_k^m)^{\sigma_k^m})\right)(\log\alpha_k-\log\beta_k)\\
&&=-\sum_{k=1}^t r_k\pi i(\log\alpha_k-\log\beta_k).
\end{eqnarray*}

Note that 
$$\sum_{k=1}^t r_{k}\pi i=\sum_{k=1}^t z_k\frac{\partial \widehat{V}}{\partial z_k}=0$$
because $ \widehat{V}$ is expressed by the summation of certain forms of $\li(\frac{z_a}{z_b})$ and
$$z_a\frac{\partial\li(z_a/z_b)}{\partial z_a}+z_b\frac{\partial\li(z_a/z_b)}{\partial z_b}=
-\log(1-\frac{z_a}{z_b})+\log(1-\frac{z_a}{z_b})=0.$$

By using Observation \ref{obs}, the above and the fact that $r_k$ is even, we have
\begin{equation*}
-\sum_{k=1}^t r_k\pi i(\log\alpha_k-\log\beta_k)\equiv-\sum_{k=1}^t r_k\pi i(\log z_k+A)
=-\sum_{k=1}^t r_k\pi i\log z_k~~({\rm mod}~2\pi^2).
\end{equation*}

\end{proof}

Combining (\ref{volcs}), (\ref{simpleV}), Lemma \ref{lem_a} and Lemma \ref{lem_b}, we complete the proof of Theorem \ref{thm2} as follows:
\begin{eqnarray*}
\lefteqn{i(\vol(\rho)+i\,\cs(\rho))
\equiv\frac{1}{2}\sum_{1\leq k,m\leq t}\sigma_k^m \widehat{L}[(u_k^m)^{\sigma_k^m};p_k^m,q_k^m]}\\
&&=\frac{1}{2}\sum_{1\leq k,m\leq t}\sigma_k^m\left(\li\left((u_k^m)^{\sigma_k^m}\right)-\frac{\pi^2}{6}\right)
+\frac{1}{4}\sum_{1\leq k,m\leq t}\sigma_k^m q_k^m\pi i\log\left(u_k^m\right)^{\sigma_k^m}\\
&&~~+\frac{1}{4}\sum_{1\leq k,m\leq t}\sigma_k^m\log\left(1-\left(u_k^m\right)^{\sigma_k^m}\right)
\left(\log\left(u_k^m\right)^{\sigma_k^m}+p_k^m\pi i\right)\\
&&\equiv \widehat{V}(z_1,\ldots,z_n)-\sum_{k=1}^t r_k\pi i \log z_k=\widehat{V}_0(z_1,\ldots,z_t)\modulo.
\end{eqnarray*}

\end{proof}

\section{Examples}\label{sec4}
\subsection{Figure-eight knot $4_1$}\label{sec41}

\begin{figure}[h]\centering
\includegraphics[scale=0.45]{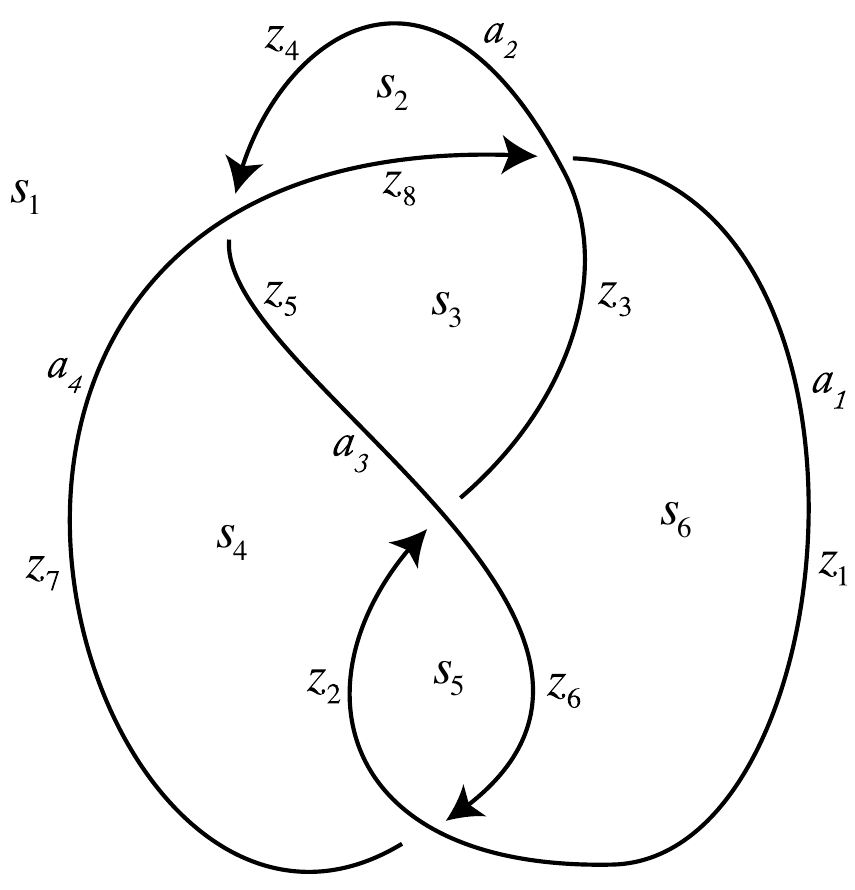}
\caption{Figure-eight knot $4_1$ with parameters}\label{example1}
\end{figure}

For the figure-eight knot diagram in Figure \ref{example1}, 
let the elements of $\mathcal{P}$ corresponding to the arcs be 
$$a_1=\left(\begin{array}{cc}0 &t\end{array}\right),~a_2=\left(\begin{array}{cc}1 &0\end{array}\right),~
a_3=\left(\begin{array}{cc}-t &1+t\end{array}\right),~a_4=\left(\begin{array}{cc}-t &t\end{array}\right),$$
where $t$ is a solution of $t^2+t+1=0$. These elements satisfy
\begin{equation}\label{minus1}
a_1*a_2=a_4,~a_3*a_4=a_2,~a_1*a_3=-a_2,~a_3*a_1=a_4,
\end{equation}
where the identities are expressed in $\mathbb{C}^2\backslash\{0\}$, not in $\mathcal{P}=(\mathbb{C}^2\backslash\{0\})/\pm$.
Let $\rho:\pi_1(4_1)\rightarrow{\rm PSL}(2,\mathbb{C})$
be the boundary-parabolic representation determined by $a_1,\ldots,a_4$.
We define the shadow-coloring of Figure \ref{example1} induced by $\rho$
by letting
\begin{eqnarray*}
s_1=\left(\begin{array}{cc}1 &1\end{array}\right),~s_2=\left(\begin{array}{cc}0 &1\end{array}\right),~
s_3=\left(\begin{array}{cc}-t-1 &t+2\end{array}\right),~s_4=\left(\begin{array}{cc}-2t-1 &2t+3\end{array}\right),\\
s_5=\left(\begin{array}{cc}-2t-1 &t+4\end{array}\right),~s_6=\left(\begin{array}{cc}1 &t+2\end{array}\right),~
p=\left(\begin{array}{cc}2 &1\end{array}\right).
\end{eqnarray*}
Direct calculation shows this shadow-coloring satisfies (\ref{regcon}) in Lemma \ref{lem1}. 
(However, this does not satisfy (\ref{exi}).) 

All values of $h(a_1),\ldots,h(a_4)$ are different, hence the potential function $V(z_1,\ldots,z_8)$ of Figure \ref{example1}
is (\ref{V41}). Applying Theorem \ref{thm1}, we obtain
\begin{eqnarray*}
z_1^{(0)}=\frac{\det(a_1,p)}{\det(a_1,s_6)}=2,~z_2^{(0)}=\frac{\det(a_1,p)}{\det(a_1,s_5)}=\frac{-2}{2t+1},
~z_3^{(0)}=\frac{\det(a_2,p)}{\det(a_2,s_6)}=\frac{1}{t+2},\\
z_4^{(0)}=\frac{\det(a_2,p)}{\det(a_2,s_1)}=1,~z_5^{(0)}=\frac{\det(a_3,p)}{\det(a_3,s_4)}=-3t-2,
~z_6^{(0)}=\frac{\det(a_3,p)}{\det(a_3,s_5)}=\frac{3t+2}{2t},\\
z_7^{(0)}=\frac{\det(a_4,p)}{\det(a_4,s_4)}=\frac{3}{2},~z_8^{(0)}=\frac{\det(a_4,p)}{\det(a_4,s_3)}=3,
\end{eqnarray*}
and $(z_1^{(0)},\ldots,z_8^{(0)})$ becomes a solution of $\mathcal{H}=\{\exp(z_k\frac{\partial V}{\partial z_k})=1~|~k=1,\ldots,8\}$.
Applying Theorem \ref{thm2}, we obtain
\begin{equation*}
V_0(z_1^{(0)},\ldots,z_8^{(0)})\equiv i(\vol(\rho)+i\,\cs(\rho))\modulo,
\end{equation*}
and numerical calculation verifies it by
\begin{equation*}
V_0(z_1^{(0)},\ldots,z_8^{(0)})=
      \left\{\begin{array}{ll}i(2.0299...+0\,i)=i(\vol(4_1)+i\,\cs(4_1))&\text{ if }t=\frac{-1-\sqrt{3} \,i}{2}, \\
                i(-2.0299...+0\,i)=i(-\vol(4_1)+i\,\cs(4_1))&\text{ if }t=\frac{-1+\sqrt{3}\,i}{2}. \end{array}\right.
\end{equation*}

\subsection{Trefoil knot $3_1$}\label{sec42}

\begin{figure}[h]\centering
\includegraphics[scale=0.5]{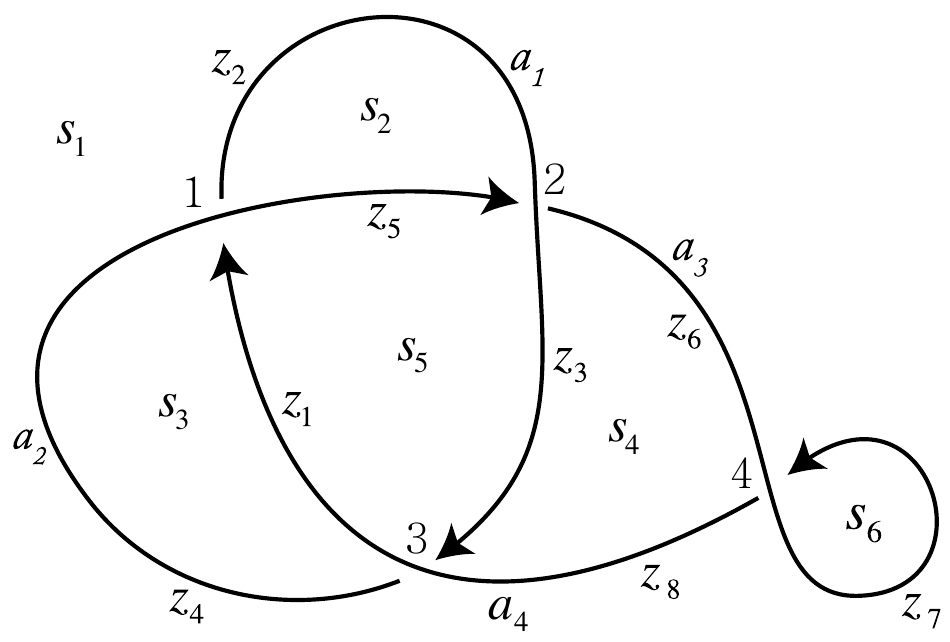}
\caption{Trefoil knot $3_1$ with parameters}\label{example2}
\end{figure}

For the trefoil knot diagram in Figure \ref{example2}, 
let the elements of $\mathcal{P}$ corresponding to the arcs be 
$$a_1=\left(\begin{array}{cc}1 &0\end{array}\right),~a_2=\left(\begin{array}{cc}0 &1\end{array}\right),~
a_3=a_4=\left(\begin{array}{cc}-1 &1\end{array}\right).$$
(Note that crossing 4 is degenerate.) These elements satisfy
\begin{equation}\label{minus2}
a_4*a_2=-a_1,~a_2*a_1=a_3,~a_1*a_4=a_2,~a_4*a_3=a_3.
\end{equation}
where the identities are expressed in $\mathbb{C}^2\backslash\{0\}$, not in $\mathcal{P}=(\mathbb{C}^2\backslash\{0\})/\pm$.
Let $\rho:\pi_1(3_1)\rightarrow{\rm PSL}(2,\mathbb{C})$
be the boundary-parabolic representation determined by $a_1,a_2,a_3, a_4$.
We define the shadow-coloring of Figure \ref{example1} induced by $\rho$
by letting
\begin{eqnarray*}
s_1=\left(\begin{array}{cc}-1 &2\end{array}\right),~s_2=\left(\begin{array}{cc}1 &2\end{array}\right),~
s_3=\left(\begin{array}{cc}-1 &3\end{array}\right),~s_4=\left(\begin{array}{cc}0 &1\end{array}\right),\\
s_5=\left(\begin{array}{cc}1 &1\end{array}\right),s_6=\left(\begin{array}{cc}-2 &3\end{array}\right),
~p=\left(\begin{array}{cc}2 &1\end{array}\right).
\end{eqnarray*}
Direct calculation shows this shadow-coloring satisfies (\ref{regcon}) in Lemma \ref{lem1}. 
(However, this does not satisfy (\ref{exi}).) 

All values of $h(a_1),h(a_2),h(a_3)=h(a_4)$ are different, 
hence the potential function 
$V$ of Figure \ref{example2} is 
\begin{eqnarray*}
V(z_1,\ldots,z_8,w_6^4,w_7^4)&=&\li(\frac{z_2}{z_5})-\li(\frac{z_2}{z_4})+\li(\frac{z_1}{z_4})-\li(\frac{z_1}{z_5})\\
&+&\li(\frac{z_6}{z_3})-\li(\frac{z_6}{z_2})+\li(\frac{z_5}{z_2})-\li(\frac{z_5}{z_3})\\
&+&\li(\frac{z_4}{z_1})-\li(\frac{z_4}{z_8})+\li(\frac{z_3}{z_8})-\li(\frac{z_3}{z_1})\\
&-&\log w_6^4\log z_6+\log{w_6^4}\log z_8,
\end{eqnarray*}
and the simplified potential function $\widehat{V}$ defined in the proof of Theorem \ref{thm2} is
\begin{eqnarray*}
\widehat{V}(z_1,\ldots,z_6)&=&\li(\frac{z_2}{z_5})-\li(\frac{z_2}{z_4})+\li(\frac{z_1}{z_4})-\li(\frac{z_1}{z_5})\\
&+&\li(\frac{z_6}{z_3})-\li(\frac{z_6}{z_2})+\li(\frac{z_5}{z_2})-\li(\frac{z_5}{z_3})\\
&+&\li(\frac{z_4}{z_1})-\li(\frac{z_4}{z_6})+\li(\frac{z_3}{z_6})-\li(\frac{z_3}{z_1}).
\end{eqnarray*}
Applying Theorem \ref{thm1}, we obtain
\begin{eqnarray*}
z_1^{(0)}=\frac{\det(a_4,p)}{\det(a_4,s_5)}=\frac{3}{2},~z_2^{(0)}=\frac{\det(a_1,p)}{\det(a_1,s_2)}=\frac{1}{2},
~z_3^{(0)}=\frac{\det(a_1,p)}{\det(a_1,s_5)}=1,\\
z_4^{(0)}=\frac{\det(a_2,p)}{\det(a_2,s_3)}=-2,,~z_5^{(0)}=\frac{\det(a_2,p)}{\det(a_2,s_5)}=2,\\
z_6^{(0)}=z_7^{(0)}=z_8^{(0)}=\frac{\det(a_3,p)}{\det(a_3,s_4)}=3,\\
(w_6^4)^{(0)}=\frac{\det(s_1,p)}{\det(s_4,p)}=\frac{5}{2},~
(w_7^4)^{(0)}=\frac{\det(s_1,p)}{\det(s_6,p)}=\frac{5}{8}.
\end{eqnarray*}
Note that $(z_1^{(0)},\ldots,z_8^{(0)},(w_6^4)^{(0)},(w_7^4)^{(0)})$ and $(z_1^{(0)},\ldots,z_6^{(0)})$ 
are solutions of 
\begin{eqnarray*}
\mathcal{H}=
\left\{\exp(z_k\frac{\partial V}{\partial z_k})=1,~\exp(w_k^j\frac{\partial V}{\partial w_k^j})=1~|~j=4, ~k=1,\ldots,8\right\}\\
\text{and }\widehat{\mathcal{H}}=
\left\{\exp(z_k\frac{\partial \widehat{V}}{\partial z_k})=1~|~k=1,\ldots,6\right\},
\end{eqnarray*}
respectively.
Applying Theorem \ref{thm2}, we obtain
\begin{equation*}
V_0(z_1^{(0)},\ldots,(w_7^4)^{(0)})\equiv\widehat{V}_0(z_1^{(0)},\ldots,z_6^{(0)})\equiv i(\vol(\rho)+i\,\cs(\rho))\modulo,
\end{equation*}
and numerical calculation verifies it by
\begin{equation*}
\widehat{V}_0(z_1^{(0)},\ldots,z_6^{(0)})=i(0+1.6449...i),
\end{equation*}
where $\vol(3_1)=0$ holds trivially and $1.6449...=\frac{\pi^2}{6}$ holds numerically.

\vspace{5mm}
\begin{ack}
  The author appreciates Yuichi Kabaya and Jun Murakami for suggesting this research and having much discussion.
  Ayumu Inoue gave wonderful lectures on his work \cite{Kabaya14} at Seoul National University and it became the framework of Section \ref{sec2} of this article. 
  Many people including Hyuk Kim, Seonhwa Kim, Roland van der Veen, Hitoshi Murakami, Satoshi Nawata,
  Stephan\'{e} Baseilhac heard my talks on the result and gave many suggestions.
  {Also, the author shows special thanks to the anonymous reviewer who suggested the revised proof of Lemma \ref{lem1}.}
  
  The author is supported by Basic Science Research Program through the National Research Foundation of Korea (NRF) funded by the Ministry of Education (NRF-2015R1C1A1A02037540).

\end{ack}

\bibliography{VolConj}
\bibliographystyle{abbrv}

{
\begin{flushleft}
Busan National University of Education\\Republic of Korea

  \vspace{0.4cm}
E-mail: dol0425@bnue.ac.kr\\
\end{flushleft}}
\end{document}